\documentclass{amsart} 
\usepackage{amssymb,amsmath,latexsym,times,tikz,hyperref,mathrsfs}

\hypersetup{colorlinks=true, linkcolor=blue, citecolor=magenta}

\newcommand{\btu}{\bigtriangleup}

\numberwithin{equation}{section}

\theoremstyle{plain}
\newtheorem{thm}{Theorem}[section]
\newtheorem{dfn}[thm]{Definition}
\newtheorem{cor}[thm]{Corollary}

\newtheorem{lemma}[thm]{Lemma}

\newtheorem{sublemma}{}[thm]

\theoremstyle{definition}
\newtheorem{example}{Example}

\newtheorem*{ex:bpm2}{Example \ref{ex:booleanp} (continued)} 
\newtheorem*{ex:aff2}{Example \ref{ex:affine} (continued)} 
\newtheorem*{ex:ppl2}{Example \ref{ex:projpl} (continued)} 
\newtheorem*{ex:matr2}{Example \ref{ex:matroid} (continued)} 

\newcommand{\del}{\backslash}
\DeclareMathOperator{\cl}{cl}

\title[Decomposable polymatroids]{Decomposable polymatroids and
  connections with graph coloring} 
\author[J.~Bonin]{Joseph E.~Bonin} \address[J.~Bonin]
{Department of Mathematics\\ The George Washington University\\
  Washington, D.C. 20052, USA} \email[J.~Bonin] {jbonin@gwu.edu}
\author[C.~Chun] {Carolyn Chun} \address[C.~Chun]
{Department of Mathematics\\
  United States Naval Academy\\
  Annapolis, MD, 21402, USA} \email[C.~Chun] {chun@usna.edu}
\date{\today}

\begin{document}

\begin{abstract}
  We introduce ideas that complement the many known connections
  between polymatroids and graph coloring.  Given a hypergraph that
  satisfies certain conditions, we construct polymatroids, given as
  rank functions, that can be written as sums of rank functions of
  matroids, and for which the minimum number of matroids required in
  such sums is the chromatic number of the line graph of the
  hypergraph.  This result motivates introducing chromatic numbers and
  chromatic polynomials for polymatroids.  We show that the chromatic
  polynomial of any $2$-polymatroid is a rational multiple of the
  chromatic polynomial of some graph.  We also find the excluded
  minors for the minor-closed class of polymatroids that can be
  written as sums of rank functions of matroids that form a chain of
  quotients.
\end{abstract}

\maketitle

\section{Introduction}\label{sec:intro}

The polymatroids that are of interest in this paper are often called
\emph{integer polymatroids} or \emph{discrete polymatroids} to
distinguish them from more general polymatroids.
Thus, here, a
\emph{polymatroid} on a finite set $E$ is a function
$\rho: 2^E\to \mathbb{Z}$ that has the following properties:
\begin{enumerate}
\item $\rho$ is \emph{normalized}, that is, $\rho(\emptyset) = 0$,
\item $\rho$ is \emph{non-decreasing}, that is, if
  $A\subseteq B\subseteq E$, then $\rho(A)\leq \rho(B)$, and
\item $\rho$ is \emph{submodular}, that is,
  $\rho(A\cup B)+\rho(A\cap B)\leq \rho(A) + \rho(B)$ for all
  $A, B\subseteq E$.
\end{enumerate}
Herzog and Hibi \cite{HerzogHibi} give some equivalent formulations of
this notion.

For a positive integer $k$, a \emph{$k$-polymatroid} is a polymatroid
$\rho$ on $E$ for which $\rho(e)\leq k$ for all $e\in E$, or,
equivalently, $\rho(A)\leq k|A|$ for all $A\subseteq E$.  Thus,
matroids are $1$-polymatroids.  We let $\mathcal{P}_k$ denote the
class of $k$-polymatroids.  If $M_1,M_2,\ldots,M_k$ are matroids on
$E$, then the function $\rho$ on $2^E$ given by
$$\rho(X) = r_{M_1}(X)+ r_{M_2}(X)+\cdots+ r_{M_k}(X),$$ for
$X\subseteq E$, is a $k$-polymatroid on $E$.  We write this as
$\rho = r_{M_1}+ r_{M_2}+\cdots+ r_{M_k}$ for brevity.
We call the multiset
$\{M_1,M_2,\ldots,M_k\}$ a $k$-\emph{decomposition} of $\rho$, and we
say that $\rho$ is \emph{$k$-decomposable}.  We let $\mathcal{D}_k$ be
the class of all $k$-decomposable polymatroids.  Murty and Simon
\cite{murtysimon} raised the question of which polymatroids are
decomposable, and they showed, by example, that not all
$2$-polymatroids are decomposable.

For a polymatroid $\rho$ on $E$ and for $A\subseteq E$, the
\emph{deletion $\rho_{\del A}$} and \emph{contraction $\rho_{/A}$},
both on $E-A$, are defined by $\rho_{\del A}(X) = \rho(X)$ and
$\rho_{/A}(X) = \rho(X\cup A)-\rho(A)$ for all $X\subseteq E-A$. The
\emph{minors} of $\rho$ are the polymatroids of the form
$(\rho_{\del A})_{/B}$ (equivalently, $(\rho_{/B})_{\del A}$) for
disjoint subsets $A$ and $B$ of $E$.  It is easy to check that if
$\rho = r_{M_1}+ r_{M_2}+\cdots+ r_{M_k}$, then
\begin{equation}\label{eq:mindelcon}
  \rho_{\del A} = r_{M_1\del A}+ r_{M_2\del A}+\cdots+ r_{M_k\del A}
  \quad\text{and}\quad \rho_{/A} = r_{M_1/A}+ r_{M_2/A}+\cdots+
  r_{M_k/A},
\end{equation}
so each class $\mathcal{D}_k$ is minor-closed (i.e., all minors of
polymatroids in the class are in the class), as is the union of these
classes, which we denote by $\mathcal{D}$.  Thus, their excluded
minors (i.e., the minor-minimal polymatroids that are not in the
class) are of interest.  (The class $\mathcal{P}_k$ of
$k$-polymatroids is also minor-closed; it has one excluded minor for
each integer $t>k$, namely, a polymatroid consisting of a single
element of rank $t$.)

In our first main result, Theorem \ref{thm:chiconnection2h}, given a
hypergraph $H$ that satisfies certain mild conditions, we construct
polymatroids for which the ordered $k$-decompositions of each of these
polymatroids correspond bijectively to the $k$-colorings of the line
graph of $H$.
One
corollary is that if the line graph is critical (each single-edge
deletion has smaller chromatic number than the graph), then the
polymatroids are excluded minors for some of the classes
$\mathcal{D}_k$.  We also show that certain truncations of some of
these polymatroids are not decomposable.  We give examples to
illustrate the abundance of excluded minors for the classes
$\mathcal{D}$ and $\mathcal{D}_k$ that result.  This casts doubt on
the feasibility of characterizing $\mathcal{D}$ and $\mathcal{D}_k$ by
excluded minors, but it motivates the next definition.

\begin{dfn}\label{def:chromatic}
  The \emph{chromatic polynomial}, $\chi(\rho;x)$, of a polymatroid
  $\rho$ is the function that, when $x$ is a positive integer $k$,
  gives the number of $k$-tuples $(M_1,M_2,\ldots,M_k)$ of matroids
  with $\rho = r_{M_1}+ r_{M_2}+\cdots+ r_{M_k}$.  The \emph{chromatic
    number} $\chi(\rho)$ of $\rho$ is the least positive integer $k$
  such that $\chi(\rho;k)>0$; we set $\chi(\rho)=\infty$ if there is
  no such $k$.
\end{dfn}

By Theorem \ref{thm:chiconnection2h}, for certain hypergraphs
$H$ and any of the polymatroids $\rho$ that we construct from $H$, we
have $\chi(\rho)=\chi(G_H)$ and $\chi(\rho;k)=\chi(G_H;k)$ where $G_H$
is the line graph of $H$.  We give some basic properties of
$\chi(\rho)$ and $\chi(\rho;x)$ in Section \ref{sec:chi}, including
showing that $\chi(\rho;x)$ is a polynomial. 

Our second main result, Theorem \ref{thm:graphbehind2poly}, supports
the observation that $2$-polymatroids have more in common with graphs
than do general polymatroids.  We show that for any $2$-polymatroid
$\rho$, there is a graph $G$ and a rational number $s$ for which
$\chi(\rho;x)=s\cdot \chi(G;x)$. The papers of Lemos
\cite{manoel1,manoel} and Lemos and Mota \cite{LemosMota} provide key
ingredients in the proof.

In Section \ref{sec:duality}, we show how, from a $k$-decomposition of
an $i$-polymatroid $\rho$, where $k\geq i$, we get a $k$-decomposition
of the $i$-dual $\rho^*$ of $\rho$.  We apply this result to excluded
minors and we identify conditions under which we have
$\chi(\rho;x) =\chi(\rho^*;x)$.

In Section \ref{sec:quo}, we explore decompositions of polymatroids
using matroids that form a chain under the quotient
order.
In Theorem \ref{thm:quotkpolyExMin} and
Corollary \ref{cor:quotientexmin}, we give the excluded minors for
minor-closed classes of these polymatroids.

Our notation follows Oxley \cite{oxley}, with two additions: we let
$U_{r,E}$ denote the rank-$r$ uniform matroid on the set $E$ (so its
bases are all $r$-subsets of $E$), and we let $[n]$ denote the set
$\{1,2,\ldots,n\}$.

\section{A construction of polymatroids from
  hypergraphs}\label{sec:genboolh}

A \emph{hypergraph} is an ordered pair $H=(E,\mathcal{E})$ where $E$
is a finite set and $\mathcal{E}$ is a set $\{X_i\,:i\in[n]\}$ of
nonempty subsets of $E$; more broadly, $H$ is a
\emph{multi-hypergraph} if $\mathcal{E}$ is a finite multiset of
subsets of $E$.  The elements of $E$ are the \emph{vertices} of $H$,
and the sets $X_i$ in $\mathcal{E}$ are the \emph{hyperedges} of $H$.
The \emph{line graph} $G_H$ of $H$ has $[n]$ as its vertex set, and
$ij$ is an edge precisely when $i\ne j$ and
$X_i\cap X_j\ne \emptyset$.  Given $H$, define $w:2^E\to \mathbb{Z}$
by letting $w(e)$, for $e\in E$, be the number of sets in
$\mathcal{E}$ that contain $e$, and for $A\subseteq E$, setting
$$w(A) = \sum_{e\in A}w(e)=\sum_{i=1}^n|A\cap X_i|.$$

Some of the results below are limited to hypergraphs that have one or
more of the following special properties:
\begin{itemize}
\item[(H1)] each vertex is in at least two hyperedges, so $w(e)\geq 2$
  for all $e\in E$,
\item[(H2)] if $i,j\in[n]$ with $i\ne j$, then $|X_i\cap X_j|\leq 1$,
\item[(H3)] no three hyperedges have the form
  $\{a,b\},\{a,c\},\{b,c\}$ for $a,b,c\in E$. 
\end{itemize}  

Given a multi-hypergraph $H=(E,\{X_i\,:i\in[n]\})$, for each
$i\in[n]$, fix an integer $t_i$ where $0\leq t_i\leq |X_i|$.
When $H$
is a hypergraph, we sometimes require that
\begin{itemize}
\item[(T)] for each $i\in[n]$, either $t_i=1$ or
  $\min\{w(e)\,:\,e\in X_i\}\leq t_i<|X_i|$.
\end{itemize}
Let the polymatroid $\rho$ be given by
\begin{equation}\label{eq:genbpmh}
  \rho = r_{M_1}+ r_{M_2}+ \cdots+ r_{M_n} \qquad\text{where}\qquad
  M_i=U_{t_i,X_i}\oplus U_{0,E-X_i}. 
\end{equation}
Thus, for $A\subseteq E$,
\begin{equation}\label{eq:genbpmhrk}
  \rho(A) = \sum_{i=1}^n\min\{|A\cap X_i|,\,t_i\}.
\end{equation}
If $t_i\geq 1$ for all $i\in [n]$, then $\rho(e)=w(e)$ for all
$e\in E$.

When $t_i\geq 1$ for all $i\in [n]$ and property (H2) holds, two
useful observations follow.  First, for each hyperedge $X_h$ and set
$A\subseteq X_h$,
\begin{equation}\label{eq:ranksgenh}
  \rho(A) = \min\{w(A),\, w(A)-|A|+t_h\}.
\end{equation}
Second, if $e$ and $f$ are distinct elements of $E$, then
$\rho(\{e,f\})=w(\{e,f\})-\delta$ where $\delta=1$ if there is a
(unique) hyperedge $X_i$ with $e,f\in X_i$ and $t_i=1$, and otherwise
$\delta=0$.

Before turning to our first main result, Theorem
\ref{thm:chiconnection2h}, we make connections with earlier work, give
examples, and develop the idea of incidence sets.

For a multi-hypergraph $H=(E,\mathcal{E})$, let all $t_i$ be $1$ and
set $\psi(e) = \{i\,:\,e\in X_i\}$ for each $e\in E$, so
$\rho(A) = \bigl|\cup_{e\in A}\psi(e)\bigr|$ for all $A\subseteq E$.
These are the \emph{covering hypermatroids} that Helgason
\cite{helgason} introduced, which are now called \emph{Boolean
  polymatroids} or \emph{transversal polymatroids}. Mat\'u\v s
\cite{Matus} found the excluded minors for Boolean polymatroids.

\begin{example}\label{ex:booleanp}
  For a graph $G=(V,E)$ with $V=\{v_i\,:\,i\in[n]\}$ and with no
  isolated vertices, let $X_i$ be the set of edges of $G$ that are
  incident with vertex $v_i$.  If $G$ is simple (i.e., has no loops
  and no parallel edges) and no component is a $3$-cycle, then the
  hypergraph $H=(E,\{X_i\,:\,i\in[n]\})$ satisfies properties
  (H1)--(H3).  The line graph $G_H$ is isomorphic to $G$ and $w(e)=2$
  for all $e\in E$.  Property (T) holds if either $t_i=1$ or
  $0< t_i<|X_i|$ for each $i\in[n]$.  The most-studied case is the
  \emph{Boolean $2$-polymatroid} $\rho_G$ of $G$, which is when
  $t_i=1$ for all $i\in[n]$.  In that case, for $X\subseteq E$, we
  have $\rho_G(X)=|V(X)|$ where $V(X)$ is the set of vertices that are
  incident with at least one edge in $X$.  Thus, $X$ is a matching if
  and only if $\rho_G(X)=2|X|$.  Oxley and Whittle \cite{exminbpm}
  found the excluded minors for Boolean $2$-polymatroids.
\end{example}

\begin{example}\label{ex:affine}
  Let $M$ be a finite affine plane, that is, a rank-$3$ simple matroid
  in which, given any point $x$ and line $L$ with $x\not\in L$, there
  is exactly one line $L'$ with $x\in L'$ and
  $L\cap L'=\emptyset$.
  Let $E$
  be the set of lines of $M$.  For each point $a$ in $E(M)$, let $X_a$
  be the set of all lines of $M$ that contain $a$, and let
  $\mathcal{E}$ be $\{X_a\,:\,a\in E(M)\}$.  The hypergraph
  $H_M= (E,\mathcal{E})$ satisfies properties (H1)--(H3).  For
  distinct points $a$ and $b$ in $E(M)$, the line $\cl_M(\{a,b\})$ is
  in $X_a\cap X_b$, so the line graph of $H_M$ is complete.  The
  \emph{order} of a finite affine plane is the number of points on
  each line.  If $M$ has order $q$, then $w(L)=q$ for all $L\in E$.
  Basic counting gives $|E| = q^2+q$ as well as $|\mathcal{E}|=q^2$
  and $|X_a|=q+1$ for all $a\in E(M)$.  Condition (T) holds if and
  only if $t_a\in\{1,q\}$ for all $a\in E(M)$.

  In place of an affine plane, we can use any rank-$3$ simple matroid
  other than $U_{3,3}$ (if we want property (H3) to hold).  The choice
  of the matroid influences the options for the integers $t_i$.  The
  line graph is always complete.
\end{example}

\begin{example}\label{ex:projpl}
  Let $M$ be a projective plane of order $q$ (i.e., each line has
  $q+1$ points).  Let $E$ be its set of points and $\mathcal{E}$ its
  set of lines.  The line graph of the hypergraph $(E,\mathcal{E})$ is
  the complete graph on $q^2+q+1$ vertices.  Properties (H1)--(H3)
  hold.  Only setting all $t_i=1$ satisfies condition (T) since
  $|X_i|=q+1=w(e)$ for all $X_i\in \mathcal{E}$ and
  $e\in E$.
  
  In place of a projective plane, we can use any rank-$3$ simple
  matroid, provided that (if property (H3) is to hold) no triple of
  trivial lines has the form $\{a,b\}$, $\{a,c\}$, $\{b,c\}$.  The
  choice of $M$ influences the options for the integers $t_i$.  The
  line graph is complete if and only if $M$ is modular, that is, $M$
  is a projective plane or $U_{1,1}\oplus U_{2,n}$ for some
  $n\geq 3$.
\end{example}

\begin{dfn}\label{dfn:incidencesets}
  An \emph{incidence set} of a polymatroid $\rho$ on $E$ is a subset
  $X$ of $E$ with $|X|\geq 2$ for which, for all $Y\subseteq X$ with
  $|Y|\in[3]$,
  \begin{equation}\label{eq:definc}
    \rho(Y) =1+\sum_{e\in Y}\bigl(\rho(e)-1\bigr).
  \end{equation}
\end{dfn}

For example, if $t_h=1$ and property (H2) holds, then the hyperedge
$X_h$ is an incidence set by equation (\ref{eq:ranksgenh}).  In
particular, in the Boolean polymatroid of a simple graph, the set of
edges that are incident with a given vertex of degree at least two is
an incidence set.  If $\rho(e)=0$, then equation (\ref{eq:definc})
fails for any set $\{e,f\}$, so all elements in incidence sets have
positive rank.

\begin{lemma}\label{lem:forceparallel}
  Let $D=\{M_i\,:\,i\in[k]\}$ be a decomposition of a polymatroid
  $\rho$ on $E$.
  \begin{enumerate}
  \item For any incidence set $X$ of $\rho$, there is exactly one
    $i\in[k]$ for which all elements in $X$ are parallel in $M_i$.  No
    two elements of $X$ are parallel in any other matroid in $D$.  If
    equation \emph{(\ref{eq:definc})} also holds for $Y=X$,
    then the only subsets of $X$ that are circuits of
    any other matroid in $D$ are singletons (i.e., loops).
  \item Assume that $X$ and $Y$ are incidence sets and that there are
    elements $a\in X$ and $b\in Y$ with
    $\rho(\{a,b\})=\rho(a)+\rho(b)$.  If $X\cap Y\ne \emptyset$, then
    $|X\cap Y|=1$, and the matroid in $D$ in which the elements of $Y$
    are parallel differs from that for $X$.
\end{enumerate}
\end{lemma}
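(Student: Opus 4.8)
The plan is to phrase everything through the quantities $\delta_i(A):=\sum_{e\in A}r_{M_i}(e)-r_{M_i}(A)$, for $A\subseteq E$ and $i\in[k]$. Since each $r_{M_i}$ is submodular with $r_{M_i}(e)\in\{0,1\}$, one has $\delta_i(A)\ge 0$; indeed $\delta_i(A)$ equals the nullity in $M_i$ of the set of non-loops of $A$, so $\delta_i(A)=0$ iff the non-loops of $A$ are independent in $M_i$, equivalently iff every circuit of $M_i$ contained in $A$ is a single loop. Summing $\rho=r_{M_1}+\cdots+r_{M_k}$ over $A$ gives the master identity
$$\sum_{i=1}^{k}\delta_i(A)=\sum_{e\in A}\rho(e)-\rho(A).$$
A direct check also gives $\delta_i(\{e,f\})\in\{0,1\}$, with $\delta_i(\{e,f\})=1$ precisely when $e$ and $f$ are parallel in $M_i$. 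Now equation~(\ref{eq:definc}) for a set $Y$ is exactly the statement $\sum_{e\in Y}\rho(e)-\rho(Y)=|Y|-1$, so being an incidence set forces $\sum_i\delta_i(Y)=|Y|-1$ for all $Y\subseteq X$ with $|Y|\le 3$ (and, under the hypothesis of the last sentence of (1), also for $Y=X$); this is rigid because the summands are non-negative integers.

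For part (1), fix a pair $\{e,f\}\subseteq X$. The case $|Y|=2$ gives $\sum_i\delta_i(\{e,f\})=1$, so there is a unique index $\phi(e,f)$ in which $e$ and $f$ are parallel. The crux is to show that $\phi$ is constant on the pairs contained in $X$. For distinct $a,b,c\in X$, the case $|Y|=3$ gives $\sum_i\delta_i(\{a,b,c\})=2$; using the easy inequality $\delta_i(\{a,b,c\})\ge\delta_i(\{a,b\})$ (submodularity), I would argue that if $\phi(a,b)\ne\phi(a,c)$ then $\delta_{\phi(a,b)}(\{a,b,c\})=\delta_{\phi(a,c)}(\{a,b,c\})=1$ and every other summand is $0$; then $\phi(b,c)$ must be one of these two indices, and transitivity of parallelism among non-loops would force that summand to be $2$, a contradiction. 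Hence $\phi(a,b)=\phi(a,c)$; then $a$ is parallel to $b$ and to $c$ in that matroid, so $b$ is parallel to $c$ there too, and all three pairs get the same index. Propagating this over overlapping triples yields a single index $i^*$ with $\phi\equiv i^*$ on all pairs of $X$, which says exactly that $X$ lies in a single parallel class of $M_{i^*}$ and of no other matroid in $D$. If equation~(\ref{eq:definc}) also holds for $Y=X$, then $\sum_i\delta_i(X)=|X|-1$; but since $X$ lies in one parallel class of $M_{i^*}$ we have $\delta_{i^*}(X)=|X|-1$, so $\delta_i(X)=0$ for every other $i$, i.e.\ every circuit of such an $M_i$ contained in $X$ is a single loop.

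For part (2), the hypothesis $\rho(\{a,b\})=\rho(a)+\rho(b)$ says $\sum_i\delta_i(\{a,b\})=0$, so $a$ and $b$ are parallel in no $M_i$ (and $a\ne b$, since $a$ lies in an incidence set and so $\rho(a)>0$). Let $i_X$ and $i_Y$ be the indices supplied by part (1), so the elements of $X$ lie in one parallel class of $M_{i_X}$ and those of $Y$ in one parallel class of $M_{i_Y}$. If $i_X=i_Y$, a common element $c\in X\cap Y$ forces these two parallel classes of $M_{i_X}$ to coincide; then $a$ and $b$ both lie in that class and so are parallel in $M_{i_X}$, a contradiction, so $i_X\ne i_Y$. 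Finally, if $|X\cap Y|\ge 2$, pick distinct $c,d\in X\cap Y$; the pair $\{c,d\}$ lies in the incidence set $X$ and in the incidence set $Y$, so the unique index making $c,d$ parallel must equal both $i_X$ and $i_Y$, contradicting $i_X\ne i_Y$. Hence $|X\cap Y|=1$.

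The step I expect to be the main obstacle is the constancy of $\phi$ on an incidence set: it is the only place where the three-element clause of the definition of an incidence set is genuinely used, and it needs the exact value $\sum_i\delta_i(\{a,b,c\})=2$ together with transitivity of parallelism, not just inequalities. Everything else is bookkeeping with the master identity and the non-negative integers $\delta_i$.
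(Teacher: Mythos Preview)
Your proof is correct and follows essentially the same route as the paper's. The paper argues directly that each pair $\{e,f\}\subseteq X$ is parallel in exactly one $M_i$ because $\rho(\{e,f\})=\rho(e)+\rho(f)-1$, then rules out three distinct indices for a triple via the inequality $\rho(\{e,f,g\})\le\rho(e)+\rho(f)+\rho(g)-3$, and finishes part~(2) exactly as you do; your $\delta_i$ notation is just a clean packaging of the same counting, with your monotonicity $\delta_i(\{a,b,c\})\ge\delta_i(\{a,b\})$ playing the role of the paper's direct rank estimate.
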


\begin{proof}
  For any $e,f \in X$, there are exactly $\rho(e)$ matroids $M_i$ in
  $D$ with $r_{M_i}(e)=1$, and exactly $\rho(f)$ matroids $M_j$ in $D$
  with $r_{M_j}(f)=1$, yet $\rho(\{e,f\}) = \rho(e)+\rho(f)-1$, so $e$
  and $f$ must be parallel in exactly one matroid in $D$.  If
  $|X|\geq 3$, then for any third element $g\in X$, the same
  conclusion applies to $e$ and $g$, and to $f$ and $g$.  If these
  pairs were parallel in three different matroids, then
  $\rho(\{e,f,g\})\leq \rho(e)+\rho(f)+\rho(g) -3$, contrary to
  equation (\ref{eq:definc}).  Thus, two of the pairs are parallel in
  the same matroid, and so all three are parallel in that matroid.
  This applies to any triple in $X$, so the first two assertions in
  part (1) follow.  Similar ideas yield the last assertion in part
  (1).

  For part (2), since $\rho(\{a,b\})=\rho(a)+\rho(b)$, these elements
  are not parallel in any matroid in $D$.  Let $e\in X\cap Y$.  The
  parallel class that $X$ is in contains $a$ and $e$, and the one that
  $Y$ is in contains $b$ and $e$, so, with $e$ in common, these
  parallel classes must be in different matroids.  If
  $f\in (X\cap Y)-e$, then $f$ would also be in both of those parallel
  classes, but this contradicts part (1) since $e$ and $f$ are
  parallel in just one matroid in $D$.
\end{proof}

\begin{example}\label{ex:Vamos1}
  We define counterparts, for $k$-polymatroids, of the V\'amos
  matroid. 
    Let $E$ be $\{a,b,c,d\}$.  For integers $k\geq 2$ and
  $s$ with $3k-2\leq s\leq 4k-4$, let $\rho_{k,s}$ be given by
  $$\rho_{k,s}(X) =
  \begin{cases}
    k\,|X|, & \text{if } |X|\leq 1  \text{ or } X=\{a,d\},\\
    2k-1, & \text{if } |X|=2 \text{ and } X\ne\{a,d\},\\
    3k-2, & \text{if } |X|=3,\\
    s, & \text{if } X=E.
  \end{cases}$$ Figure \ref{fig:vamos} shows $\rho_{2,4}$.  It is easy
  to check that, if $\rho$ is a $j$-polymatroid on $E$ and $k\geq j$,
  then $\rho^k$, defined by $\rho^k(X)=\rho(X)+(k-j)|X|$ for
  $X\subseteq E$, is a $k$-polymatroid. Note that
  $\rho_{k,4k-4}=\rho_{2,4}^k$.  This operation appears again in
  Section \ref{sec:duality}.  Also, $\rho_{k,s}$ is a truncation of
  $\rho_{k,4k-4}$.  Truncations are discussed at greater length at the
  end of this section.  The sets $X = \{a,b,c\}$ and $Y = \{b,c,d\}$
  are incidents sets of $\rho_{k,s}$, yet $|X\cap Y|=2$ and
  $\rho_{k,s}(\{a,d\}) = \rho_{k,s}(a) +\rho_{k,s}(d)$, so
  $\rho_{k,s}$ is not decomposable by part (2) of Lemma
  \ref{lem:forceparallel}.  It is easy to check that $\rho_{k,s}$ is
  an excluded minor for $\mathcal{D}$ and for $\mathcal{D}_t$ for all
  $t\geq k$.  Variations on this example yield more excluded minors
  for decomposability.
\end{example}

\begin{figure}
  \centering
\begin{tikzpicture}[scale=1]
\draw[thick,black!20](0,1)--(1,0.1)--(2.5,0.1)--(3,1);
\draw[thick,black!20](0.7,1.4)--(1,0.1)--(2.5,0.1)--(3.7,1.4);
\draw[very thick](1,0.1)--(2.5,0.1);
\draw[line width = 3.5pt,white](0.6,1)--(1.2,1);
\draw[thick,black!20](0,1)--(3,1)--(3.7,1.4)--(0.7,1.4)--(0,1);
\draw[very thick](0,1)--(3,1);
\draw[very thick](3.7,1.4)--(0.7,1.4);
\draw[line width = 3.5pt,white](2.85,1.3)--(2.85,1.5);
\draw[thick,black!20](0,1)--(1,2.3)--(2.5,2.3)--(3,1);
\draw[thick,black!20](0.7,1.4)--(1,2.3)--(2.5,2.3)--(3.7,1.4);
\draw[very thick](1,2.3)--(2.5,2.3);

\node at (1.8,2.45) {\footnotesize $a$};
\node at (1.8,0.8) {\footnotesize $b$};
\node at (1.8,1.55) {\footnotesize $c$};
\node at (1.8,-0.1) {\footnotesize $d$};
\end{tikzpicture}
\caption{A $2$-polymatroid counterpart of the V\'amos matroid.  }
  \label{fig:vamos}
\end{figure}
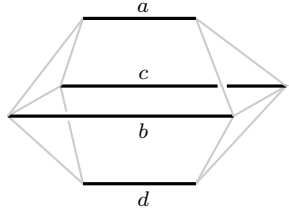

We turn to our first main result.  For a polymatroid $\rho$ on $E$ and
a positive integer $k$, let
$$\Delta_\rho^k
=\{(N_1,N_2,\ldots,N_k)\,:\, N_i \text{ is a matroid on } E \text{ and
} \rho = r_{N_1}+ r_{N_2}+ \cdots+ r_{N_k}\}.$$ Thus,
$\chi(\rho;k) =|\Delta_\rho^k|$.

\begin{thm}\label{thm:chiconnection2h}
  Assume that the hypergraph $H=(E,\{X_i\,:\,i\in[n]\})$ and integers
  $t_i$ satisfy properties \emph{(H2)}, \emph{(H3)}, and
  \emph{(T)}.
  Let $\rho$ be given by equation \emph{(\ref{eq:genbpmh})}.  For each
  positive integer $k$, there is a bijection from the set of
  $k$-colorings $c:[n] \to[k]$ of the line graph $G_H$ onto
  $\Delta_\rho^k$.  Thus, the least $k$ with $\rho\in\mathcal{D}_k$ is
  $k=\chi(G_H)$, and $ |\Delta_\rho^k|= \chi(G_H;k)$ for all positive
  integers $k$.
\end{thm}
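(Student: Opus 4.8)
The plan is to exhibit the bijection together with an inverse. To a proper $k$-colouring $c\colon[n]\to[k]$ of $G_H$, assign $\Phi(c)=(N_1,\dots,N_k)$ with
\[
  N_a \;=\; \Bigl(\bigoplus_{i\,:\,c(i)=a}U_{t_i,X_i}\Bigr)\oplus
  U_{0,\,E-\bigcup_{i:c(i)=a}X_i}.
\]
Since $c$ is proper, any two hyperedges of a common colour are non-adjacent in $G_H$, hence disjoint, so $N_a$ is a well-defined matroid on $E$; summing $r_{N_a}(A)=\sum_{i:c(i)=a}\min\{|A\cap X_i|,t_i\}$ over $a$ and invoking equation (\ref{eq:genbpmhrk}) gives $\rho=r_{N_1}+\dots+r_{N_k}$, so $\Phi(c)\in\Delta_\rho^k$. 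The theorem is the assertion that $\Phi$ is a bijection; once that is established, the equivalences $\rho\in\mathcal D_k\iff\chi(G_H;k)>0\iff k\ge\chi(G_H)$ and the equality $|\Delta_\rho^k|=\chi(G_H;k)$ are immediate.

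For injectivity I would recover $c$ from $(N_1,\dots,N_k)=\Phi(c)$ using two bookkeeping facts valid in \emph{every} $(N_1,\dots,N_k)\in\Delta_\rho^k$: since $\rho(e)=w(e)$, exactly $w(e)$ of the matroids have $e$ as a non-loop; and comparing $\rho(\{e,f\})=\sum_a r_{N_a}(\{e,f\})$ with the formula for $\rho(\{e,f\})$ recorded just after equation (\ref{eq:ranksgenh}) shows that the number of matroids in which $e$ and $f$ are parallel is $1$ when the (by (H2) unique) hyperedge containing both $e$ and $f$ has $t$-value $1$, and is $0$ otherwise. Now take a hyperedge $X_h$ with $|X_h|\ge 2$. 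If $t_h=1$, then by equation (\ref{eq:ranksgenh}) the set $X_h$ is an incidence set, and equation (\ref{eq:definc}) holds for $Y=X_h$ as well, so Lemma \ref{lem:forceparallel}(1) yields a unique colour, called $c(h)$, such that all of $X_h$ is parallel in $N_{c(h)}$ while no subset of $X_h$ of size at least $2$ is a circuit of any other $N_a$; thus $N_{c(h)}$ restricts to $U_{1,X_h}$ on $X_h$. If $t_h\ge 2$, so $|X_h|>t_h$ by (T), then for $A\subseteq X_h$ with $|A|\le t_h$ equation (\ref{eq:ranksgenh}) forces $\rho(A)=w(A)=\sum_{e\in A}\rho(e)$, so the non-loops of $A$ are independent in every $N_a$; combining this with $\rho(A)=w(A)-|A|+t_h$ for $t_h\le|A|\le|X_h|$, with the monotonicity over subsets of the nullity $|A\cap(\text{non-loops of }N_a)|-r_{N_a}(A)$, and --- this is where (T) does real work --- with the hypothesis $\min\{w(e):e\in X_h\}\le t_h$ and the rank identities for subsets straddling $X_h$ and a neighbouring hyperedge, one shows that the nullity $|X_h|-t_h$ of $X_h$ is concentrated in a single colour $c(h)$, with $N_{c(h)}$ restricting to exactly $U_{t_h,X_h}$ on $X_h$ and every other $N_a$ restricting on $X_h$ to a free matroid together with loops. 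Finally there is at most one singleton hyperedge $X_h=\{e\}$; for it (T) forces $t_h=1$, and once $c$ has been defined on the $w(e)-1$ non-singleton hyperedges through $e$ (which receive $w(e)-1$ distinct colours), exactly one further matroid has $e$ as a non-loop, and we let $c(h)$ be its colour. The parallel-count fact together with property (H3) then shows that hyperedges meeting in a common vertex receive distinct colours, so $c$ is a proper $k$-colouring of $G_H$.

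It remains to show $\Phi(c)=(N_1,\dots,N_k)$, which gives both that $\Phi$ is onto and that the map $(N_1,\dots,N_k)\mapsto c$ above inverts it. Using the non-loop count one checks that the non-loops of $N_a$ are exactly $\bigcup_{c(i)=a}X_i$; by the previous paragraph $N_a$ restricts to $U_{t_i,X_i}$ on each such $X_i$; and the parallel-count fact together with (H2) and (H3) rules out circuits of $N_a$ that meet two distinct such hyperedges, so the $X_i$ with $c(i)=a$ lie in distinct components of $N_a$. Hence $N_a$ is the direct-sum matroid appearing in the definition of $\Phi$, and $\Phi$ is a bijection; consequently $|\Delta_\rho^k|=\chi(G_H;k)$, and the least $k$ with $\Delta_\rho^k\ne\emptyset$ is $\chi(G_H)$.

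The main obstacle is the structural content hidden in the middle steps: in an \emph{arbitrary} decomposition $\rho=r_{N_1}+\dots+r_{N_k}$, every $N_a$ must be a direct sum of uniform matroids supported on hyperedges together with loops --- in particular there are no ``collapsed'' decompositions in which elements spread across several hyperedges are forced into a single parallel class or block. For $t_h=1$ this is carried by the incidence-set machinery of Lemma \ref{lem:forceparallel}, but it genuinely needs (H3): without it a triangle of two-element hyperedges $\{a,b\},\{a,c\},\{b,c\}$ would allow $\{a,b,c\}$ to be rank one in one matroid in decompositions that do not arise from $\Phi$, breaking the bijection. For $t_h\ge 2$ the hyperedge $X_h$ is not an incidence set, so one must replace ``parallel class'' by ``$U_{t_h,X_h}$-block'' and extract the required rigidity from the rank function of $\rho$ on subsets combining $X_h$ with elements of other hyperedges, using (H2), (H3), and (T). Treating the lone singleton hyperedge cleanly, and checking that the recovered labelling is a \emph{proper} colouring rather than an arbitrary one, are the remaining points demanding care.
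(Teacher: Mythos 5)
Your map $\Phi$ is the paper's $\phi$, and your overall strategy---recover the colour of each hyperedge from the unique matroid carrying its $U_{t_h,X_h}$-block, verify that the resulting labelling is a proper colouring, then reassemble each $N_a$ as a direct sum---is exactly the strategy of Lemma \ref{lem:colortosum} combined with Theorem \ref{thm:prepfortruncate}. The problem is that the two hardest verifications are delegated to ``one shows.'' For $t_h\ge 2$ you correctly isolate the hypothesis $\min\{w(e):e\in X_h\}\le t_h$ from (T), but the mechanism is absent: the paper fixes $A\subseteq X_h$ with $|A|=t_h+2$ containing an element $f$ with $w(f)\le t_h$, observes that each of the $t_h+1$ sets of size $t_h+1$ containing $f$ is a circuit of some $N_j$ while $f$ is a non-loop of at most $t_h$ of the matroids, so by pigeonhole two of these circuits lie in the same matroid, and circuit elimination then spreads uniformity over $A$ and, by element exchanges, over all of $X_h$. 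Monotonicity of nullity does not by itself concentrate the nullity of $X_h$ in a single colour.

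More seriously, the properness step---``the parallel-count fact together with property (H3) then shows that hyperedges meeting in a common vertex receive distinct colours''---does not go through as stated. Suppose $c(p)=c(q)=i$ with $X_p\cap X_q=\{e\}$ and $t_p=t_q=1$. Applying the parallel-count fact to $a\in X_p-e$ and $b\in X_q-e$ only produces a third hyperedge $X_s\supseteq\{a,b\}$ with $t_s=1$; that is not yet a contradiction, and iterating the fact merely generates further hyperedges. The paper's Sublemma \ref{lem:t_i=1coloring} disposes of this configuration by using (H3) to find a fourth element $f$ in one of the three hyperedges and then computing $\rho$ on the $3$- and $4$-element subsets of $\{e,a,b,f\}$, showing that the circuits forced by those rank values occupy more matroids than the rank sum permits. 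When $t_p$ and $t_q$ are not both $1$ you give no argument at all: there the paper must first apply circuit elimination in $N_i$ to circuits of $N_i|X_p$ and $N_i|X_q$ through $e$ and a counting argument (the quantity $\alpha$) to reduce to a two-element circuit $\{e_p,e_q\}$ lying in a third hyperedge with $t$-value $1$, and then derive a rank contradiction on $A\cup e_q$ for a $t_p$-element subset $A$ of $X_p-e_p$. These computations are where (H2), (H3), and the specific rank values of $\rho$ actually do the work; without them the injectivity and surjectivity of $\Phi$ are asserted rather than proved.
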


This result follows from the next two, which strengthen different
parts of the statement.  The next lemma, which is an immediate
consequence of the definitions, assumes none of properties (H1)--(H3)
and (T).

\begin{lemma}\label{lem:colortosum}
  Let $H=(E,\{X_i\,:\,i\in[n]\})$ be a multi-hypergraph.  Let $\rho$
  be given by equation \emph{(\ref{eq:genbpmh})} where
  $0\leq t_i\leq |X_i|$ for each $i\in[n]$.  Define a map $\phi$ on
  the set of $k$-colorings $c:[n] \to[k]$ of the line graph $G_H$ by
  $\phi(c) = (N_1,N_2,\ldots,N_k)$ where
  \begin{equation}\label{eq:dirsumdecompchih}
    N_i=\Biggl(\bigoplus_{h\,:\,c(h)=i}U_{t_h,X_h}\Biggr)
    \oplus U_{0,Y}
    \quad \text{ where } \quad Y=E-\bigcup_{h\,:\,c(h)=i}X_h.
  \end{equation}
  Then $\phi(c)\in \Delta_\rho^k$.  Thus, $\chi(\rho)\leq\chi(G_H)$.
  If $H$ is a hypergraph and, for all $i\in[n]$, either $t_i=1$ or
  $0<t_i<|X_i|$, then $\phi$ is injective.
\end{lemma}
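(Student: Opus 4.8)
The plan is to settle the three claims in order, with essentially all of the work going into injectivity. For the claim $\phi(c)\in\Delta_\rho^k$, the only point needing care is that each $N_i$ in~(\ref{eq:dirsumdecompchih}) really is a matroid on $E$, i.e., that the hyperedges $X_h$ with $c(h)=i$, together with $Y$, partition $E$. This is exactly where ``$c$ is a proper coloring of $G_H$'' enters: if $c(h)=c(h')=i$ with $h\ne h'$, then $hh'$ is not an edge of $G_H$, so $X_h\cap X_{h'}=\emptyset$ by the definition of the line graph, while $Y$ by construction absorbs the remaining vertices. Then $r_{N_i}$ is the sum of the ranks of its direct summands, so $r_{N_i}(A)=\sum_{h\,:\,c(h)=i}\min\{|A\cap X_h|,\,t_h\}$ for all $A\subseteq E$ (the summand $U_{0,Y}$ contributing $0$), and summing over $i\in[k]$ regroups these terms as $\sum_{h=1}^n\min\{|A\cap X_h|,\,t_h\}$, which equals $\rho(A)$ by~(\ref{eq:genbpmhrk}). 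Hence $\phi(c)\in\Delta_\rho^k$, and $\chi(\rho)\le\chi(G_H)$ follows at once since a proper $\chi(G_H)$-coloring of $G_H$ exists, so $\Delta_\rho^{\chi(G_H)}\ne\emptyset$.

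For injectivity I would reconstruct $c$ from the tuple $\phi(c)=(N_1,\ldots,N_k)$. The engine is a connectivity observation: under the stated hypothesis on the $t_h$, for each hyperedge with $|X_h|\ge 2$ one has $1\le t_h\le |X_h|-1$, so the summand $U_{t_h,X_h}$ is a connected matroid on at least two elements; since every other direct summand of $N_{c(h)}$ in~(\ref{eq:dirsumdecompchih}) has ground set disjoint from $X_h$, the set $X_h$ is \emph{exactly} a connected component of $N_{c(h)}$. Thus the connected components of $N_i$ having at least two elements are precisely the sets $X_h$ with $c(h)=i$ and $|X_h|\ge 2$; because the hyperedges of the genuine hypergraph $H$ are pairwise distinct, for any $h$ with $|X_h|\ge 2$ the set $X_h$ is a connected component of $N_i$ if and only if $i=c(h)$, which recovers $c(h)$ for every such $h$. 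For the singleton hyperedges the hypothesis forces $t_h=1$, so $X_h=\{e\}$ is a coloop of $N_{c(h)}$ but possibly of other $N_i$ as well; here I would instead use that the set $\{i\,:\,r_{N_i}(e)=1\}$ is visible in the tuple and equals $\{c(h)\,:\,e\in X_h\}$, which has $w(e)$ elements because the hyperedges through $e$ pairwise meet and hence form a clique of $G_H$ receiving distinct colors under $c$. Having already determined the colors of the non-singleton hyperedges through $e$, and there being at most one singleton hyperedge $\{e\}$, its color is the single remaining index in $\{i\,:\,r_{N_i}(e)=1\}$. Thus $c$ is completely determined by $\phi(c)$, and $\phi$ is injective.

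The step I expect to need the most care is the connectivity bookkeeping in the second paragraph, and it is worth recording that both parts of the hypothesis are load-bearing there. If a free summand were allowed ($t_h=|X_h|\ge 2$), the elements of $X_h$ would be coloops of $N_{c(h)}$ rather than a detectable component, and injectivity genuinely fails: with $E=\{a,b,c,d\}$ and free hyperedges $\{a,b\},\{c,d\},\{a,c\},\{b,d\}$, the line graph is a $4$-cycle whose two proper $2$-colorings both map under $\phi$ to $(U_{4,E},U_{4,E})$. And if $H$ were merely a multi-hypergraph, a repeated hyperedge could have its two colors interchanged without changing the tuple, so ``$H$ is a hypergraph'' cannot be dropped either.
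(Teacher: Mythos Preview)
Your proof is correct. The paper itself gives no explicit argument for this lemma, calling it ``an immediate consequence of the definitions,'' and your write-up is a faithful unpacking of that claim: the partition of $E$ coming from the proper coloring, the rank computation via~(\ref{eq:genbpmhrk}), and the recovery of $c$ from the tuple. Your injectivity argument---reading off $c(h)$ for $|X_h|\ge 2$ from the non-singleton connected components of the $N_i$, and then recovering $c(h)$ for singleton $X_h=\{e\}$ as the one remaining index in $\{i:r_{N_i}(e)=1\}$ after the non-singleton hyperedges through $e$ have been colored---is exactly the reconstruction the paper itself carries out later, in the proof of Theorem~\ref{thm:prepfortruncate}, when defining a coloring from an arbitrary decomposition. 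So your approach and the paper's are the same; you have simply made explicit here what the paper defers.
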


The heart of the proof of Theorem \ref{thm:chiconnection2h} is showing
that, when properties (H2), (H3), and (T) hold, the image of the map
$\phi$ in Lemma \ref{lem:colortosum} is $\Delta_\rho^k$.  That is part
of the next result, which, for any polymatroid $\sigma$ on $E$ for
which $\sigma(A)=\rho(A)$ for certain sets $A$, produces colorings of
$G_H$ from decompositions of $\sigma$.

For a $k$-coloring $c$ of a graph, let $c_1$ and $c_2^+$ be the number
of $i\in [k]$ for which the size of the preimage $c^{-1}(i)$ is,
respectively, $1$ and at least $2$.

\begin{thm}\label{thm:prepfortruncate}
  Assume that the hypergraph $H=(E,\{X_i\,:\,i\in[n]\})$ and integers
  $t_i$ satisfy properties \emph{(H2)}, \emph{(H3)}, and \emph{(T)}.
  Let $\rho$ be given by equation \emph{(\ref{eq:genbpmh})}.  Let
  $\sigma$ be any polymatroid on $E$ for which $\sigma(A)=\rho(A)$ if
  $A\subseteq E$ and
  \begin{itemize}
  \item $|A|\leq 3$, or
  \item $|A|=4$ and there are three hyperedges $X_h$, $X_i$, $X_j$
    with $t_h=t_i=t_j=1$ so that some element of $A$ is in exactly one
    of these hyperedges, and each of the other three elements of $A$
    is in exactly two of these hyperedges, or
  \item $A\subseteq X_i$ and $|A|\leq t_i+1$ for some $i\in [n]$, or
  \item $A$ is a subset of the symmetric difference $X_i\btu X_j$, for
    some $i,j\in[n]$ with $X_i\cap X_j\ne\emptyset$, where
    $|A\cap X_i|\leq t_i$ and $|A\cap X_j|\leq t_j$.
  \end{itemize}
  From an element in $\Delta_\sigma^k$, we can construct a
  $k$-coloring $c$ of the line graph $G_H$; moreover,
  $\sigma(E)\geq c_1+2c_2^+$.  If $\sigma=\rho$, then the image of the
  injection $\phi$ in Lemma \ref{lem:colortosum} is $\Delta_\rho^k$,
  so $\phi$ is the bijection asserted in Theorem
  \ref{thm:chiconnection2h}.  If $\sigma(E)<\chi(G_H)$, then $\sigma$
  is indecomposable.
\end{thm}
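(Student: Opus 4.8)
The plan is as follows. Fix $(N_1,\dots,N_k)\in\Delta_\sigma^k$. I will attach to each hyperedge $X_h$ an index $c(h)\in[k]$ for which $N_{c(h)}$ restricts to $U_{t_h,X_h}$ on $X_h$, then check that the resulting map $c$ is a proper $k$-coloring of $G_H$, prove $\sigma(E)\ge c_1+2c_2^+$ by counting with the $c(h)$, and finally read off the two consequences. The one input used everywhere is that, since $|A|\le 3$ is allowed, $\sigma(e)=\rho(e)=w(e)$ for every $e\in E$, so $e$ is a non-loop of exactly $w(e)$ of the matroids $N_i$: one ``slot'' for each hyperedge through $e$.

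Defining $c(h)$. If $t_h=1$, then by (H2) and equation (\ref{eq:ranksgenh}) the set $X_h$ is an incidence set of $\rho$, hence of $\sigma$ --- incidence only involves subsets of size at most $3$, where $\sigma$ and $\rho$ agree --- and equation (\ref{eq:definc}) even holds for $Y=X_h$; so Lemma \ref{lem:forceparallel}(1) provides a unique $i$ in which all elements of $X_h$ are pairwise parallel, and we set $c(h)=i$, which forces $N_{c(h)}|_{X_h}=U_{1,X_h}$. If $t_h\ge 2$, then $X_h$ is no longer an incidence set and we argue through submodularity. For $A\subseteq X_h$ with $|A|\le t_h$ the third hypothesis gives $\sigma(A)=\rho(A)=\sum_{e\in A}\rho(e)$, so in every $N_i$ the non-loops of such an $A$ are independent; for $A\subseteq X_h$ with $|A|=t_h+1$ it gives $\sigma(A)=\sum_{e\in A}\rho(e)-1$, so the ``defect'' $d_i(A)=\sum_{e\in A}r_{N_i}(e)-r_{N_i}(A)\ge 0$ equals $1$ for exactly one index $i(A)$ and $0$ for the rest. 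Using monotonicity of $d_i$ under inclusion, the freeness of $t_h$-subsets just noted, and the element $e_0\in X_h$ with $w(e_0)\le t_h$ furnished by property (T) (which limits how the slots of $e_0$ can be arranged), one shows that $i(A)$ is the same for all $(t_h+1)$-subsets $A$ of $X_h$; this common value is $c(h)$, and a short supplementary argument then gives $N_{c(h)}|_{X_h}=U_{t_h,X_h}$ while $d_i=0$ on $X_h$ for every $i\ne c(h)$. I expect this $t_h\ge 2$ case --- pinning down the ``circuit position'' and the uniform restriction --- to be the main obstacle, and it is where property (T) is really needed.

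Properness and the inequality. Suppose $ij$ is an edge of $G_H$, so $X_i\cap X_j=\{e\}$ with $i\ne j$ by (H2), yet $c(i)=c(j)=\ell$. Then $N_\ell|_{X_i}=U_{t_i,X_i}$ and $N_\ell|_{X_j}=U_{t_j,X_j}$, both with $e$ a non-loop, which forces $r_{N_\ell}(X_i\cup X_j)\le t_i+t_j-1$. Choosing $B_i\subseteq X_i\setminus\{e\}$ and $B_j\subseteq X_j\setminus\{e\}$ with $|B_i|=t_i$ and $|B_j|=t_j$, the set $A=B_i\sqcup B_j$ lies in $X_i\btu X_j$ with $|A\cap X_i|=t_i$ and $|A\cap X_j|=t_j$, so the fourth hypothesis gives $\sigma(A)=\rho(A)$; expanding $\rho(A)=\sum_m r_{N_m}(A)$, the deficit on $N_\ell$ together with the bound $|A\cap X_h|\le 2$ for every other hyperedge $X_h$ (again (H2)) --- and (H3) to dispose of the residual $t=1$ hyperedges meeting both $B_i$ and $B_j$ --- yields a contradiction, after a suitable choice of $B_i,B_j$. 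Hence $c$ is a proper coloring. For the inequality, $\sigma(E)=\sum_{i=1}^k r_{N_i}(E)$; a color class $c^{-1}(i)=\{h\}$ contributes $r_{N_i}(E)\ge r_{N_i}(X_h)=t_h\ge 1$, while a class with two distinct members $h,h'$, disjoint by properness, contributes $r_{N_i}(E)\ge r_{N_i}(X_h\cup X_{h'})\ge 2$ (were this $1$, then $X_h\cup X_{h'}$ would be a single parallel class of $N_i$, whence some $t=1$ hyperedge meeting $X_h$ would also get color $i$, contradicting properness via Lemma \ref{lem:forceparallel}(1)). Summing over $i$ gives $\sigma(E)\ge c_1+2c_2^+$.

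Consequences. When $\sigma=\rho$, I claim $\phi(c)=(N_1,\dots,N_k)$. Indeed, each $g\in E$ is a non-loop of exactly $w(g)$ of the $N_i$, namely of $N_{c(h)}$ for the $w(g)$ hyperedges $X_h\ni g$ --- and these indices are distinct since $c$ is proper --- so the loops of $N_i$ are precisely $E\setminus\bigcup_{c(h)=i}X_h$; since moreover $N_i|_{X_h}=U_{t_h,X_h}$ for each $h$ with $c(h)=i$ and $\sum_i r_{N_i}(E)=\rho(E)=\sum_h t_h$ forces $r_{N_i}(E)=\sum_{c(h)=i}t_h$, submodularity pins $N_i$ down to $\bigl(\bigoplus_{c(h)=i}U_{t_h,X_h}\bigr)\oplus U_{0,Y}$, i.e.\ to $\phi(c)_i$. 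Thus $\phi$, which is injective by Lemma \ref{lem:colortosum} (its hypotheses follow from (T)), is onto $\Delta_\rho^k$, so it is the bijection of Theorem \ref{thm:chiconnection2h}. Finally, if $\sigma(E)<\chi(G_H)$ but $\sigma$ were decomposable, we would obtain a proper coloring $c$ of $G_H$ with $\sigma(E)\ge c_1+2c_2^+\ge c_1+c_2^+\ge\chi(G_H)$, a contradiction; so $\sigma$ is indecomposable.
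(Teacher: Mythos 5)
Your architecture is the paper's: assign to each hyperedge $h$ the index $c(h)$ of the matroid restricting to $U_{t_h,X_h}$, verify properness, bound $\sigma(E)$ by counting ranks per color class, and read off the consequences. The $t_h=1$ assignment, the rank count, and the two consequences are fine. But the two hard steps are asserted rather than proved, and in the properness step the specific device you propose does not work. Take adjacent hyperedges $X_p, X_q$ with $X_p\cap X_q=\{e\}$ and $c(p)=c(q)=\ell$, and your set $A=B_p\sqcup B_q\subseteq X_p\btu X_q$. Your count needs $\sum_m d_m(A)>\alpha$, where $d_m(A)=\sum_{a\in A}r_{N_m}(a)-r_{N_m}(A)$ and $\alpha$ counts cross pairs lying in a common $t=1$ hyperedge. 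The trouble is that such a cross pair may be parallel in $N_\ell$ itself, so its unit of deficit is absorbed into the deficit of $N_\ell$ you have already counted instead of showing up in another matroid. Concretely, with $t_p=t_q=2$, $X_p=\{e,a,a'\}$, $X_q=\{e,b,b'\}$, and $t=1$ hyperedges through $\{a,b\}$ and through $\{a',b'\}$ whose parallel classes both sit in $N_\ell$, one gets $d_\ell(A)=2=\alpha$ and $d_m(A)=0$ for $m\ne\ell$, so $\sum_m r_{N_m}(A)=\rho(A)$ and there is no contradiction; moreover $B_p$ and $B_q$ are forced here, so ``a suitable choice'' is unavailable. The paper instead eliminates $e$ from circuits of $N_\ell$ inside $X_p$ and $X_q$ to get a single circuit $C$: when $|C|>2$ no two of its elements are parallel in $N_\ell$, which is exactly what makes the count close, and the case $|C|=2$ is handled by sets such as $A\cup e_q$ that contain $e$ and are \emph{not} subsets of $X_p\btu X_q$.

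A telltale sign of the gap is that you never invoke the second bullet of the hypothesis (the $4$-element sets). That clause exists precisely for the residual case $t_p=t_q=1$ with a $t=1$ hyperedge through the cross pair: property (H3) only forbids the degenerate triangle of $2$-element hyperedges, so some hyperedge has a fourth point, and the paper's Sublemma needs the value of $\sigma$ on the resulting $4$-set to derive a contradiction. Your one-clause appeal to (H3) does not substitute for that argument. Likewise, the claim that for $t_h\geq 2$ all $(t_h+1)$-subsets of $X_h$ are circuits of the same matroid is the other place where real work is required (the paper pins down the index by circuit elimination among the $t_h+1$ circuits inside a $(t_h+2)$-set containing the low-$w$ element supplied by (T), then propagates by exchange); you correctly flag it as the main obstacle but leave it at ``one shows.'' Finally, singleton hyperedges need a separate definition of $c(h)$, since the restriction criterion does not single out an index for them; the paper chooses the unique unused non-loop slot of the vertex, and this case is missing from your construction.
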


\begin{proof}
  Fix $(N_1,N_2,\ldots,N_k)\in \Delta_\sigma^k$.  Since
  $\sigma(e)=\rho(e)=w(e)$ for each $e\in E$, we have $r_{N_i}(e)=1$
  for exactly $w(e)$ integers $i\in[k]$.
  Consider a
  hyperedge $X_h$ with $|X_h|>1$.  If $t_h=1$, then $X_h$ is an
  incidence set of $\sigma$, so, by Lemma \ref{lem:forceparallel},
  there is exactly one $i\in[k]$ for which $N_i|X_h=U_{t_h,X_h}$.  We
  claim that the same holds when $t_h>1$.  To see this, fix $h$ with
  $t_h>1$, so $t_h<|X_h|$.  From equation (\ref{eq:ranksgenh}) and the
  hypothesis, for $A\subseteq X_h$, we have
  \begin{itemize}
  \item[(1)] if $|A|\leq t_h$, then $\sigma(A)=\rho(A)=w(A)$, so for
    all $j\in[k]$, the restriction $N_j|A$ has only loops and coloops,
    and
  \item[(2)] if $|A|=t_h+1$, then $\sigma(A)=\rho(A)=w(A)-1$, so $A$
    is a circuit of exactly one matroid $N_i$ with $i\in[k]$.
  \end{itemize}
  The claim follows if $|X_h|=t_h+1$.  When $|X_h|\geq t_h+2$, fix
  $A\subseteq X_h$ with $|A|=t_h+2$ and containing an element $f$ with
  $w(f) \leq t_h$.  The set $A$ has $t_h+1$ subsets of size $t_h+1$
  that contain $f$.  Each such subset of $A$ is a circuit of some
  matroid $N_i$.  Now $w(f)\leq t_h$, so $r_{N_i}(f)=1$ for at most
  $t_h$ matroids $N_i$.  Thus, there are two elements $a$ and $a'$ in
  $A-f$ and an integer $i\in[k]$ so that both $A-a$ and $A-a'$ are
  circuits of $N_i$.  Applying circuit elimination to these two
  circuits of $N_i$ and using deduction (1) above shows that all
  $(t_h+1)$-element subsets of $A$ are circuits of $N_i$, that is,
  $N_i|A=U_{t_h,A}$.  If $|X_h|\geq t_h+3$ and $b \in X_h-A$, then fix
  $x\in A-f$ and let $A'=(A\cup b)-x$.  By the same argument,
  $N_j|A'=U_{t_h,A'}$ for some $j\in [n]$; since $|A\cap A'|=t_h+1$,
  we have $j=i$ by deduction (2).  Sequences of such exchanges give
  $N_i|X_h=U_{t_h,X_h}$.

  Define $c:[n]\to[k]$ by, for each $h\in [n]$, setting $c(h)=i$ where
  \begin{itemize}
  \item if $|X_h|>1$, then $i$ is the unique integer in $[k]$ for
    which $N_i|X_h=U_{t_h,X_h}$, and
  \item if $X_h=\{e\}$, then $i$ is the unique element in the
    difference
    $$\{j\,:\,r_{N_j}(e)=1\}-\{c(s)\,:\,e\in X_s \text{ and }
    |X_s|>1\}.$$
  \end{itemize}
  We next show that $c$ is a coloring of $G_H$.  The argument implies
  that if $X_h=\{e\}$, then
  $|\{c(s)\,:\,e\in X_s \text{ and } |X_s|>1\}|=w(e)-1$, so the
  difference above contains exactly one element, and so $c$ is
  well-defined.

  Let vertices $p$ and $q$ be adjacent in $G_H$, so
  $X_p\cap X_q\ne \emptyset$.  Assumption (H2) gives
  $X_p\cap X_q=\{e\}$ for some $e\in E$.  We show by contradiction
  that, as required of a coloring, $c(p)\ne c(q)$.  We may assume that
  $X_p\ne\{e\}$ and $X_q\ne\{e\}$, for otherwise the conclusion
  follows by the definition of $c$.  Assume that $ c(p)= c(q)=i$.  Let
  $C_p$ and $C_q$ be circuits of the uniform matroids $N_i|X_p$ and
  $N_i|X_q$, respectively, with $e\in C_p\cap C_q$.  There is a
  circuit $C$ of $N_i$ with $C\subseteq (C_p\cup C_q)-e$ by circuit
  elimination.  Let $\alpha$ be the number of pairs of elements in
  $C$, one in $X_p$ and one in $X_q$, that are also in some other
  hyperedge, say $X_j$, that has $t_j=1$.  By property (H2), each such
  pair is in just one hyperedge and no two such pairs are in the same
  hyperedge.  Since $|C\cap X_p|\leq t_p$ and $|C\cap X_q|\leq t_q$,
  equation (\ref{eq:genbpmhrk}) and the hypotheses give
  $\sigma(C)=\rho(C)=w(C)-\alpha$.  First assume that $|C|>2$.  Thus,
  either $t_p>1$ or $t_q>1$.  Also, no two elements of $C$ are
  parallel in $N_i$, so the pairs that $\alpha$ counts are parallel in
  matroids $N_j$ with $j\in[k]-i$.  Furthermore, no two pairs that
  $\alpha$ counts and that share an element are parallel in the same
  matroid, for if $\{a,b\}$ and $\{a,c\}$ are two such pairs with, say
  $b,c\in X_q$, and both are circuits of $N_m$, then $b$ and $c$ are
  parallel in $N_m$ and so, by the observation after equation
  (\ref{eq:ranksgenh}), are in a hyperedge $X_h$ with $t_h=1$;
  however, since $b,c\in X_q$, we have $h=q$, but
  $2\leq |C\cap X_q|\leq t_q$, so $t_q\ne 1$.  We now get
  $$r_{N_1}(C)+ r_{N_2}(C)+ \cdots+ r_{N_k}(C)\leq
  w(C)-1-\alpha< \sigma(C).$$ This contradiction shows that
  $C=\{e_p,e_q\}$ for some $e_p\in X_p$ and $e_q\in X_q$.
  
  By the observation after equation (\ref{eq:ranksgenh}), since $C$ is
  a circuit of $N_i$, there is a hyperedge $X_s$ with $C\subseteq X_s$
  and $t_s=1$.  Also, $s\not\in\{p,q\}$.  Since $t_s=1$, the elements
  in $X_s$ are parallel in exactly one matroid, which $C$ shows is
  $N_i$.

  Since $C$ is a circuit of $N_i$, if either $\{e,e_p\}$ or
  $\{e,e_q\}$ is a circuit of $N_i$, then both are.  Thus, $t_p=1$ if
  and only if $t_q=1$.  The sublemma below implies that $t_p>1$ and
  $t_q>1$.
  
  \begin{sublemma}\label{lem:t_i=1coloring}   
    Assume that $\{a,b,c\}\subseteq [n]$ with $t_a=t_b=t_c=1$.  If
    $F=\{f_a, f_b, f_c\}$ is a set of three vertices with
    $\{f_b,f_c\}\subseteq X_a$, $\{f_a,f_c\}\subseteq X_b$, and
    $\{f_a,f_b\}\subseteq X_c$, then there is no $i\in[k]$ with
    $N_i|F=U_{1,F}$.
  \end{sublemma}

  \begin{proof}
    Assume instead that $N_i|F= U_{1,F}$.  Thus, $N_j|X_a=U_{1,X_a}$
    if and only if $j=i$ by Lemma \ref{lem:forceparallel}, and
    likewise for $X_b$ and $X_c$.  By property (H3) and symmetry, we
    may assume that $ \{f_b,f_c\}\subsetneq X_a$.  Let $F'=F\cup f$
    where $f\in X_a-\{f_b,f_c\}$.  Now $\sigma(X)=\rho(X)$ if
    $X\subseteq F'$.  We will derive the contradiction that $\sigma$
    and $r_{N_1}+ r_{N_2}+ \cdots+ r_{N_k}$ differ on $F'$.

    We first consider the $3$-element subsets of $F'$.  By the
    hypotheses, equation (\ref{eq:genbpmhrk}) applies to all subsets
    of $F'$, so, since $t_a=1$,
    $$\sigma(\{f_b,f_c,f\})=1+\bigl(w(f_b)-1\bigr)+\bigl(w(f_c)-1\bigr)
    +\bigl(w(f)-1\bigr) =w(\{f_b,f_c,f\})-2.$$ Thus, since
    $r_{N_i}(\{f_b,f_c,f\})=1$, the restriction of each other matroid
    $N_j$ to $\{f_b,f_c,f\}$ has only loops and coloops.  Equation
    (\ref{eq:genbpmhrk}) also gives
    $$\sigma(F)=3+\bigl(w(f_a)-2\bigr)+\bigl(w(f_b)-2\bigr)
    +\bigl(w(f_c)-2\bigr) =w(F)-3,$$ since each of $X_a$, $X_b$, and
    $X_c$ contains exactly two of $f_a,f_b,f_c$.  Thus, since
    $r_{N_i}(F)=1$, the set $F$ is a circuit of exactly one other
    matroid $N_j$.  Since $f_a$ and $f$ are parallel in $N_i$, some
    hyperedge $X_d$ with $t_d=1$ must contain $f_a$ and $f$.  Thus,
    the same argument also shows that $\{f_a,f_c,f\}$ is a circuit of
    exactly one of the matroids other than $N_i$, as is
    $\{f_a,f_b,f\}$.  No two of $F$, $\{f_a,f_c,f\}$, and
    $\{f_a,f_b,f\}$ can be circuits of the same matroid $N_j$ since
    that would give the contradiction that either $\{f_b,f_c,f\}$ or
    one of its two-element subsets is a circuit of $N_j$.  Since
    $t_a=t_b=t_c=t_d=1$, equation (\ref{eq:genbpmhrk}) gives
    $$\sigma(F')=4+\bigl(w(f_a)-3\bigr)+\bigl(w(f_b)-2\bigr)
    +\bigl(w(f_c)-2\bigr) +\bigl(w(f)-2\bigr)=w(F')-5.$$ Thus, since
    $F$, $\{f_a,f_c,f\}$, and $\{f_a,f_b,f\}$ are circuits of three
    different matroids and $N_i|F'= U_{1,F'}$, we get the claimed
    contradiction,
    \begin{equation*}
      r_{N_1}(F')+ r_{N_2}(F')+ \cdots+ r_{N_k}(F') \leq w(F')-6 <
      \sigma(F').\qedhere 
    \end{equation*}
  \end{proof}

  Thus, $t_p>1$.  Fix $A \subseteq X_p$ with $|A|=t_p$ and
  $e_p\not\in A$.  Thus, $A\cup e_p$ is a circuit of $N_i$, as is
  $A\cup e_q$ since $e_p$ and $e_q$ are parallel in $N_i$.  Since
  $t_p>1$, the only element of $X_p$ that is parallel to $e_q$ in
  $N_i$ is $e_p$. Thus,
  $$r_{N_1}(A\cup e_q)+ r_{N_2}(A\cup e_q)+ \cdots+ r_{N_k}(A\cup
  e_q)\leq w(A)+w(e_q)-1-\delta$$ where $\delta$ is the number of
  elements $a\in A$ for which $a$ and $e_q$ are in another hyperedge,
  say $X_m$, with $t_m=1$ (so $a$ and $e_q$ are parallel in $N_j$ for
  some $j\ne i$).  This is a contradiction since
  $\sigma(A\cup e_q)=\rho(A\cup e_q) = w(A)+w(e_q)-\delta$ by the
  hypothesis and equation (\ref{eq:genbpmhrk}).

  Thus, $c$ is a coloring of $G_H$.
  If
  $|c^{-1}(i)|=1$, then $r(N_i)\geq 1$.  Assume that
  $|c^{-1}(i)|\geq 2$; say $c(p)=c(q)=i$ with $p\ne q$.  If
  $r(N_i)=1$, then any $x\in X_p$ and $y\in X_q$ are parallel in
  $N_i$, so they are in some hyperedge $X_j$ with $t_j=1$.  The
  elements in $X_j$ are parallel in exactly one matroid, which $x$ and
  $y$ show is $N_i$, so $c(j)=i$, but this is impossible since $c$ is
  a coloring of $G_H$ and $X_p\cap X_j\ne\emptyset$.  Thus,
  $r(N_i)\geq 2$.  Therefore
  $\sigma(E)\geq c_1+2c_2^+\geq c_1+c_2^+\geq \chi(G_H)$.  Finally,
  $\rho(E) = t_1+t_2+\cdots+t_n$ by equation (\ref{eq:genbpmhrk}), so
  when $\sigma=\rho$, each $N_i$ must be the direct sum of its maximal
  restrictions to uniform matroids, so equation
  (\ref{eq:dirsumdecompchih}) holds and
  $\phi(c)=(N_1,N_2,\ldots,N_k)$.
\end{proof}

\begin{cor}
  The hypergraph $H$ can be obtained from the polymatroid
  $\rho$.
\end{cor}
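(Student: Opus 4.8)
The plan is to read $H$---and, in fact, all the integers $t_i$ as well---off from any single decomposition of $\rho$, using Theorem \ref{thm:prepfortruncate} to control what the matroids in a decomposition can look like. To begin, the vertex set of $H$ is the ground set $E$ of $\rho$, and since equation (\ref{eq:genbpmh}) exhibits $\rho$ as $n$-decomposable, $\chi(\rho)$ is finite, so (taking $k=\chi(\rho)$) there is some $(N_1,N_2,\ldots,N_k)\in\Delta_\rho^k$. Applying Theorem \ref{thm:prepfortruncate} with $\sigma=\rho$ produces a $k$-coloring $c$ of $G_H$ for which $N_i=\bigl(\bigoplus_{h\,:\,c(h)=i}U_{t_h,X_h}\bigr)\oplus U_{0,Y_i}$ for each $i\in[k]$, where $Y_i=E-\bigcup_{h\,:\,c(h)=i}X_h$. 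So it suffices to recover the multiset $\{(X_h,t_h)\,:\,h\in[n]\}$ from the matroids $N_1,N_2,\ldots,N_k$.

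The main step is to recover the hyperedges of size at least two as the ground sets of the connected components of the matroids $N_i$ that have at least two elements, with $t_h$ being the rank of the corresponding component. By property (T) we have $1\le t_h<|X_h|$ whenever $|X_h|\ge2$ (if $t_h\ne1$ this is immediate since $\min\{w(e)\,:\,e\in X_h\}\ge1$, and if $t_h=1$ it follows from $|X_h|\ge2$), so $U_{t_h,X_h}$ is connected; moreover a color class $c^{-1}(i)$ is an independent set of $G_H$, so the hyperedges $X_h$ with $c(h)=i$ are pairwise disjoint, and hence $U_{t_h,X_h}$ is a whole connected component of $N_i$. Conversely, a connected component of $N_i$ with at least two elements cannot meet the loop summand $U_{0,Y_i}$ and cannot be split between two of the disjoint summands $U_{t_h,X_h}$, so it coincides with one of them (with $|X_h|\ge2$). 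Distinct hyperedges have distinct vertex sets, so there is no over-counting, and since every hyperedge $X_h$ with $|X_h|\ge2$ arises as a component of $N_{c(h)}$, this step recovers all such hyperedges together with their $t_h$.

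Finally I would recover the singleton hyperedges: $\{e\}$ is a hyperedge of $H$ (necessarily with $t=1$, by the range $0\le t_h\le|X_h|$ together with property (T)) if and only if $e$ is a coloop of some $N_i$. Indeed, if $\{e\}=X_h$ with $c(h)=i$, then no other hyperedge colored $i$ contains $e$ (otherwise $G_H$ would have an edge inside the color class $i$), so in $N_i$ the element $e$ lies only in the summand $U_{1,\{e\}}$ and is therefore a coloop of $N_i$; conversely, a coloop of $N_i$ lies in some summand $U_{t_h,X_h}$ as a coloop, which forces $X_h=\{e\}$. Combined with the previous step, this reconstructs $\mathcal{E}$ (and all the $t_i$) from $\rho$. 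Given Theorem \ref{thm:prepfortruncate}, the argument is short; the only delicate points I anticipate are the bookkeeping for singleton hyperedges and for any vertices lying in no hyperedge (where $w$ and $\rho$ vanish and which are simply loops of every $N_i$), together with the routine observation that $U_{r,X}$ is connected whenever $0<r<|X|$. One may also note that the output does not depend on the chosen decomposition, since the multiset $\{U_{t_h,X_h}\,:\,h\in[n]\}$ of ``blocks'' is the same for every coloring $c$.
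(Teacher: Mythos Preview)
Your proof is correct and follows essentially the same approach as the paper: invoke the bijection in Theorem~\ref{thm:prepfortruncate} (with $\sigma=\rho$) to see that any decomposition has the form~(\ref{eq:dirsumdecompchih}), and then read off the hyperedges as the positive-rank connected components of the $N_i$. The paper compresses your two cases into one sentence---``delete the loops of $N_i$; the ground sets of the connected components of these matroids are the hyperedges''---which handles singletons and larger hyperedges uniformly, since a singleton hyperedge becomes a one-element component after loops are removed; your separate treatment of coloops is equivalent but a bit more verbose, and your recovery of the $t_i$ is a small bonus beyond what the corollary asserts.
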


\begin{proof}
  Given a decomposition $\{N_i\,:\,i\in[k]\}$ of $\rho$, for each
  $i\in [k]$, delete the loops of $N_i$; the ground sets of the
  connected components of these matroids are the hyperedges.
\end{proof}

Recall that a $(k+1)$-critical graph has chromatic number $k+1$ but
each deletion of one edge has chromatic number $k$.

\begin{cor}\label{cor:criticalgraph2h}
  Assume, in addition to the hypotheses of Theorem
  \ref{thm:chiconnection2h}, that property \emph{(H1)} holds.  If the
  line graph $G_H$ is $(k+1)$-critical, then $\rho$ is an excluded
  minor for $\mathcal{D}_k$.
\end{cor}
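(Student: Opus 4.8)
The plan is to check the two requirements for $\rho$ to be an excluded minor for $\mathcal{D}_k$: that $\rho\notin\mathcal{D}_k$, and that every proper minor of $\rho$ lies in $\mathcal{D}_k$. The first is immediate from the hypotheses: since $G_H$ is $(k+1)$-critical, $\chi(G_H)=k+1$, and Theorem~\ref{thm:chiconnection2h} says that the least $j$ with $\rho\in\mathcal{D}_j$ equals $\chi(G_H)$; hence $\rho\notin\mathcal{D}_k$. For the second requirement, since $\mathcal{D}_k$ is minor-closed and any proper minor of $\rho$ is $(\rho_{\del A})_{/B}$ with $A\cup B$ nonempty --- hence a minor of $\rho_{\del e}$ for some $e\in A$, or of $\rho_{/e}$ for some $e\in B$ when $A=\emptyset$ --- it suffices to prove $\rho_{\del e}\in\mathcal{D}_k$ and $\rho_{/e}\in\mathcal{D}_k$ for each $e\in E$. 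Only properties (H1) and (H2) are needed for this part, and I would handle the deletion and the contraction in parallel.

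First I would record the common shape of these single-element minors. By~(\ref{eq:mindelcon}), $\rho_{\del e}=\sum_{h=1}^n r_{M_h\del e}$ and $\rho_{/e}=\sum_{h=1}^n r_{M_h/e}$, where $M_h=U_{t_h,X_h}\oplus U_{0,E-X_h}$. A routine computation of deletions and contractions of uniform matroids (using $U_{t,X}\del e=U_{\min\{t,|X|-1\},X-e}$ for $e\in X$, and $U_{t,X}/e=U_{t-1,X-e}$ when $e\in X$ and $t\ge 1$, while $e$ is a loop of $M_h$ when $e\notin X_h$) shows that in both cases the $h$-th summand is $r_{M'_h}$ with $M'_h=U_{s_h,\,X_h\setminus\{e\}}\oplus U_{0,\,(E-e)\setminus(X_h\setminus\{e\})}$ for a suitable integer $s_h\ge 0$. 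Next I would use (H1) to pick two distinct indices $p,q$ with $e\in X_p$ and $e\in X_q$; then $pq$ is an edge of $G_H$, and (H2) forces $X_p\cap X_q=\{e\}$, so $(X_p\setminus\{e\})\cap(X_q\setminus\{e\})=\emptyset$.

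Now invoke $(k+1)$-criticality of $G_H$: the graph $G_H\del pq$ has a proper $k$-coloring $c\colon[n]\to[k]$. Following Lemma~\ref{lem:colortosum}, for $i\in[k]$ set $N_i=\bigl(\bigoplus_{h\,:\,c(h)=i}U_{s_h,\,X_h\setminus\{e\}}\bigr)\oplus U_{0,Z_i}$, where $Z_i=(E-e)\setminus\bigcup_{h\,:\,c(h)=i}(X_h\setminus\{e\})$. The point to verify is that within each color class the ground sets $X_h\setminus\{e\}$ are pairwise disjoint, so that $N_i$ is a well-defined matroid on $E-e$: if $c(h_1)=c(h_2)$ with $h_1\ne h_2$, then either $X_{h_1}\cap X_{h_2}=\emptyset$, and we are done, or $h_1h_2$ is an edge of $G_H$, in which case $c$ being proper on $G_H\del pq$ forces $\{h_1,h_2\}=\{p,q\}$, for which the intersection of $X_p\setminus\{e\}$ and $X_q\setminus\{e\}$ is empty by the previous paragraph. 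Since each $N_i$ is a direct sum of uniform matroids on disjoint ground sets, it is a matroid, and $\sum_{i=1}^k r_{N_i}=\sum_{h=1}^n r_{M'_h}$, which is $\rho_{\del e}$ in the deletion case and $\rho_{/e}$ in the contraction case. Hence $\rho_{\del e},\rho_{/e}\in\mathcal{D}_k$, which together with $\rho\notin\mathcal{D}_k$ finishes the proof.

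The argument is short; the only real content is the bookkeeping in the second paragraph --- identifying the common ``$U_{s_h,X_h\setminus\{e\}}\oplus(\text{loops})$'' form of all summands of both minors --- and, just after it, the use of (H1) and (H2) to produce a single edge $pq$ of $G_H$ whose removal is enough to decouple the sets $X_h\setminus\{e\}$ color class by color class. One should also note that a $k$-coloring that happens to use fewer than $k$ colors causes no trouble (the unused $N_i$ is an all-loops matroid, contributing zero), and recall that $\mathcal{D}_j\subseteq\mathcal{D}_k$ for $j\le k$.
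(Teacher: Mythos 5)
Your proof is correct and follows essentially the same route as the paper: criticality plus Theorem~\ref{thm:chiconnection2h} gives $\rho\notin\mathcal{D}_k$, and (H1)--(H2) show that removing $e$ from the hyperedges destroys at least one edge of $G_H$, so a $k$-coloring of the resulting graph yields, via the Lemma~\ref{lem:colortosum} construction, $k$-decompositions of both $\rho_{\del e}$ and $\rho_{/e}$. The only cosmetic difference is that the paper colors $G_e$, obtained by deleting \emph{all} edges $ij$ with $X_i\cap X_j=\{e\}$ (which is exactly the line graph of the modified hypergraph, so Lemma~\ref{lem:colortosum} applies verbatim), whereas you delete a single edge $pq$ and then verify disjointness within color classes by hand.
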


\begin{proof}
  Since $\chi(G_H)=k+1$, Theorem \ref{thm:chiconnection2h} gives
  $\rho\not\in \mathcal{D}_k$.  For $e\in E$, let $G_e$ be the
  subgraph of $G_H$ obtained by deleting all edges $ij$ with
  $X_i\cap X_j=\{e\}$.  By properties (H1) and (H2), at least one edge
  of $G_H$ is deleted to get $G_e$.  With equation
  (\ref{eq:mindelcon}) and Lemma \ref{lem:colortosum}, we get
  $\chi(\rho_{\del e})\leq\chi(G_e)$ and
  $\chi(\rho_{/e})\leq\chi(G_e)$, so $\rho_{\del e}\in \mathcal{D}_k$
  and $\rho_{/e}\in \mathcal{D}_k$.
\end{proof}

\begin{ex:bpm2}
  By Corollary \ref{cor:criticalgraph2h}, if $G$ is a $(k+1)$-critical
  graph other than the $3$-cycle, then each $2$-polymatroid that we
  obtain from it is an excluded minor for $\mathcal{D}_k$.
  Thus, finding all $2$-polymatroids that are
  excluded minors for $\mathcal{D}_k$
  is at least as hard as finding
  all $(k+1)$-critical graphs.  The $3$-cycle $C_3$ shows the need for
  property (H3): while $\chi(C_3)=3$, the set $\{U_{1,E},U_{2,E}\}$ is
  a decomposition of $\rho_{C_3}$, so $\chi(\rho_{C_3})=2$.
\end{ex:bpm2}

The next two examples show, in contrast to the case of graphs, that
when $\chi(\rho)$ is finite, the differences
$\chi(\rho)-\chi(\rho_{\del e})$ and $\chi(\rho)-\chi(\rho_{/e})$ can
be arbitrarily large.  A famous open problem asks: are there finite
affine or projective planes of orders that are not powers of primes?
Thus, these examples also confirm the difficulty of characterizing
$\mathcal{D}_k$ by excluded minors.

\begin{ex:aff2} 
  In the example from an affine plane of order $q$, the line graph of
  the hypergraph $H=(E,\mathcal{E})$ is complete, so $\chi(\rho)=q^2$.
  For an element (i.e., line) $L\in E$, the hypergraph for
  $\rho_{\del L}$ and $\rho_{/L}$ is
  $H_L=(E-\{L\},\{X-\{L\}\,:\,X\in \mathcal{E}\})$.  The $q$
  hyperedges of $H$ that contain $L$ give $q$ disjoint hyperedges in
  $H_L$, so $\rho_{\del L}$ and $\rho_{/L}$ are in
  $\mathcal{D}_{q^2-q+1}$ by Lemma \ref{lem:colortosum}.  Thus, $\rho$
  is an excluded minor for $\mathcal{D}_k$ if $q^2-q< k< q^2$.  The
  extension of this example to rank-$3$ simple matroids yields more
  excluded minors for some classes $\mathcal{D}_k$.
\end{ex:aff2}

\begin{ex:ppl2} 
  In the example from a projective plane of order $q$, the line graph
  is complete, so $\chi(\rho)=q^2+q+1$.  The argument in the previous
  example, now using the $q+1$ hyperedges that contain a fixed element
  (i.e., point) $e$, shows that $\rho_{\del e}$ and $\rho_{/e}$ are in
  $\mathcal{D}_{q^2+1}$.  Thus, $\rho$ is an excluded minor for
  $\mathcal{D}_k$ if $q^2< k< q^2+q+1$.  The extension of this example
  to some rank-$3$ simple matroids yields more excluded minors for
  some classes $\mathcal{D}_k$ if the line graph (which need not be
  complete) is critical.
\end{ex:ppl2}

We end this section by exploring some of the polymatroids $\sigma$
that Theorem \ref{thm:prepfortruncate} treats.  For simplicity we
focus on truncations.  For a polymatroid $\rho$ on $E$ and nonnegative
integer $s$ with $s\leq \rho(E)$, the \emph{truncation of $\rho$ to
  rank $s$} is the polymatroid $T (\rho ,s)$ on $E$ that is given by
$T(\rho ,s)(X)= \min\{\rho(X),s\}$ for $X\subseteq E$.  The next
corollary of Theorem \ref{thm:prepfortruncate} identifies many
truncations that are indecomposable.

\begin{cor}\label{cor:colorconnect2gen}
  Let the hypergraph $H$ satisfy properties \emph{(H2)} and
  \emph{(H3)}.  Let $\rho$ be given by equation
  \emph{(\ref{eq:genbpmh})} where each $t_i$ is $1$.  Fix $s$ with
  $\max\{\rho(A)\,:\,A\subseteq E,\, |A|\leq 4 \} \leq s<\rho(E)$.  If
  the truncation $T(\rho ,s)$ is $k$-decomposable, then there is a
  $k$-coloring $c$ of the line graph $G_H$ and $s\geq c_1+2c_2^+$.  If
  $s<\chi(G_H)$, then $T(\rho ,s)$ is indecomposable.
\end{cor}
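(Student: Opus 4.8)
The plan is to obtain this as a direct specialization of Theorem~\ref{thm:prepfortruncate}, taking $\sigma=T(\rho,s)$. First I would recall that a truncation of a polymatroid is again a polymatroid, so $\sigma$ is a polymatroid on $E$; and since $s<\rho(E)$, we have $\sigma(E)=\min\{\rho(E),s\}=s$. The hypergraph $H$ satisfies (H2) and (H3) by hypothesis, and since each $t_i$ is $1$, property (T) holds automatically (the alternative ``$t_i=1$'' is met). So the only thing that needs checking is that $\sigma(A)=\rho(A)$ for each set $A$ singled out in the four-item list of Theorem~\ref{thm:prepfortruncate}; since $\sigma(A)=\min\{\rho(A),s\}$, this amounts to showing $\rho(A)\le s$ for each such $A$.

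I would verify this uniformly by observing that, when every $t_i=1$, every set $A$ appearing in that list has $|A|\le 4$. Indeed, the first item has $|A|\le 3$ and the second has $|A|=4$; in the third item $|A|\le t_i+1=2$; and in the fourth item the hypothesis $A\subseteq X_i\btu X_j$ forces $A\cap X_i$ and $A\cap X_j$ to be disjoint with union $A$, so $|A|=|A\cap X_i|+|A\cap X_j|\le t_i+t_j=2$. Hence in all cases $\rho(A)\le\max\{\rho(B)\,:\,B\subseteq E,\ |B|\le 4\}\le s$ by the choice of $s$, so $\sigma(A)=\rho(A)$, and the hypotheses of Theorem~\ref{thm:prepfortruncate} are fully in force for $\sigma=T(\rho,s)$.

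With that in hand, Theorem~\ref{thm:prepfortruncate} says that from any element of $\Delta_\sigma^k$, that is, from any $k$-decomposition of $T(\rho,s)$, one can construct a $k$-coloring $c$ of $G_H$ with $\sigma(E)\ge c_1+2c_2^+$; since $\sigma(E)=s$, this gives $s\ge c_1+2c_2^+$, which is the first assertion. For the second assertion, if $s<\chi(G_H)$ then $\sigma(E)=s<\chi(G_H)$, so $T(\rho,s)$ is indecomposable by the final sentence of Theorem~\ref{thm:prepfortruncate}; equivalently, any $k$-coloring $c$ of $G_H$ has $c_1+2c_2^+\ge c_1+c_2^+\ge\chi(G_H)>s$, contradicting the inequality $s\ge c_1+2c_2^+$ just obtained.

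There is no substantial obstacle here, since the corollary is in essence the case $\sigma=T(\rho,s)$ of Theorem~\ref{thm:prepfortruncate}. The one point that needs care is the bookkeeping in the middle step: one must confirm that, once all $t_i=1$, every set $A$ flagged by Theorem~\ref{thm:prepfortruncate} has at most four elements, so that the single numerical condition $\max\{\rho(A):|A|\le 4\}\le s$ is precisely what is needed. In particular, in the fourth item it is the hypothesis $A\subseteq X_i\btu X_j$ (rather than $A\subseteq X_i\cup X_j$) that makes $A\cap X_i$ and $A\cap X_j$ disjoint and hence caps $|A|$ at $t_i+t_j=2$.
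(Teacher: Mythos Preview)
Your proposal is correct and follows exactly the approach the paper intends: the corollary is stated in the paper as an immediate consequence of Theorem~\ref{thm:prepfortruncate} with $\sigma=T(\rho,s)$, and you have simply (and correctly) filled in the bookkeeping that every set $A$ in the four-item list has $|A|\le 4$ once all $t_i=1$, so the single bound $\max\{\rho(A):|A|\le 4\}\le s$ suffices.
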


We now give examples and results that show that special features of
some polymatroids allow one to deduce indecomposability for a wider
range of truncations.

In the special case of the Boolean polymatroid $\rho_G$ of a simple
graph $G=(V,E)$, no component of which is a $3$-cycle, routine graph
arguments show that, when $4\leq s\leq |V|$, one can determine the
incidence sets of $\rho_G$ and deduce that they are parallel classes
of the matroids in any decomposition of the truncation $T (\rho ,s)$.
Thus, if $4\leq s\leq |V|$, then the conclusions in Corollary
\ref{cor:colorconnect2gen} hold for $T (\rho ,s)$, using $G$ in place
of $G_H$.

\begin{example}\label{ex:furthercycle}
  Let $\rho$ be the truncation to rank $4$ of the Boolean polymatroid
  of the cycle $C_{2t+1}$ on $2t+1$ edges, where $t\geq 2$.  We claim
  that $\rho$ is an excluded minor for $\mathcal{D}_k$ for all
  $k\geq 2$.  For any coloring $c$ of $C_{2t+1}$ with $t\geq 2$, we
  have (i) $c_1+c_2^+\geq 5$, or (ii) $c_1+c_2^+=4$ and $c_2^+\geq 1$,
  or (iii) $c_1+c_2^+=3$ and $c_2^+\geq 2$.  Thus, $c_1+2c_2^+\geq 5$,
  so $\rho\not\in\mathcal{D}_k$.  A single-element contraction of
  $\rho$ has rank two and has two rank-one elements; it is easy to
  check that it is in $\mathcal{D}_2$.  A single-element deletion is
  the truncation, to rank $4$, of the Boolean polymatroid of a path,
  say with vertices $v_1,v_2,\ldots,v_{2t+1}$, in order. A
  decomposition of the deletion has two rank-$2$ matroids, one with
  parallel classes (in the notation of Example \ref{ex:booleanp})
  $X_i$ with $i$ odd, and the other with parallel classes $X_i$ with
  $i$ even.
\end{example}

The next example treats a truncation of the Boolean polymatroid of a
tree with $n$ edges and $n-1$ leaves.  Note that property (H2) fails.

\begin{example}\label{ex:infiniteclassforD2}
  For a set $E$ with $|E|=n\geq 4$, let $\{A,B,C\}$ be a partition of
  $E$ with $|C|=1$.  Let the decomposition
  $\{ U_{1,A\cup C}\oplus U_{0,B},\, U_{1,B\cup C}\oplus U_{0,A},\,
  U_{n-2,A\cup B}\oplus U_{0,C}\}$ define $\rho$.  We claim that
  $\rho$ is an excluded minor for $\mathcal{D}_2$.  With equation
  (\ref{eq:mindelcon}), it follows that $\rho_{\del e}$ and
  $\rho_{/e}$ are in $\mathcal{D}_2$ for all $e\in E$.  To see that
  $\rho\notin\mathcal{D}_2$, assume instead that
  $\rho = r_{M_1}+ r_{M_2}$.  Both $A\cup C$ and $B\cup C$ are
  incidence sets of $\rho$, and $\rho(\{a,b\})=\rho(a)+\rho(b)$ for
  all $a\in A$ and $b\in B$.  Thus, Lemma \ref{lem:forceparallel}
  implies that $A\cup C$ is a parallel class of one of the matroids,
  say $M_1$, and $B\cup C$ is a parallel class of $M_2$.  Since
  $\rho(A\cup C)=|A|+2$, we get $r_{M_2}(A\cup C)=|A|+1$; likewise,
  $r_{M_1}(B\cup C)=|B|+1$.  This gives the contradiction
  $$r(M_1)+r(M_2) \geq |A|+1+|B|+1>n=\rho(E).$$
\end{example}

Among the many hypergraphs to which the next result applies are those
that we obtain from affine planes as in Example
\ref{ex:affine}. 

\begin{thm}
  Assume that the hypergraph $H=(E,\{X_i\,:\,i\in[n]\})$ satisfies
  property \emph{(H2)}, that $G_H$ is complete, and that whenever $p$
  and $q$ are distinct elements of $[n]$, there are elements
  $a\in X_p$ and $b\in X_q$ for which no $i\in[n]$ has
  $\{a,b\}\subseteq X_i$.  Let $\rho$ be given by equation
  \emph{(\ref{eq:genbpmh})} where each $t_i$ is $1$.  Let $\sigma$ be
  any polymatroid on $E$ for which $\sigma(A)=\rho(A)$ whenever either
  \emph{(i)} $|A|\leq 2$ or \emph{(ii)} $|A|=3$ and $A\subseteq X_i$
  for some $i\in [n]$.  If $\sigma(E)<n$, then $\sigma$ is
  indecomposable.  In particular, the truncation $T (\rho ,s)$ is
  indecomposable if
  $$\max\{\rho(A)\,:\,|A|\leq 2 \text{ or } A \text{ is a
  }3\text{-element subset of a hyperedge}\}\leq s<n.$$
\end{thm}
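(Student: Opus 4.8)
The plan is to mimic the structure of the proof of Theorem~\ref{thm:prepfortruncate}, but in the simpler setting where all $t_i=1$, so that every hyperedge of size at least two is an incidence set of $\rho$, and where the stronger hypothesis on $H$ (that $G_H$ is complete and that for each pair $p,q$ one can find a ``non-interacting'' pair $a\in X_p$, $b\in X_q$) replaces the more delicate case analysis driven by property~(H3) and condition~(T). Fix a decomposition $(N_1,\dots,N_k)\in\Delta_\sigma^k$. Since $\sigma(e)=\rho(e)=w(e)$ for each $e\in E$, the element $e$ has rank one in exactly $w(e)$ of the matroids $N_i$. For a hyperedge $X_h$ with $|X_h|>1$, hypotheses (i) and (ii) guarantee that $\sigma$ agrees with $\rho$ on all subsets of $X_h$ of size at most $3$; since $t_h=1$, equation~(\ref{eq:ranksgenh}) shows $X_h$ is an incidence set of $\sigma$, so by Lemma~\ref{lem:forceparallel}(1) there is exactly one $i\in[k]$ with $N_i|X_h=U_{1,X_h}$, and no two elements of $X_h$ are parallel in any other $N_j$. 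This lets me define a map $c:[n]\to[k]$ exactly as in the proof of Theorem~\ref{thm:prepfortruncate}: for $|X_h|>1$ set $c(h)$ to be that unique index, and for a singleton hyperedge $X_h=\{e\}$ let $c(h)$ be the unique index in $\{j\,:\,r_{N_j}(e)=1\}$ not already used by the larger hyperedges through $e$; the same counting argument as before (each $e$ lies in $w(e)$ hyperedges, and the larger ones are forced into $w(e)-1$ or $w(e)$ distinct colors) shows $c$ is well-defined.

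Next I would show $c$ is a proper coloring of $G_H$. Let $p\ne q$ with $X_p\cap X_q\ne\emptyset$; by (H2) the intersection is a single vertex $e$. Suppose for contradiction $c(p)=c(q)=i$ and that neither $X_p$ nor $X_q$ is the singleton $\{e\}$ (the singleton cases are immediate from the definition of $c$). Then $e$ is parallel, in $N_i$, to some element of $X_p\setminus e$ and to some element of $X_q\setminus e$; by the observation after equation~(\ref{eq:ranksgenh}), two elements parallel in $N_i$ lie in a common hyperedge with parameter $1$, which (since all $t_i=1$) is automatic, but the real point is that $X_p$ and $X_q$ are each parallel classes of $N_i$ sharing the element $e$, forcing $X_p\cup X_q$ to lie in a single parallel class of $N_i$. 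Now invoke the non-interaction hypothesis: choose $a\in X_p$, $b\in X_q$ with no hyperedge containing $\{a,b\}$. Then $\sigma(\{a,b\})=\rho(\{a,b\})=w(a)+w(b)$ by hypothesis~(i) and the observation after~(\ref{eq:ranksgenh}) (the $\delta=0$ case), so $a$ and $b$ are not parallel in \emph{any} $N_j$; but $a,b$ both lie in the common parallel class of $N_i$, a contradiction. Hence $c(p)\ne c(q)$, and $c$ is a coloring, so $k\ge\chi(G_H)=n$ because $G_H$ is complete on $[n]$.

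Finally I would bound $\sigma(E)$ from below. As in Theorem~\ref{thm:prepfortruncate}: if $|c^{-1}(i)|=1$ then $r(N_i)\ge 1$, and if $|c^{-1}(i)|\ge 2$, say $c(p)=c(q)=i$ with $p\ne q$, then $r(N_i)\ge 2$ — for if $r(N_i)=1$, pick $a\in X_p$, $b\in X_q$ with no hyperedge through $\{a,b\}$; they would be parallel in $N_i$, contradicting $\sigma(\{a,b\})=w(a)+w(b)$ again. Therefore $\sigma(E)\ge c_1+2c_2^+\ge c_1+c_2^+ = k\ge n$. This contradicts $\sigma(E)<n$, so no such decomposition exists and $\sigma$ is indecomposable. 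The ``in particular'' statement follows by taking $\sigma=T(\rho,s)$: for $s$ at least the displayed maximum, $T(\rho,s)$ agrees with $\rho$ on all sets of size at most $2$ and on all $3$-element subsets of hyperedges, and $T(\rho,s)(E)=s<n$, so $T(\rho,s)$ is indecomposable.

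The main obstacle is the coloring step: one must be careful that the non-interaction hypothesis is exactly what rules out $c(p)=c(q)$, since without property~(H3) or condition~(T) there is no circuit-elimination gymnastics available to force a contradiction; the argument instead leans entirely on producing, for each adjacent pair $p,q$ in $G_H$, a pair of elements whose rank in the polymatroid is \emph{additive}, which directly obstructs their lying in a common parallel class. Verifying that hypotheses~(i) and~(ii) suffice to make each $X_h$ an incidence set of $\sigma$ (not just of $\rho$) is the other point that needs care, but it is routine given equation~(\ref{eq:ranksgenh}).
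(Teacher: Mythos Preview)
Your proof is correct and follows essentially the same route as the paper's, but the paper's version is considerably more streamlined. The paper first observes that the hypotheses force every hyperedge to have at least two elements (so the singleton case you handle is vacuous), and then applies Lemma~\ref{lem:forceparallel}(2) directly: for distinct $p,q$, the non-interacting pair $a\in X_p$, $b\in X_q$ satisfies $\sigma(\{a,b\})=\sigma(a)+\sigma(b)$, and since $X_p\cap X_q\ne\emptyset$, part~(2) of that lemma immediately gives that the matroids in which $X_p$ and $X_q$ are parallel classes are distinct. Hence the assignment $p\mapsto(\text{that matroid})$ is injective, producing $n$ distinct matroids each of rank at least $1$, so $\sigma(E)\ge n$. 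You instead reprove Lemma~\ref{lem:forceparallel}(2) inline and carry along the $c_1+2c_2^+$ machinery from Theorem~\ref{thm:prepfortruncate}, which is superfluous here: since $G_H$ is complete, your coloring $c$ is injective, so $c_2^+=0$ and the bound collapses to $\sigma(E)\ge c_1=n$. (A minor slip: your equality $c_1+c_2^+=k$ should read $c_1+c_2^+=n$; the number of colors actually used need not equal $k$.)
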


\begin{proof}
  Assume that the multiset $D$ is a decomposition of $\sigma$.  Fix
  $p,q\in[n]$ with $p\ne q$.  The hypotheses imply that each of $X_p$
  and $X_q$ has at least two elements and is an incidence set of
  $\sigma$, so, by Lemma \ref{lem:forceparallel}, the elements of
  $X_p$ are parallel in exactly one matroid $N_i$ in $D$, and those in
  $X_q$ are parallel in exactly one matroid $N_j$ in $D$.  For
  elements $a\in X_p$ and $b\in X_q$ for which no hyperedge contains
  $\{a,b\}$, we have $\sigma(\{a,b\}) = \sigma(a)+ \sigma(b)$.  Since
  $X_p\cap X_q\ne \emptyset$, part (2) of Lemma
  \ref{lem:forceparallel} gives $i\ne j$.  Therefore
  $\sigma(E)\geq n$.  However, $\sigma(E) < n$.  Thus, $\sigma$ is
  indecomposable.
\end{proof}

We now return to projective planes.

\begin{ex:ppl2}
  Let $M$ be a projective plane of order $q$ on the set $E$ and let
  $H$ be the hypergraph $(E,\mathcal{E})$ in Example \ref{ex:projpl},
  so $|E|=|\mathcal{E}|=q^2+q+1$.  Let $\rho$ be given by equation
  (\ref{eq:genbpmh}) where each $t_i$ is $1$.  Let $\sigma$ be any
  polymatroid on $E$ for which $\sigma(A)=\rho(A)$ for all
  $A\subseteq E$ with $|A|\leq 3$.  We claim that if
  $\sigma(E)\leq q^2+q$, then $\sigma$ is indecomposable.  When $q=2$,
  each line $L$ has $|L|=3$ and $\rho(L)=7$, so, in order to have
  $\sigma(E)\leq q^2+q$, we assume that $q>2$.  Let the multiset $D$
  of matroids be a decomposition of $\sigma$.  For a subset $X$ of a
  line $L$ of $M$, we have $\rho(X) = 1+q|X|$.  It follows that $L$ is
  an incidence set of $\sigma$.  By Lemma \ref{lem:forceparallel}, the
  points in $L$ are parallel in exactly one matroid in $D$.  Any two
  lines of $M$ intersect in a point, so from $\sigma(E)\leq q^2+q$, it
  follows that there are distinct lines $L$ and $L'$ of $M$ for which
  the points in $L\cup L'$ are parallel in the same matroid $N_i$.
  For any point $a\in E-(L\cup L')$, some line $L''$ that contains $a$
  intersects $L$ and $L'$ in two different points; since the points in
  $L''$ are parallel in exactly one matroid in $D$, and two points in
  $L''$ are in $L\cup L'$ and so are parallel in $N_i$, it follows
  that $a$ is in that parallel class of $N_i$.  Thus, $N_i=U_{1,E}$.
  For $A\subseteq E$ with $|A|=3$, if $r_M(A)=2$, then
  $\sigma(A) = 1+3q = w(A)-2$, otherwise $\sigma(A) = 3q = w(A)-3$.
  Thus, $A$ is a circuit in some (necessarily unique) matroid in $D$
  if and only if $r_M(A)=3$.  Let $L=\{a_1,a_2,\ldots,a_{q+1}\}$ be a
  line of $M$.  For any $x\in E-L$, the circuits
  $\{a_1,a_2,x\},\{a_1,a_3,x\},\ldots,\{a_1,a_{q+1},x\}$ are contained
  in lines $L_2,L_3,\ldots,L_{q+1}$ of matroids
  $N_{j_2},N_{j_3},\ldots,N_{j_{q+1}}$ in $D$.  No three points in $L$
  are in such a line, so $j_2,j_3,\ldots,j_{q+1}$ are distinct.  Since
  $w(a_1)=q+1$, these $q$ matroids along with $N_i$ are the only ones
  in which $a_1$ has positive rank.  This applies for any $x\in E-L$,
  so $L_h=(E-L)\cup\{a_1,a_h\}$ for $2\leq h\leq q+1$.  Since $q>2$,
  the line $L_h$ of $N_{j_h}$ then has subsets $A$ with $|A|=3$ and
  $r_M(A)=2$, which is a contradiction, so $\sigma$ is indecomposable.
\end{ex:ppl2}

\section{Chromatic numbers and chromatic polynomials}\label{sec:chi}

After noting some basic properties of the chromatic number and
chromatic polynomial of a polymatroid, we prove our second main
result, Theorem \ref{thm:graphbehind2poly}: the chromatic polynomial
of a $2$-polymatroid is a rational multiple of the chromatic
polynomial of a graph.

Note that $\chi(\rho;x)$ is a polynomial since, when $k$ is a positive
integer, $\chi(\rho;k)$ is the following sum of multinomial
coefficients, which are polynomials in $k$:
\begin{equation}\label{eq:multinomialexpansion}
  \chi(\rho;k) = \sum_{i=\chi(\rho)}^{\rho(E)}\sum_{\{M_s\,:\,s\in[i]\}}
  \binom{k}{a_1,a_2,\ldots,a_h,k-i},
\end{equation}
where the inner sum is over all decompositions of $\rho$ with $i$
matroids, all of positive rank, and $a_1\geq a_2\geq \cdots \geq a_h$
are the multiplicities of the distinct matroids in the decomposition.
If $\rho$ is indecomposable, then the sum is empty.  If
$k<\chi(\rho)<\infty$, then the multinomial coefficients are zero.
The degree of $\chi(\rho;x)$ is the largest number of matroids of
positive rank in a decomposition of $\rho$.  If $\rho$ is the Boolean
polymatroid of a graph $G$ with no isolated vertices, then the degree
of $\chi(\rho;x)$ is the number of vertices of $G$.

The \emph{direct sum} $\rho_1\oplus\rho_2$ of polymatroids $\rho_1$
and $\rho_2$ on disjoint sets $E_1$ and $E_2$ is defined by extending
the definition for matroids: specifically, for
$X\subseteq E_1\cup E_2$,
$$(\rho_1\oplus\rho_2)(X) = \rho_1(X\cap E_1)+\rho_2(X\cap E_2).$$
Polymatroids that are not direct sums of other polymatroids are
\emph{connected}.  It follows that if
$\rho = r_{M_1}+ r_{M_2}+\cdots+ r_{M_k}$, then $\rho$ is disconnected
if and only if there is a set $X$ with
$\emptyset\subsetneq X\subsetneq E$ so that
$M_i= M_i|X\oplus M_i|(E-X)$ for all $i\in [k]$.  It is easy to see
that $\chi(\rho_1\oplus\rho_2) = \max\{\chi(\rho_1),\chi(\rho_2)\}$
and
\begin{equation}\label{eqn:chrodirsum}
  \chi(\rho_1\oplus\rho_2;x) = \chi(\rho_1;x)\cdot\chi(\rho_2;x).
\end{equation}

By equations (\ref{eq:mindelcon}), if $\rho'$ is a minor of $\rho$,
then $\chi(\rho')\leq \chi(\rho)$.  
(In
contrast, for a graph, such as an even cycle, we can have
$\chi(G/e)>\chi(G)$.)

\begin{example}\label{ex:paving}
  Unlike the chromatic polynomial of a graph, $\chi(\rho;x)$ need not
  be monic.
  This follows from an attractive
  observation in \cite[Section 3]{manoel}.  Let $M$ be a paving
  matroid of rank $r$ on $E$ with at least two hyperplanes that are
  cyclic (that is, unions of circuits).  Let $\rho$ be given by the
  decomposition $\{M,U_{r,E}\}$.  No proper, nonempty cyclic flat of
  $M$ contains another, so we can relax any subset of these flats,
  thereby turning all $r$-subsets of the cyclic hyperplanes that we
  relax into bases.  If we partition the set of cyclic hyperplanes of
  $M$ into two sets $\{X_1,X_2\}$ (allowing one to be empty), then the
  corresponding set $\{M_1, M_2\}$ of relaxations of $M$ is a
  decomposition of $\rho$.
  If $M$ has $s$ cyclic hyperplanes,
  then there are $2^{s-1}$ such decompositions $\{M_1,M_2\}$;
  furthermore, they are all of the decompositions of $\rho$ using
  matroids of positive rank.  Thus, $\chi(\rho;x) = 2^{s-1}x(x-1)$,
  which is not monic.  (This idea can be adapted to any matroid whose
  proper, nonempty cyclic flats are incomparable.)
\end{example}

The coefficients of $\chi(\rho;x)$ might not be integers.
For
example, if $M$ is a connected matroid on $E$, then the decompositions
of $\rho=r_M+r_M$ are $\{M,M,U_{0,E},\ldots,U_{0,E}\}$, so
$\chi(\rho;x) = x(x-1)/2$.

The next example shows that $\chi(\rho;x)$ need not be a scalar
multiple of the chromatic polynomial of a graph.

\begin{example}\label{ex:morethanchrompoly}
  Let $\rho$ be the $3$-polymatroid on $E=\{a,b,c,d\}$ in which each
  element has rank three, each pair of elements has rank five, and
  each set of three or four elements has rank six.  In any
  decomposition of $\rho$, each element has rank one in three
  matroids, each pair of elements is parallel in one matroid, and no
  matroid has a coloop.  We now determine all decompositions of $\rho$
  using matroids of positive rank.

  First assume that one of the matroids is $U_{1,E}$.  The other
  matroids in the decomposition have no coloops and no parallel
  elements, and their ranks, which all exceed one, add to five, so
  they are $U_{2,E}$ and $U_{3,E}$.  There is just one such
  decomposition of $\rho$.

  Now assume that the largest parallel class in any of the matroids
  has three elements.  One of the four options is $\{a,b,c\}$, in
  which case $\{a,d\}$, $\{b,d\}$, and $\{c,d\}$ are parallel classes
  in three different matroids.  Also, each of $a$, $b$, and $c$ is a
  non-loop and non-coloop in one more matroid, so the remaining
  matroid must be $U_{2,\{a,b,c\}}\oplus U_{0,\{d\}}$. There are four
  such decompositions of $\rho$, each with five different matroids of
  positive rank.

  If all parallel classes have size two, then each matroid in the
  decomposition of $\rho$ is $U_{1,X}\oplus U_{0,E-X}$ or
  $U_{1,X}\oplus U_{1,E-X}$ for some $X\subseteq E$ with $|X|=2$.
  This accounts for
  \begin{itemize}
  \item one decomposition with six matroids of positive rank,
  \item three decomposition with five matroids of positive rank,
  \item three decomposition with four matroids of positive rank, and
  \item one decomposition with three matroids of positive rank.
  \end{itemize}

  Thus, letting $(x)_i =x(x-1)\cdots(x-i+1)$, we have
  \begin{align*}
    \chi(\rho;k)= &\, (x)_6+7(x)_5+3(x)_4+2(x)_3\\
     =&\,\, x^6 - 8 x^5 + 18 x^4 + 4 x^3 - 49 x^2 + 34 x.
  \end{align*}
  This is not the chromatic polynomial of any graph.
\end{example}

The next result, which stands in contrast to Example
\ref{ex:morethanchrompoly}, supports the observation that
$2$-polymatroids have more in common with graphs than do
$k$-polymatroids for $k>2$.

\begin{thm}\label{thm:graphbehind2poly}
  If $\rho$ is a $2$-polymatroid, then $\chi(\rho;x)=s\cdot\chi(G;x)$
  for some graph $G$ and rational number $s$.
\end{thm}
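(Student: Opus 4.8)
The plan is to reduce to connected $2$-polymatroids and then to show that their matroid decompositions are controlled by a single graph. Since $\chi(\rho_1\oplus\rho_2;x)=\chi(\rho_1;x)\,\chi(\rho_2;x)$ by equation (\ref{eqn:chrodirsum}) and $\chi(G_1\sqcup G_2;x)=\chi(G_1;x)\,\chi(G_2;x)$, the set of rational multiples of chromatic polynomials of graphs is closed under products, so it suffices to treat connected $\rho$. Deleting an element of rank $0$ changes neither side, so we may assume every element of $\rho$ has rank $1$ or $2$; and if $\rho$ is indecomposable then $\chi(\rho;x)\equiv 0=0\cdot\chi(G;x)$ for any graph $G$. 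Hence we may assume $\rho$ is connected, decomposable, and has no element of rank $0$.

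For such a $\rho$, I would study its decompositions into matroids of positive rank. Say a submultiset $\{M_{i_1},\dots,M_{i_t}\}$ of a decomposition can be \emph{merged} if $r_{M_{i_1}}+\cdots+r_{M_{i_t}}$ is the rank function of a matroid; starting from any decomposition and repeatedly splitting a matroid whose rank function is a sum of two matroid rank functions of positive rank, one arrives at an \emph{unsplittable} decomposition. Record the merge structure of a decomposition $D=\{M_1,\dots,M_d\}$ by its \emph{conflict graph} $\Gamma(D)$, with vertices indexing $M_1,\dots,M_d$ and $i\sim j$ exactly when $\{M_i,M_j\}$ cannot be merged; then every decomposition of $\rho$ is obtained from an unsplittable one by merging the classes of a partition of its index set into sets independent in the conflict graph. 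The results of Lemos \cite{manoel1,manoel} and Lemos and Mota \cite{LemosMota} on the structure of connected $2$-polymatroids are what I would invoke to pin this down: they should yield that all unsplittable decompositions of $\rho$ use the same number of matroids --- necessarily $\deg\chi(\rho;x)$, by equation (\ref{eq:multinomialexpansion}) --- and have the same conflict graph up to isomorphism, call it $G$; that the number $N$ of unsplittable decompositions and the multiset of matroid multiplicities within one are independent of the choice; and that a set of matroids of an unsplittable decomposition can be merged simultaneously precisely when the corresponding vertex set is independent in $G$.

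Granting this, the theorem follows by bookkeeping in the falling-factorial basis $(x)_i=x(x-1)\cdots(x-i+1)$. Equation (\ref{eq:multinomialexpansion}) rearranges to $\chi(\rho;x)=\sum_i c_i\,(x)_i$ with $c_i=\sum_D 1/\prod_j a_j(D)!$, summed over decompositions $D$ of $\rho$ into $i$ matroids of positive rank, where the $a_j(D)$ are the multiplicities of the distinct matroids of $D$; likewise $\chi(G;x)=\sum_i a_i(G)\,(x)_i$, where $a_i(G)$ counts the partitions of $V(G)$ into $i$ nonempty independent sets. Using the correspondence above --- each unsplittable decomposition contributing one copy of the lattice of independent-set partitions of $G$, with the weights $1/\prod_j a_j!$ absorbing repeated-matroid multiplicities --- one obtains $c_i=s\cdot a_i(G)$ for every $i$, where $s=N/m$ with $m$ the product of the factorials of the matroid multiplicities in an unsplittable decomposition (indeed $c_d=N/m$ and $a_d(G)=1$, since partitioning $V(G)$ into $|V(G)|$ independent sets means taking singletons). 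Summing over $i$ gives $\chi(\rho;x)=s\cdot\chi(G;x)$.

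The substantive obstacle is the structural step: showing, from the cited work of Lemos and of Lemos and Mota, that the unsplittable decompositions of a connected $2$-polymatroid all share a conflict graph, and that every decomposition is a merge of an unsplittable one. I expect the delicate points to be that an unsplittable decomposition must in fact be maximal (so no smaller conflict graph can appear) and that distinct unsplittable decompositions contribute compatibly, with no decomposition arising as a merge of two of them in conflicting ways; verifying this is presumably where one uses that $\rho$ is a $2$-polymatroid rather than a general $k$-polymatroid --- the hypothesis that fails for Example \ref{ex:morethanchrompoly}. Once the merge structure is established, the falling-factorial computation is routine.
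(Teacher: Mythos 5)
Your reduction to connected, decomposable $\rho$ and the falling-factorial bookkeeping at the end are fine, and in the ``generic'' case (all decompositions equivalent in Lemos's sense) your picture coincides with the paper's: there is a unique finest decomposition, its conflict graph is the graph $G$, and $\chi(\rho;x)=\chi(G;x)/(m_1!\cdots m_h!)$. But the structural claim you defer to the literature --- that all unsplittable decompositions of a connected decomposable $2$-polymatroid have the same number of matroids and the same conflict graph, with every decomposition arising by merging independent sets of that one graph --- is false, and this is exactly where the real work of the theorem lies. The Boolean $2$-polymatroid of the $3$-cycle $C_3$ on edge set $E$ is a counterexample: it has two unsplittable decompositions, $\{U_{1,E},U_{2,E}\}$ (two matroids, conflict graph $K_2$, since the sum of those two rank functions gives an element of rank $2$ and neither matroid splits) and $\{U_{1,E-i}\oplus U_{0,i}\,:\,i\in E\}$ (three matroids, conflict graph $K_3$, since any two of the rank-one supports overlap). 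The degrees, the numbers of matroids, and the conflict graphs all differ, so your formula $s=N/m$ applied to either conflict graph gives the wrong answer; the true polynomial is $x(x-1)(x-2)+x(x-1)=x(x-1)^2$, the chromatic polynomial of a path on three vertices --- a graph that is the conflict graph of neither unsplittable decomposition. The same phenomenon occurs in the other exceptional case (a decomposition with one connected matroid and one two-component matroid), where the answer is $x(x-1)(x-2)+2x(x-1)=x^2(x-1)$.

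The theorem is nevertheless true because these exceptional situations can be completely classified: Lemos's Theorem 1 pins down all connected $2$-polymatroids with a non-preserving pair of decompositions (the $C_3$-type examples, up to $2$-sums), his Theorem 2 and Corollary 1 show that otherwise inequivalent decompositions force every decomposition to be equivalent to one with two matroids, and a further argument (the paper's Lemma \ref{not3}, via the mixing graph of Lemos and Mota) bounds the number of inequivalent decompositions in the remaining case. In each exceptional case one then checks by hand that the resulting sum of falling factorials happens to equal a chromatic polynomial of some graph --- a coincidence that no uniform ``single conflict graph'' argument can deliver. So the gap is not a delicate verification you flagged but left open; it is that the premise of your counting argument fails, and the proof genuinely requires the case analysis.
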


The paper of Lemos \cite{manoel} provides many tools that we use in
the proof, so we start by outlining his results and explaining how
they give Theorem \ref{thm:graphbehind2poly} for many
$2$-polymatroids.
This reduces the proof to treating
$2$-polymatroids with very special decompositions, which we address in
Lemma \ref{not3}, with the aid of results of Lemos \cite{manoel1} and
Lemos and Mota \cite{LemosMota}.

If $\rho$ is not decomposable, then Theorem \ref{thm:graphbehind2poly}
holds with $s=0$, so we may assume that $\rho$ is decomposable.  By
equation (\ref{eqn:chrodirsum}), we may assume that $\rho$ is
connected.

Lemos \cite{manoel} defines two decompositions of a $2$-polymatroid to
be \emph{equivalent} if one can be obtained from the other by applying
a sequence of the following operations or their inverses:
\begin{enumerate}
\item remove a matroid of rank zero and
\item replace a disconnected matroid $M\oplus N$ by
  $M\oplus U_{0,E(N)}$ and $N\oplus U_{0,E(M)}$.
\end{enumerate}

Assume, first, that all decompositions of $\rho$ are
equivalent. 
Let $D=\{M_i\,:\,i\in[n]\}$ be the unique decomposition of
$\rho$ so that, for all $i\in [n]$, exactly one connected component
$A_i$ of $M_i$ has positive rank.  Let $G$ be the graph on the vertex
set $[n]$ in which $ij$ is an edge if $i\ne j$ and
$A_i\cap A_j\ne \emptyset$.  We can replace
$M_{i_1}, M_{i_2},\ldots,M_{i_t}$ by a direct sum that reverses
operation (2) above if and only if the vertex set
$\{i_1, i_2,\ldots,i_t\}$ is independent in $G$.  Therefore, if
$m_1,m_2,\ldots,m_h$ are the multiplicities of the distinct matroids
in $D$, then $$\chi(\rho;x)=\frac{\chi(G;x)}{m_1!m_2!\cdots m_h!}.$$

Now assume that $\rho$ has inequivalent decompositions.  As defined by
Lemos \cite{manoel}, a pair
$(\{M_i\,:\,i\in[k]\}, \{N_j\,:\,j\in[h]\})$ of decompositions of
$\rho$ on $E$ is \emph{preserving} if for all $X\subseteq E$ with
$|X|>1$, there are as many integers $i \in [k]$ for which $X$ is a
circuit of $M_i$ as integers $j \in [h]$ for which $X$ is a circuit of
$N_j$; otherwise the (necessarily inequivalent) decompositions are
\emph{non-preserving}.  The Boolean $2$-polymatroid of the $3$-cycle
$C_3$ on the edge set $E$ has the non-preserving pair
$(\{U_{1,E},U_{2,E}\},\{U_{1,E-i}\oplus U_{0,i}\,:\,i\in E\})$.
Theorem 1 of \cite{manoel}, stated next as Theorem
\ref{thm:manoelthm1}, shows that, when combined with $2$-sums, this
accounts for all non-preserving pairs of decompositions of connected
$2$-polymatroids.  (Here, $2$-sums with $U_{1,2}$ just relabel points,
so Theorem \ref{thm:manoelthm1} includes $\rho_{C_3}$.)

\begin{thm}\label{thm:manoelthm1}
  Let $\rho$ be a connected $2$-polymatroid on $E$.  Fix a set
  $A=\{a,b,c\}$ with $A\cap E=\emptyset$. 
  A pair of
  decompositions of $\rho$ is non-preserving if and only if, for some
  partition $\{E_a, E_b, E_c \}$ of $E$ and connected matroids $P_a$,
  $P_b$, and $P_c$ with $E(P_i ) = E_i \cup i$ for each $i \in A$, the
  decompositions are
  $\{U_{1, A} \oplus_2 P_a \oplus_2 P_b \oplus_2 P_c, \,\, U_{2, A}
  \oplus_2 P_a \oplus_2 P_b \oplus_2 P_c\}$ and
  $\{(U_{1,\{i,j\}} \oplus_2 P_i \oplus_2 P_j) \oplus U_{0,A-\{i,j\}}
  \,:\, \{i,j\}\subsetneq A\}$.
\end{thm}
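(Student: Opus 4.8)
This is Theorem~1 of Lemos~\cite{manoel}, which the present paper quotes rather than reproves; the sketch below is the route I would take to prove it directly.

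\emph{The ``if'' direction.} Given a partition $\{E_a,E_b,E_c\}$ of $E$ into nonempty parts and connected matroids $P_a,P_b,P_c$ with $E(P_i)=E_i\cup i$, the first point is that every $2$-sum appearing in the statement is legitimate, since $i$ is a non-loop, non-coloop of the connected matroid $P_i$ (which has at least two elements) and also of $U_{1,A}$, $U_{2,A}$, and $U_{1,\{i,j\}}$. The heart of the verification is the identity, on the three-element set $A$,
\[
  r_{U_{1,A}}+r_{U_{2,A}}=\sum_{\{i,j\}\subsetneq A}r_{U_{1,\{i,j\}}\oplus U_{0,A-\{i,j\}}},
\]
checked by evaluating both sides on the subsets of $A$ of each cardinality. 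Starting from this, one shows that attaching $P_a$, then $P_b$, then $P_c$ by $2$-sums --- or, in the summand where the attachment vertex is already a loop, by adjoining loops on the corresponding $E_i$ --- preserves the identity; for a single attachment this is a short case check using the rank formula $r_{Q\oplus_2 P}(Y)=r_Q(Y_Q)+r_P(Y_P)-[\,i\in\cl_Q(Y_Q)\text{ or }i\in\cl_P(Y_P)\,]$, and attachments at distinct vertices commute. Hence both families decompose one and the same $2$-polymatroid $\rho$. To see the pair is non-preserving, pick a circuit $C_i$ of $P_i$ through $i$ for each $i$ and put $Z=(C_a-a)\cup(C_b-b)\cup(C_c-c)$; then $Z$ is a circuit of $U_{2,A}\oplus_2 P_a\oplus_2 P_b\oplus_2 P_c$ (built from the circuit $A$ of $U_{2,A}$ via the $2$-sum construction) but is a circuit of no matroid in either family, because $U_{1,A}$ has no circuit meeting all of $a,b,c$ and each summand $U_{1,\{i,j\}}\oplus U_{0,E_k}$ involves nontrivially only the two parts $E_i,E_j$; so the families have different circuit multiplicities at $Z$. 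Finally, equivalent decompositions induce the same circuit multiplicities on every set of size greater than one, so a non-preserving pair is automatically inequivalent.

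\emph{The ``only if'' direction.} This is the substance of the theorem, and where I expect the main difficulty. Start with a non-preserving pair of decompositions of the connected $\rho$, and choose a set $X$ with $|X|>1$ at which their circuit multiplicities differ, minimal with this property, so that every proper subset of $X$ has equal circuit multiplicities in the two families. Using that $\rho(Y)=\sum_M r_M(Y)$ is common to both families, and the local fact that a circuit $C$ of a matroid $M$ has $r_M(C)=r_M(C-e)=|C|-1$ for every $e\in C$, one argues that the discrepancy at $X$ can come only from a ``triangle'' exchange between the two families: in one family some matroid $M$ satisfies $M|X=U_{|X|-1,X}$, while in the other the maximal dependent proper subsets of $X$ are three circuits, lying in three distinct matroids, whose pairwise intersections partition $X$ --- the pattern of $U_{2,A}$ against $U_{1,A}$. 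Reading off the three ``legs'' of this triangle gives a partition into parts $E_a,E_b,E_c$ of a subset of $E$. One then shows that off the triangle the two families must agree, and that at every element or pair of elements outside the triangle there is a $1$- or $2$-separation respected by both families; repeatedly applying the $2$-sum operation peels $E$ down to the three-element core $A$, and the connected pieces stripped off at $a$, $b$, $c$ are the required $P_a$, $P_b$, $P_c$. Connectedness of $\rho$ forces the three parts to exhaust $E$.

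The hard point is thus the second direction, and inside it the passage from the purely local datum ``the circuit multiplicities of the single set $X$ disagree'' to the rigid global conclusion ``$\rho$ is a chain of $2$-sums onto a three-element core''. In particular, ruling out that the two decompositions differ in several unrelated ways --- so that the triangle is genuinely unique and everything else is shared --- is the technical core, and for that I would invoke Lemos's proof in \cite{manoel} rather than reconstruct it.
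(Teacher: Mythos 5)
This statement is Theorem~1 of Lemos \cite{manoel}; the present paper imports it as a black box with no proof, and your proposal correctly recognizes this and likewise defers the substantive ``only if'' direction to Lemos. Your verification sketch of the ``if'' direction --- checking the rank identity $r_{U_{1,A}}+r_{U_{2,A}}=\sum r_{U_{1,\{i,j\}}\oplus U_{0,A-\{i,j\}}}$ on subsets of $A$, propagating it through the $2$-sums, and exhibiting the circuit $Z=(C_a-a)\cup(C_b-b)\cup(C_c-c)$ that witnesses non-preservation --- is sound, so there is nothing in the paper itself to compare it against.
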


It follows that if $\rho$ has a pair of non-preserving decompositions,
then, up to adjoining rank-$0$ matroids, the decompositions of $\rho$
are those given above, and so
$$\chi(\rho;x)=x(x-1)(x-2)+x(x-1)=x(x-1)^2,$$
which is the chromatic polynomial of a path on three vertices.

We now focus on connected $2$-polymatroids that have inequivalent
decompositions but all pairs of decompositions are preserving.
Theorem 2 of \cite{manoel}, stated next as Theorem
\ref{thm:manoelthm2}, treats such $2$-polymatroids.

\begin{thm}\label{thm:manoelthm2}
  If $\rho$ has inequivalent preserving decompositions, then each
  decomposition of $\rho$ is equivalent to one that contains exactly
  two matroids.
\end{thm}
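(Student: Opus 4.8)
The plan is to reformulate the conclusion as a statement about a graph attached to each decomposition, and then to use the preserving hypothesis to rule out the only obstruction to that graph being bipartite. By hypothesis $\rho$ is a connected $2$-polymatroid, every pair of its decompositions is preserving, and $\rho$ has at least two inequivalent decompositions. As earlier, discard every rank-$0$ element (a loop in every matroid of every decomposition) and every rank-$0$ matroid, and repeatedly apply operation (2) to split each matroid into pieces each having a single positive-rank connected component. This gives, for each equivalence class, a \emph{finest} representative $D=\{M_i\,:\,i\in[n]\}$ in which $M_i$ has one positive-rank component $A_i$. As in the discussion preceding Theorem~\ref{thm:manoelthm1}, equivalent decompositions have the same finest representative, and the decompositions equivalent to $D$ are precisely those obtained by merging the $M_i$ along a partition of the conflict graph $G(D)$ (vertex set $[n]$, with $ij$ an edge when $A_i\cap A_j\neq\emptyset$) into independent sets. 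Since $\rho$ is connected and has more than one equivalence class, each $G(D)$ has at least two vertices and at least one edge (the $A_i$ cannot be pairwise disjoint, or else $\rho$ would be disconnected), so $D$ is equivalent to a decomposition with exactly two matroids if and only if $G(D)$ is bipartite. Thus the theorem is equivalent to the assertion that $G(D)$ is bipartite for every finest decomposition $D$ of $\rho$.

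To prove this, suppose instead that some finest $D$ has $G(D)$ containing an induced odd cycle through colours $i_1,\dots,i_{2\ell+1}$, and for each $s$ pick $e_s\in A_{i_s}\cap A_{i_{s+1}}$ (indices mod $2\ell+1$); lying in two positive-rank parts, $e_s$ has rank $2$. I would show that this configuration, together with the existence of an inequivalent decomposition of $\rho$, produces a non-preserving pair, contradicting the hypothesis. The first step is to use Lemma~\ref{lem:forceparallel} to show that in \emph{every} decomposition of $\rho$ the sets $A_{i_s}$ persist as parallel classes and the $e_s$ persist as rank-$2$ non-loops, so that a suitable minor of $\rho$ on the elements near the cycle is the Boolean $2$-polymatroid of an odd cycle, possibly glued along $2$-sums in the manner of Theorem~\ref{thm:manoelthm1} --- a polymatroid all of whose decompositions use at least three colours. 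The second step is to line up, around the cycle, the colours of $D$ with those of the given inequivalent decomposition: because an odd cycle admits no proper $2$-colouring, this matching fails by one somewhere along the cycle, and I would track this parity defect to a set $X$ with $|X|\geq 2$ that is a circuit of different numbers of the matroids of the two decompositions --- the same mechanism that makes the pair in Theorem~\ref{thm:manoelthm1} non-preserving, now propagated around the cycle. This contradicts the hypothesis that all pairs of decompositions of $\rho$ are preserving, so no finest $G(D)$ has an odd cycle.

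Hence $G(D)$ is bipartite for every finest $D$; a $2$-colouring of $G(D)$ lets us merge the $M_i$ into two matroids, giving a two-matroid decomposition equivalent to $D$, and since every decomposition of $\rho$ is equivalent to its unique finest representative, every decomposition of $\rho$ is equivalent to one with exactly two matroids.

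The normal-form reduction, the conflict-graph reformulation, and the final merging step are routine manipulations with operations (1) and (2); I expect the real obstacle to be the second step above, namely showing that an odd cycle in a conflict graph, in the presence of an inequivalent decomposition, genuinely violates the preserving condition. This amounts to an odd-cycle analogue of the structure result in Theorem~\ref{thm:manoelthm1}, and would require a careful analysis of how $2$-sums can be attached along the cycle, of how the junction elements $e_s$ interact with deletion and contraction, and of precisely which circuit changes multiplicity; Lemma~\ref{lem:forceparallel} would be the main tool for forcing the parallel-class structure at each stage.
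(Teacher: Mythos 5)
There is a genuine gap, and it sits exactly where you say you expect it to. First, a point of orientation: the paper does not prove this statement at all --- it is quoted verbatim as Theorem 2 of Lemos \cite{manoel}, so the ``proof'' you are being measured against is a substantial external argument, not a short in-paper derivation. Your reduction is reasonable: passing to the finest representative of each equivalence class, observing that mergers correspond to independent sets of the conflict graph $G(D)$, and concluding that the theorem is equivalent to the bipartiteness of $G(D)$ for every finest decomposition $D$. That framework is sound (and matches the discussion the paper gives before Theorem \ref{thm:manoelthm1} for the all-equivalent case).

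But the entire content of the theorem is the bipartiteness claim, and your argument for it is a plan, not a proof. Two specific points fail to go through as written. (a) From an odd cycle $i_1,\dots,i_{2\ell+1}$ with junction elements $e_s\in A_{i_s}\cap A_{i_{s+1}}$ you assert that ``a suitable minor of $\rho$ on the elements near the cycle is the Boolean $2$-polymatroid of an odd cycle, possibly glued along $2$-sums.'' This requires controlling $r_{M_{i_{s+1}}}(\{e_s,e_{s+1}\})$, which may be $2$ rather than $1$, so the $A_{i_s}$ need not restrict to parallel classes on the chosen elements; Lemma \ref{lem:forceparallel} only forces parallelism on sets that are incidence sets of $\rho$, which you have not verified here. (b) Even granting such a minor, you need to convert the hypothesized inequivalent decomposition of $\rho$ into a non-preserving pair \emph{for $\rho$ itself}: the preserving condition is a statement about multiplicities of circuits across all of $E$, and neither inequivalence nor non-preservation passes cleanly to or from minors. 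Your ``parity defect'' mechanism --- lining up colours of the two decompositions around the odd cycle and locating a set $X$ that is a circuit of different numbers of matroids in the two decompositions --- is precisely the hard combinatorial core of Lemos's theorem, and no construction of such an $X$ is given. As it stands the proposal establishes only the (routine) equivalence between the theorem and a bipartiteness statement, and defers the substance to a result you would still have to prove.
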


Corollary 1 of \cite{manoel}, stated next as Theorem
\ref{thm:manoelthm3}, further limits the options.

\begin{thm}\label{thm:manoelthm3}
  If a connected $2$-polymatroid $\rho$ has a decomposition in which
  at least four matroids have positive rank, then all decompositions
  of $\rho$ are equivalent.
\end{thm}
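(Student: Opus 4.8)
The plan is to prove the contrapositive: if $\rho$ is a connected $2$-polymatroid with two inequivalent decompositions, then every decomposition of $\rho$ has at most three matroids of positive rank. The key preliminary observation is that the multiset of positive-rank connected components of the matroids in a decomposition is an invariant of its equivalence class: operation (1) only inserts or deletes rank-$0$ matroids, which have no positive-rank components, and operation (2) replaces $M\oplus N$ by $M\oplus U_{0,E(N)}$ and $N\oplus U_{0,E(M)}$, merely redistributing the positive-rank components of $M\oplus N$ between the two new matroids. Since a decomposition has at most as many matroids of positive rank as its multiset of positive-rank components has members, it suffices to exhibit, in each equivalence class of decompositions of $\rho$, a representative whose matroids have at most three positive-rank components in total.

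Suppose first that some pair of decompositions of $\rho$ is non-preserving. Then Theorem \ref{thm:manoelthm1} describes \emph{all} decompositions of $\rho$: up to adjoining rank-$0$ matroids, each is one of the two explicit families there. Every matroid in those families is a rank-$0$ matroid or a $2$-sum of connected matroids, hence connected, so the two families have respectively two and three positive-rank connected components; thus every decomposition of $\rho$ has at most three matroids of positive rank.

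Now suppose that every pair of decompositions of $\rho$ is preserving while some pair is inequivalent. By Theorem \ref{thm:manoelthm2}, every decomposition of $\rho$ is equivalent to one of the form $\{M_1,M_2\}$, so by the invariant above it is enough to prove that $M_1$ and $M_2$ together have at most three positive-rank connected components. Assume not, and let $A_1,\dots,A_p$ and $B_1,\dots,B_q$ be the ground sets of the positive-rank components of $M_1$ and of $M_2$, with $p+q\geq 4$. Since $\rho$ is connected, the bipartite graph on $\{A_i\}\cup\{B_j\}$ with an edge whenever $A_i\cap B_j\neq\emptyset$ must be connected: a disconnection would furnish a proper nonempty subset $S$ of $E$ that is simultaneously a union of connected components of $M_1$ and of $M_2$ (the common loops being distributed either way), whence $\rho=\rho|S\oplus\rho|(E\del S)$. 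A connected bipartite graph on at least four vertices has at least three edges, so there are at least three elements of $E$ of $\rho$-rank $2$ lying in distinct intersections $A_i\cap B_j$; each such element is a non-loop in exactly two matroids of any decomposition of $\rho$. I would then argue, using that distinct connected components of a single matroid lie on disjoint ground sets, that the partition of the non-loop elements of $\rho$ into the supports of the positive-rank components, together with the matroid carried by each support, is forced by $\rho$ up to the moves (1) and (2); this makes every decomposition of $\rho$ equivalent to $\{M_1,M_2\}$, contradicting the existence of an inequivalent pair.

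The crux is this last rigidity step: when four or more positive-rank components are spread out so that their intersection graph is connected, one must rule out an ``exotic'' decomposition of $\rho$ that merges elements from different component supports into one connected matroid. The delicate point is controlling the handful of rank-$2$ elements (those in the intersections $A_i\cap B_j$), and here the hypothesis that every pair of decompositions of $\rho$ is preserving is the essential tool, since it lets one transfer circuit-multiplicity information between the hypothetical exotic decomposition and $\{M_1,M_2\}$.
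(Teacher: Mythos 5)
First, a point of reference: the paper does not prove this statement itself --- Theorem \ref{thm:manoelthm3} is quoted as Corollary 1 of Lemos \cite{manoel} --- so your attempt has to stand on its own. Much of your reduction is sound. The observation that the multiset of positive-rank connected components is an invariant of an equivalence class of decompositions is correct and gives the right bookkeeping. The non-preserving case does follow from Theorem \ref{thm:manoelthm1}: since ``preserving'' is defined by agreement of circuit-multiplicity counts, it is transitive, so any third decomposition would form a non-preserving pair with one of the two listed ones and hence be one of them, and those two families contribute only $2$ and $3$ positive-rank (connected) matroids. Theorem \ref{thm:manoelthm2} then correctly reduces the remaining case to two-element decompositions.

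The genuine gap is the final ``rigidity'' step, which you explicitly defer (``I would then argue\ldots''). In the preserving case you must show that a two-element decomposition $\{M_1,M_2\}$ whose matroids have at least four positive-rank components in total cannot coexist with an inequivalent decomposition. Nothing in your write-up establishes this: the connectivity of the bipartite intersection graph of the components only says the components overlap, not that the decomposition is determined up to moves (1) and (2), and invoking the preserving hypothesis ``to transfer circuit-multiplicity information'' is a plan, not an argument. This step is not a routine verification --- it is essentially the entire content of the theorem in the preserving case. The tool one would actually need is the mixing-graph analysis of Lemos and Mota \cite{LemosMota}: one would have to bound the number of components of the mixing graph $G_{M_1,M_2}$ when $M_1$ and $M_2$ together have at least four positive-rank components, which is the analogue of the paper's Lemma \ref{not3} (that lemma treats only the $1+2$ component configuration and already requires a lengthy case analysis). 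As written, your proof is an accurate reduction together with a statement of what remains to be proved, so it cannot be accepted as a proof of Theorem \ref{thm:manoelthm3}.
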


As shown above, in this case the conclusion of Theorem
\ref{thm:graphbehind2poly} holds.  Two cases remain: $\rho$ has
inequivalent decompositions and either
\begin{enumerate}
\item[(1)] all decompositions have exactly two matroids of positive
  rank, or
\item[(2)] some decomposition has a connected matroid and one with
  exactly two connected components.
\end{enumerate}
Option (1) was illustrated in Example \ref{ex:paving}. As in that
example, the chromatic polynomial of $\rho$ is a rational multiple of
the chromatic polynomial $x(x-1)$ of the complete graph $K_2$.

We turn to option (2).  In \cite[(2.12)]{manoel1}, Lemos proves that
when option (2) applies, only one decomposition of $\rho$ has a
disconnected matroid.  Building on results in \cite{LemosMota}, below
we complete the proof of Theorem \ref{thm:graphbehind2poly} by
analyzing how many decompositions such a connected $2$-polymatroid
has.

By \cite[Lemma 2.3]{LemosMota}, if $M_1$, $M_2$, $N_1$, $N_2$ are
matroids on $E$ with $r_{M_1}+r_{M_2} = r_{N_1}+r_{N_2}$ and this
polymatroid is connected, then
$\{r_{M_1}(A),r_{M_2}(A)\} = \{r_{N_1}(A),r_{N_2}(A)\}$ for each
subset $A$ of $E$.  Thus, if $M_1\ne M_2$, then $N_1\ne N_2$.  Also,
given $M_1$ and $M_2$, one can find all pairs of matroids $N_1$ and
$N_2$ for which $r_{M_1}+r_{M_2} = r_{N_1}+r_{N_2}$ by ``mixing'' in
the sense that there is a subset $\mathscr{R}$ of $2^E$ for which, for
$i\in[2]$,
\begin{equation}\label{eq:defmix}
r_{N_i}(A) =
\begin{cases}
  r_{M_i}(A) & \text{if } A\in\mathscr{R},\\
  r_{M_{3-i}} (A) & \text{if } A\notin\mathscr{R}.
\end{cases}
\end{equation}
This leads to the \emph{mixing graph} $G_{M_1,M_2}$ of $M_1$ and $M_2$
that is defined in \cite[Section 3]{LemosMota}:  
its vertex
set is $\mathscr{X}=\{A\subseteq E: r_{M_1}(A)\neq r_{M_2}(A)\}$, and
$AB$ is an edge of $G_{M_1,M_2}$ if and only if either
\begin{enumerate}
\item[(i)] $A\subsetneq B$ or $B\subsetneq A$, or
\item[(ii)] $A\cap B\notin \mathscr{X},A\cup B\notin \mathscr{X}$, and
  $|A\btu B|=2$.
\end{enumerate}
Let $\mathscr{R}$ be any union of the vertex sets of connected
components of $G_{M_1,M_2}$, and define $N_1$ and $N_2$ by equation
(\ref{eq:defmix}).  Lemos and Mota \cite[Lemma 3.1]{LemosMota} show
that $r_{N_1}$ and $r_{N_2}$ are the rank functions of matroids.  By
construction, $r_{M_1}+r_{M_2} = r_{N_1}+r_{N_2}$.  In \cite[Theorem
4.1]{LemosMota}, they show all matroids $N_1$ and $N_2$ with
$r_{M_1}+r_{M_2} = r_{N_1}+r_{N_2}$ are obtained in this way.  Note
that $\mathscr{X}-\mathscr{R}$ gives the same decomposition as
$\mathscr{R}$, but $N_1$ and $N_2$ are switched.

To address option (2), we prove in Lemma \ref{not3} below that if
$M_1$ is connected and $M_2$ has two connected components, $P$ and
$Q$, then the graph $G_{M_1,M_2}$ has at most two components.  Since
$\rho$ has inequivalent decompositions, $G_{M_1,M_2}$ has two
components.  That completes the proof of Theorem
\ref{thm:graphbehind2poly} since then $\rho$ has just two inequivalent
decompositions: if $\mathscr{R}$ is $\emptyset$ or $\mathscr{X}$,
equation (\ref{eq:defmix}) gives $M_1$ and $M_2$, otherwise it gives
two connected matroids, $N_1$ and $N_2$.  Thus, the decompositions of
$\rho$ into matroids of positive rank are $\{M_1,M_2\}$,
$\{M_1,(M_2|P)\oplus U_{0,Q}, (M_2|Q)\oplus U_{0,P}\}$, and
$\{N_1,N_2\}$, so
$$\chi(\rho;x)=2x(x-1)+x(x-1)(x-2) = x^2(x-1),$$  which is the
chromatic polynomial of a graph.

Below we shorten the notation, using $G$ to denote $G_{M_1,M_2}$, and
$r_1$ and $r_2$ to denote the rank functions of $M_1$ and $M_2$,
respectively.

\begin{lemma}\label{2symdif}
  If $A,B\in\mathscr{X}$ and $|A\btu B|\leq 2$, then $G$ contains an
  $A,B$-path.
\end{lemma}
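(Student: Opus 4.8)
The plan is to handle the two structural possibilities for the pair $A,B\in\mathscr{X}$ according to the size and nature of their symmetric difference, matching the edge definition of $G$. If $|A\btu B|\le 1$ then $A=B$ (an empty path) or one of $A,B$ covers the other, which is case (i) of the edge definition, so $AB$ is already an edge. So assume $|A\btu B|=2$. If $A\subsetneq B$ or $B\subsetneq A$ we are again in case (i). Otherwise $|A\btu B|=2$ with neither containing the other, which forces $|A\setminus B|=|B\setminus A|=1$; write $A\setminus B=\{x\}$ and $B\setminus A=\{y\}$ with $x\ne y$, and set $C=A\cap B=A\cup B\setminus\{x,y\}$ (here $A\cup B=A\cap B\cup\{x,y\}$). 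The issue is that $AB$ is an edge of type (ii) only if in addition $A\cap B\notin\mathscr{X}$ and $A\cup B\notin\mathscr{X}$, i.e.\ $r_1=r_2$ on both $A\cap B$ and $A\cup B$; if that holds we are done directly, so the real work is when at least one of $A\cap B$, $A\cup B$ lies in $\mathscr{X}$.

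First I would record the submodularity/monotonicity inequalities that constrain how $r_1-r_2$ can change along the chain $A\cap B\subseteq A,B\subseteq A\cup B$: adding a single element changes each $r_t$ by $0$ or $1$, so $r_t(A\cup B)-r_t(A\cap B)\in\{0,1,2\}$ for $t=1,2$, and similarly for the one-step moves $A\cap B\to A$, $A\to A\cup B$, etc. The goal is to produce an intermediate vertex, or a short detour, that connects $A$ to $B$ in $G$. I expect the argument to split on the value of $r_1(A\cap B)-r_2(A\cap B)$ and $r_1(A\cup B)-r_2(A\cup B)$. When $A\cap B\in\mathscr{X}$, then $A\cap B$ is a vertex and it is joined to each of $A,B$ by a type-(i) edge (it is a proper subset of both, since $A,B$ each have exactly one extra element), so $A-(A\cap B)-B$ is the desired path. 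Symmetrically, when $A\cup B\in\mathscr{X}$, the path $A-(A\cup B)-B$ works, both edges being of type (i). Thus the only remaining case is exactly when $A\cap B,A\cup B\notin\mathscr{X}$, which is precisely condition (ii), giving the edge $AB$ directly.

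So the proof is essentially a case check, and I do not anticipate a serious obstacle: the lemma is really just unpacking the edge definition. The one point requiring a line of care is the sub-case $|A\btu B|=2$ with neither set containing the other: there I must verify that $A\cap B$ is a proper subset of both $A$ and $B$ (true, since each of $A,B$ contains $A\cap B$ plus exactly one further element) and likewise that $A\cup B$ properly contains both, so that whenever either of $A\cap B$, $A\cup B$ happens to lie in $\mathscr{X}$, the relevant containments are strict and hence give genuine edges of type (i). With that observed, every case yields either a direct edge $AB$ or a path of length two through $A\cap B$ or $A\cup B$, completing the proof.
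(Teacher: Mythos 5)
Your proof is correct and is essentially the same as the paper's: both reduce to the observation that if $AB$ is not already an edge, then $|A\btu B|=2$ and one of $A\cap B$, $A\cup B$ lies in $\mathscr{X}$, giving a length-two path through that vertex via containment edges. The paragraph about submodularity inequalities is unnecessary, as your own case analysis shows.
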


\begin{proof}
  If $A=B$ or $AB$ is an edge, then the conclusion holds; otherwise
  $|A\btu B|=2$ and either $A\cup B$ or $A\cap B$ is in $\mathscr{X}$,
  so either $A(A\cup B)B$ or $A(A\cap B)B$ is a path in $G$.
\end{proof}

We will augment $G$ by adding the edge $AB$ for each pair of vertices
$A,B\in\mathscr{X}$ for which $|A\btu B|= 2$.  By the lemma, this does
not change the components of $G$, and it simplifies writing some of
the paths that we use below.

\begin{lemma}\label{not3}
  If $M_1$ is connected and $M_2$ has exactly two components, $P$ and
  $Q$, then $G$ has at most two components.
\end{lemma}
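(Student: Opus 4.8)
The plan is to analyze the structure of the mixing graph $G=G_{M_1,M_2}$ under the hypothesis that $M_1$ is connected while $M_2=M_2|P\oplus M_2|Q$ splits into exactly two components. I would begin by recording what the vertex set $\mathscr{X}=\{A\subseteq E:r_1(A)\ne r_2(A)\}$ looks like. Since $r_2(A)=r_2(A\cap P)+r_2(A\cap Q)$ while $r_1$ is the rank function of a connected matroid, the sets on which $r_1$ and $r_2$ agree are somewhat constrained; in particular, because $M_1$ is connected, for every nonempty proper subset $X$ of $E$ there is some set ``witnessing'' connectivity across the partition $\{P,Q\}$, i.e.\ some $A$ with $r_1(A)<r_1(A\cap P)+r_1(A\cap Q)$, and this should force $\emptyset$ and $E$ (or sets near them) to behave predictably. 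The first concrete step is therefore to pin down which of $\emptyset$, $P$, $Q$, $E$ lie in $\mathscr{X}$ and to show they all lie in a single component of $G$: typically one shows $\emptyset\notin\mathscr{X}$ and $E\notin\mathscr{X}$ (both ranks are $0$, resp.\ $r(M_1)=r(M_2)$ in a decomposition of a fixed $\rho$), and then studies the ``small'' vertices near them.

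Next I would set up a connectivity/exchange argument: given an arbitrary vertex $A\in\mathscr{X}$, I want to build a path in $G$ from $A$ to one of a fixed small set of ``reference'' vertices. The natural tool is the augmented graph (adding all edges $AB$ with $|A\btu B|=2$, as justified by Lemma~\ref{2symdif}), so that moving one element in or out of a set is a single edge whenever the intermediate sets stay in $\mathscr{X}$, and a two-step detour through $A\cup B$ or $A\cap B$ handles the case when they do not. Using submodularity of $r_1$ and the block decomposition of $r_2$, I would argue that from any $A$ one can reach a set that is ``aligned'' with the partition—say one of the form $P'\cup Q'$ with $P'\subseteq P$, $Q'\subseteq Q$ where each piece is ``$r_1$-closed'' or extremal in a suitable sense—and then that all such aligned vertices that remain in $\mathscr{X}$ fall into at most two classes according to which side ($P$ or $Q$) carries the rank discrepancy. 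The point is that $\mathscr{X}$ splits roughly as ``discrepancy lives over $P$'' versus ``discrepancy lives over $Q$,'' giving the bound of two components; when both types of discrepancy can be simultaneously present, the connectivity of $M_1$ lets one merge the two classes via a witnessing set that straddles $P$ and $Q$, which is exactly the mechanism that could drop the count to one but never raise it above two.

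The main obstacle I expect is the careful bookkeeping in the exchange/path-building step: showing that one can always decrease a set toward a reference vertex while staying inside $\mathscr{X}$, or detour via union/intersection when one cannot, without the process stalling at a vertex from which no legal move exists. This is where submodularity of $r_1$ together with $r_2(A)=r_2(A\cap P)+r_2(A\cap Q)$ must be pushed hard: one needs that if $A\in\mathscr{X}$ and $A$ is not already a reference vertex, then some single-element move lands in $\mathscr{X}$, or else $A\cup B$ or $A\cap B\in\mathscr{X}$ for a suitable nearby $B$. A second, subtler obstacle is ruling out a \emph{third} component: even after showing every vertex connects to an aligned one, one must verify that the ``$P$-discrepancy'' aligned vertices are all mutually connected (and likewise for $Q$), which again uses Lemma~\ref{2symdif} and the connectedness of $M_1|P$-type restrictions; here I would lean on the fact, recorded just before the lemma, that among these $2$-polymatroid decompositions $r_1(A)$ and $r_2(A)$ differ by at most the obvious amount, so the discrepancies are small integers and the aligned vertices form a well-behaved sublattice. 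Once these pieces are in place, the conclusion that $G$ has at most two components—and hence, since the decompositions are inequivalent, exactly two—follows, completing the proof of Theorem~\ref{thm:graphbehind2poly} as outlined above.
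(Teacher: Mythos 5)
There is a genuine gap: what you have written is a strategy outline, not a proof, and the load-bearing step is precisely the one you defer. Your plan rests on the claim that every vertex of $G$ can be walked to an ``aligned'' vertex of the form $P'\cup Q'$ and that the aligned vertices in $\mathscr{X}$ fall into at most two classes according to whether the rank discrepancy ``lives over $P$'' or ``over $Q$.'' This dichotomy is never defined precisely (for a general $A\in\mathscr{X}$ there is no canonical side of the partition carrying the discrepancy $r_1(A)-r_2(A)$), it is not proved, and it is not clear that it is true; yet it is the entire content of the lemma. You also acknowledge, but do not resolve, the two obstacles that actually constitute the difficulty: that the single-element exchange process might stall at a vertex with no legal move, and that the putative two classes might each be disconnected. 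The paper's argument does not establish any such dichotomy; it argues by contradiction from a hypothetical third component, choosing $X$ and $Y$ in distinct components of $G$ but in the same set $V_i=\{Z:r_i(Z)>r_{3-i}(Z)\}$ with $|X\btu Y|$, $|X|$, $|Y|$ minimized, and then extracts fine structure (each of $X$, $Y$ is a basis of $M_i|X\cup Y$ and a hyperplane of $M_{3-i}|X\cup Y$, with unique circuits $C_X$, $C_Y$ satisfying $X\cup Y=C_X\cup C_Y$), pins down $i=1$, and finally plays a third minimal vertex $W$ against $X$ and $Y$ to manufacture a forbidden path. None of that machinery, or a workable substitute, appears in your proposal.

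There is also a concrete error near the start: you assert that $E\notin\mathscr{X}$ because $r(M_1)=r(M_2)$ ``in a decomposition of a fixed $\rho$.'' That is false in general --- the two matroids in a decomposition need not have equal rank (e.g., $\rho=r_{U_{1,E}}+r_{U_{2,E}}$). Only the sum $r(M_1)+r(M_2)=\rho(E)$ is fixed. In the paper, $E\notin\mathscr{X}$ is not a standing fact but is \emph{derived} under the contradiction hypothesis, from the observation that $XEY$ would otherwise be a path joining the two chosen components; the resulting equality $r_1(E)=r_2(E)$ is then combined with $r_2(E)=r_2(P)+r_2(Q)<r_1(P)+r_1(Q)$ to place $P$ or $Q$ in $V_1$. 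Your reference-vertex scheme, which treats $\emptyset$, $P$, $Q$, $E$ as unconditionally predictable, starts from this false premise.
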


\begin{proof}
  Suppose, to the contrary, that $G$ has at least three components.
  Let
  $$V_1=\{Z\in\mathscr{X}\,:\,r_1(Z)>r_2(Z)\} \quad\text{ and }
  \quad V_2=\{Z\in\mathscr{X}\,:\,r_2(Z)>r_1(Z)\}.$$ Either $V_1$ or
  $V_2$ must contain vertices from different components of $G$.  Take
  $X,Y\in\mathscr{X}$ that are in different components of $G$ but in
  the same set $V_i$, for some $i\in[2]$, and so that
  $|X\btu Y|,|X|$, and $|Y|$ are minimized, in that order.

  Extend a basis of $M_i|X\cap Y$ to bases $B_X$ of $M_i|X$ and $B_Y$
  of $M_i|Y$.  Then $B_X,B_Y\in V_i$.  By minimality, $X=B_X$ and
  $Y=B_Y$.  For $y\in Y-X$, since $|(X\cup y)\btu Y|<|X\btu Y|$,
  minimality gives $X\cup y\not\in V_i$.  Thus, for all $y\in Y-X$,
  $$r_i(X)=r_i(X\cup y)=r_{3-i}(X\cup y)=r_{3-i}(X)+1.$$
  Thus, $X$ is a basis of $M_i|X\cup Y$ and a flat of
  $M_{3-i}|X\cup Y$.  The same holds for $Y$.  Now
  $r_{3-i}(X\cup Y)= r_i(X\cup Y)$ since $X(X\cup Y)Y$ is not a path
  in $G$, so $X$ and $Y$ are hyperplanes of $M_{3-i}|X\cup Y$, which
  has rank $|X|=|Y|$.  Thus, each of $M_{3-i}|X$ and $M_{3-i}|Y$
  contains exactly one circuit, say $C_X$ and $C_Y$, respectively.  If
  $x\in X-(C_X\cup C_Y)$, then $X-x$ and $Y-x$ would both be in $V_i$
  and $|(X-x)\btu (Y-x)|\leq |X\btu Y|$, contrary to the minimality of
  $|X\btu Y|$ or $|X|$.  Thus, $X\subseteq C_X\cup C_Y$, and by
  symmetry, $X\cup Y=C_X\cup C_Y$.  We summarize this information in
  the following sublemma.

  \begin{sublemma}\label{bch}
    The following assertions hold:
    \begin{enumerate}
    \item $X$ and $Y$ are bases of $M_i|X\cup Y$,
    \item $X$ and $Y$ are hyperplanes of $M_{3-i}|X\cup Y$,
    \item $C_X$ and $C_Y$ are the unique circuits of $M_{3-i}|X$ and
      $M_{3-i}|Y$, respectively, and
    \item $X\cup Y= C_X\cup C_Y$, so $X-Y\subseteq C_X$ and
      $Y-X\subseteq C_Y$.
    \end{enumerate}
  \end{sublemma}

  Now $XEY$ is not a path in $G$, so $E\notin\mathscr{X}$, and so
  $r_1(E)=r_2(E)$.  Therefore,
  \[r_2(P)+r_2(Q)=r_2(E)=r_1(E)<r_1(P)+r_1(Q).\] Thus, either $P$ or
  $Q$ is in $V_1$.  By symmetry, we may assume that $P\in V_1$.  Next
  we show that
  \begin{sublemma}\label{iis1}
    $i=1$.
  \end{sublemma}
  Suppose instead that $i=2$.  Let $X_P=X\cap P$, and likewise for
  $X_Q$, $Y_P$, and $Y_Q$.

  Since $M_2=M_2|P \oplus M_2|Q$, by~\ref{bch}(1) both $X_P$ and $Y_P$
  are bases of $M_2|X_P\cup Y_P$, as are $X_Q$ and $Y_Q$ for
  $M_2|X_Q\cup Y_Q$.  Thus, $|X|=|Y|$, $|X_P|=|Y_P|$, and
  $|X_Q|=|Y_Q|$.  Therefore $|X_R-Y_R|= |Y_R-X_R|$ for $R=P$ and
  $R=Q$.  Thus, since $X\ne Y$, for either $R=P$ or $R=Q$, there are
  elements $x_R\in X_R-Y_R$ and $y_R\in Y_R-X_R$.  Since $X_R$ and
  $Y_R$ are bases of $M_2|X_R\cup Y_R$, both $X_R\cup y_R$ and
  $Y_R\cup x_R$ are dependent in $M_2$.
  
  If $X_R$ and $Y_R$ were independent in $M_1$, then $X_R\cup y_R$ and
  $Y_R\cup x_R$ would be independent in $M_1$ by~\ref{bch}(2).  Then
  $X_R\cup y_R$ and $Y_R\cup x_R$ would be in $V_1$, but this
  contradicts minimality.  Thus, either $C_X\subseteq X_R$ or
  $C_Y\subseteq Y_R$ by~\ref{bch}(3). By symmetry, we may assume that
  $C_X\subseteq X_R$.  Then $X-Y\subseteq C_X-Y\subseteq X_R-Y_R$, so
  $X-Y=X_R-Y_R$.  Thus $|Y_R-X_R| = |X_R-Y_R|= |X-Y|= |Y-X|$, so
  $X\btu Y=X_R\btu Y_R$.

  We claim that $Y_R$ is dependent in $M_1$.  Suppose instead that
  $Y_R$ is independent in $M_1$, so $Y_R\cup x_R\in V_1$.  If $R=P$,
  then the set $X_Q = Y_Q$ is not empty since $XPY$ is not a path in
  $G$.  Since $C_X\subseteq X_P\subseteq P$, we have
  $Y_Q\subseteq C_Y$ by~\ref{bch}(4).  Thus, if $e \in Y_Q$, then
  $(Y\cup x_P)-e$ is independent in $M_1$.  Since $(Y\cup x_P)-e$
  contains $Y_P\cup x_P$, it is dependent in $M_2$; thus,
  $(Y\cup x_P)-e \in V_1$.  This gives the contradiction that
  $XC_XP(Y_P\cup x_P)((Y\cup x_P)-e)Y$ is a path in $G$ (the last
  adjacency is by Lemma~\ref{2symdif} and the note after it; here and
  in similar paths below, we omit mentioning the obvious adjustments
  when vertices are repeated, for instance, if $X=C_X$).  Thus, $R=Q$.
  Therefore $X_P=Y_P$.  Let $P'$ be a minimum-sized set in
  $\mathscr{X}$ for which $X_P\subseteq P'\subseteq P$.  If
  $P'\in V_2$, then, since $M_2=M_2|P\oplus M_2|Q$ and since $X_Q$ and
  $Y_Q$ are bases of $M_2|X_Q\cup Y_Q$, both $P'\cup X_Q$ and
  $P'\cup Y_Q$ are in $V_2$, but this gives the contradiction that
  $X(P'\cup X_Q)P'(P'\cup Y_Q)Y$ is a path in $G$.  Thus $P'\in V_1$.
  Now $X_P$ is independent in $M_2$, as well as in $M_1$ since
  $C_X\subseteq X_Q$.  Thus, by minimality, $P'$ is independent in
  $M_1$ and contains a unique circuit of $M_2$, and that circuit
  contains $P'-X_P$.  Thus, for any element $p\in P'-X_P$, both
  $(P'-p)\cup X_Q$ and $(P'-p)\cup Y_Q$ are independent in $M_2$.
  Also, $(P'-p)\cup X_Q$ and $(P'-p)\cup Y_Q$ contain $X$ and $Y$,
  respectively, and so are dependent in $M_1$; thus, these sets are in
  $V_2$.  Furthermore, $r_1(P'\cup X_Q)= r_1((P'\cup X_Q)-x_Q)$ since
  $x_Q\in X_Q-Y_Q\subseteq C_X$, but
  $r_2(P'\cup X_Q)> r_2((P'\cup X_Q)-x_Q)$ since
  $M_2=M_2|P\oplus M_2|Q$ and $X_Q$ is independent in $M_2|Q$.  Hence
  either $P'\cup X_Q$ or $ (P'\cup X_Q)-x_Q$ is in $\mathscr{X}$.
  Likewise, either $P'\cup Y_Q$ or $(P'\cup Y_Q)-y_Q$ is in
  $\mathscr{X}$.  Thus, one of the following is a path in $G$ by
  Lemma~\ref{2symdif}.
  \begin{itemize}
  \item $X((P'-p)\cup X_Q)(P'\cup X_Q)P'(P'\cup Y_Q)((P'-p)\cup Y_Q)Y$
  \item $X((P'-p)\cup X_Q)(P'\cup X_Q)P'((P'\cup Y_Q)-y_Q)((P'-p)\cup
    Y_Q)Y$
  \item $X((P'-p)\cup X_Q)((P'\cup X_Q)-x_Q)P'(P'\cup Y_Q)((P'-p)\cup
    Y_Q)Y$
  \item $X((P'-p)\cup X_Q)((P'\cup X_Q)-x_Q)P'((P'\cup
    Y_Q)-y_Q)((P'-p)\cup Y_Q)Y$
  \end{itemize}
  This contradiction proves that $Y_R$ is dependent in $M_1$.

  Thus, $C_Y\subseteq Y_R$.  Since $C_X\subseteq X_R$, we have
  $X\cup Y=C_X\cup C_Y\subseteq X_R\cup Y_R$, so $X=X_R$ and $Y=Y_R$.
  Now $XPY$ is not a path in $G$, so $R\ne P$.  Thus, $X=X_Q$ and
  $Y=Y_Q$, so $X\cup Y\subseteq Q$.

  We claim that $G$ has an $X,P$-path.  Since $P\in V_1$, some subset
  $P'$ of $P$ is independent in $M_1$ and is a circuit of $M_2$.  If
  $P'\cup X\in\mathscr{X}$, then $PP'(P'\cup X)X$ is a path in $G$, as
  claimed, so assume that $P'\cup X\notin\mathscr{X}$.  Since $P'$ and
  $X$ are subsets of different components of $M_2$, and $X$ is
  independent in $M_2$, the only circuit of $M_2|P'\cup X$ is $P'$.
  Thus, since $P'\cup X\notin\mathscr{X}$,
  $$r_1(P'\cup X)=r_2(P'\cup X) = |P'|+|X|-1.$$ It follows that
  $P'\cup X$ contains exactly one circuit of $M_1$, and that is $C_X$
  since $C_X\subseteq X$.  Thus, if $p\in P'$, then
  $$r_1((P'-p)\cup X)<r_1(P'\cup X)=r_2(P'\cup X) =r_2((P'-p)\cup
  X),$$ and if $x\in C_X$, then
  $$r_1(P'\cup (X-x))=r_1(P'\cup X)=r_2(P'\cup X) >r_2(P'\cup
  (X-x)).$$ Thus, $PP'(P'\cup (X-x))((P'-p)\cup X)X$ is a path in $G$
  by Lemma~\ref{2symdif}, as desired. 
    By symmetry, $G$ has a $Y,P$-path. This
  contradiction to $X$ and $Y$ being in different components of $G$
  completes the proof of~\ref{iis1}.

  \begin{sublemma}\label{notPnotQ}
    We have $X\cup Y\nsubseteq P$ and $X\cup Y\nsubseteq Q$.
  \end{sublemma}

  Indeed, $X\cup Y\nsubseteq P$ since $XPY$ is not a path in $G$.
  Also, $X(X\cup Y\cup P)Y$ is not a path in $G$, so
  $X\cup Y\cup P\notin\mathscr{X}$.  Hence
  $r_1(X\cup Y\cup P)=r_2(X\cup Y\cup P)$.  Conclusion~\ref{bch}(1)
  gives $r_1(X)=r_1(Y)=r_1(X\cup Y)$, while
  $r_2(X)=r_2(Y)=r_2(X\cup Y)-1$ by~\ref{bch}(2).  If $X\cup Y$ were
  contained in the component $Q$ of $M_2$, then we would have
  \[r_2(X\cup P)=r_2(Y\cup P)<r_2(X\cup Y\cup P)=r_1(X\cup Y\cup
    P)=r_1(X\cup P)=r_1(Y\cup P),\] so
  $X\cup P,Y\cup P\in\mathscr{X}$, and $X(X\cup P)P(Y\cup P)Y$ would
  be a path in $G$; this contradiction completes the proof of
  \ref{notPnotQ}.
  
  By~\ref{bch}(2), the sets $X-Y$ and $Y-X$ are cocircuits of
  $M_2|X\cup Y$, so each is a subset of either $P$ or $Q$.  Thus,
  $X_R-Y_R=\emptyset$ and $Y_S-X_S=\emptyset$ for some
  $R,S\in\{P,Q\}$.  We claim that $R\ne S$.  Suppose not, so
  $X_R=Y_R$.  Submodularity gives
  \begin{equation}\label{eq:submodrspq}
    r_1(X_P)+r_1(X_Q)\geq r_1(X)>r_2(X)=r_2(X_P)+r_2(X_Q)
  \end{equation}
  and, similarly, $ r_1(Y_P)+r_1(Y_Q)>r_2(Y_P)+r_2(Y_Q)$.  If $R=P$,
  then $X_P\notin \mathscr{X}$ since $XX_PY$ is not a path in $G$, and
  so inequality (\ref{eq:submodrspq}) gives $X_Q\in V_1$; similarly,
  $Y_Q\in V_1$.  Likewise, if $R=Q$, then $X_P\in V_1$ and
  $Y_P \in V_1$.  By \ref{notPnotQ}, both options contradict
  minimality, so, as claimed, $R\ne S$. 

  The order in the minimality assumption is the only asymmetry between
  $X$ and $Y$.  That plays no role below, so, of the two ways to
  have $R\ne S$, 
  we may assume that
  \begin{sublemma}\label{rspqincl}
    $Y_P\subsetneq X_P$ and $X_Q\subsetneq Y_Q$.  Thus, $X_P=C_X$ and
    $Y_Q=C_Y$ by \emph{\ref{bch}(4)}. 
  \end{sublemma}

  Since $G$ has at least three components, take $W\in \mathscr{X}$ so
  that $G$ has no $X,W$-path and no $Y,W$-path, and $|Y\btu W|$ and
  $|W|$ are minimized, in that order.  Then $W\in V_j$ for some
  $j\in[2]$.  Since $Y(Y\cap W)W$ is not a path,
  $Y\cap W\notin\mathscr{X}$. Likewise, $Y\cup W\notin\mathscr{X}$.
  Since $Y\cap W$ is independent in $M_1$, it is also independent in
  $M_2$.  The minimality assumption now implies that $W$ is
  independent in $M_j$ and has nullity one in $M_{3-j}$.  Also, $W-Y$
  is a subset of the unique circuit $C_W$ of $M_{3-j}|W$.  By the
  minimality of $|Y\btu W|$, if $y\in Y-W$, then
  $W\cup y\not\in \mathscr{X}$, so
  $r_j(W)=r_j(W\cup y)= r_{3-j}(W\cup y)=r_{3-j}(W)+1$.  Hence $W$ is
  a basis of $M_j|W\cup Y$ and, since $Y\cup W\notin\mathscr{X}$, a
  hyperplane of $M_{3-j}|W\cup Y$.

  We show that
  \begin{sublemma}\label{ywyq}
    $Y\btu W\subseteq Q$.  
  \end{sublemma}

  First note that $C_Y\not\subseteq Y\cap W$ since $C_Y$ is a circuit
  of $M_2$ while $Y\cap W$ is independent.  Thus,
  $C_Y\cap(Y-W)\ne\emptyset$, so $Q\cap (Y-W) \ne\emptyset$ by
  \ref{rspqincl}.

  Assume that $j=1$. The circuit $C_W$ is then a subset of $P$ or $Q$.
  Now $WC_WPC_XX$ is not a path in $G$, so $C_W\not\subseteq P$.
  Thus, $W-Y\subseteq C_W\subseteq Q$.  The cocircuit $Y-W$ of
  $M_2|Y\cup W$ is a subset of $P$ or $Q$. Since
  $Q\cap (Y-W) \ne\emptyset$, we get $Y\btu W\subseteq Q$.

  Now take $j=2$.  Fix $y\in Y-W$.  Since $W$ is a basis of
  $M_2|W\cup Y$, the set $W\cup y$ contains a unique circuit $C_y$ of
  $M_2$.  Now $W$ is a hyperplane of $M_1|W\cup Y$ and $C_W$ is the
  unique circuit of $M_1|W$, so $C_W$ is the unique circuit of
  $M_1|W\cup y$.  If there were an element $w$ in $(W-Y)-C_y$, then
  $(W-w)\cup y$ would contain the circuit $C_y$ of $M_2$, but contain
  no circuit of $M_1$ since $W-Y\subseteq C_W$, so
  $(W-w)\cup y\in V_1$.  This contradicts the minimality of
  $|Y\btu W|$ since there would be a path from $(W-w)\cup y$ to $W$ by
  Lemma~\ref{2symdif}.  Thus, $(W-Y)\cup y\subseteq C_y$.  The circuit
  $C_y$ of $M_2$ is a subset of either $P$ or $Q$, and so
  $(W-Y)\cup y$ is a subset of either $P$ or $Q$.  This holds for all
  $y\in Y-W$, so $(W-Y)\cup (Y-W)$ is a subset of either $P$ or $Q$.
  Now \ref{ywyq} follows since $(Y-W)\cap Q \ne\emptyset$.

  Now $Y\btu W\subseteq Q$ by \ref{ywyq}, so $(Y\cup W)\cap P= Y_P$.
  Now $X_P\not\subseteq \cl _2(Y_P)$ by \ref{bch}(2) and
  \ref{rspqincl}.  Thus, $X_P\not\subseteq \cl _2((Y\cup W)\cap P)$,
  and so $X_P\nsubseteq \cl _2 (Y\cup W)$ since $P$ is a component of
  $M_2$.

  Finally, $X_P\subseteq \cl_1(Y)$ by \ref{bch}(1).  Now
  $Y\cup W\cup X_P\notin\mathscr{X}$ since $Y(Y\cup W\cup X_P)W$ is
  not a path in $G$.  We now get $Y\cup W\in\mathscr{X}$ since, using
  $X_P\nsubseteq \cl _2 (Y\cup W)$, we have
  \[r_2(Y\cup W)<r_2(Y\cup W\cup X_P)=r_1(Y\cup W\cup X_P)=r_1(Y\cup
    W).\] Thus, $Y(Y\cup W)W$ is a path in $G$.  This contradiction
  completes the proof.
\end{proof}

\section{Duals and chromatic numbers}\label{sec:duality}

The \emph{$i$-dual} of an $i$-polymatroid $\rho$ on $E$ is the
$i$-polymatroid $\rho^*$ on $E$ that is given by 
\begin{equation}\label{def:kdual}
  \rho^*(X) = i\,|X|-\rho(E)+\rho(E-X)
\end{equation}
for $X\subseteq E$.  In this section, we show that under certain
conditions, an $i$-polymatroid and its $i$-dual have the same
chromatic number or the same chromatic polynomial.

Before treating the basic result in this section, Theorem
\ref{thm:gendualresult}, we note that if $i=1$, then $\rho$ is a
matroid and equation (\ref{def:kdual}) gives the dual matroid.  It is
easy to check that $(\rho^*)^* = \rho$, and that
$(\rho_{\del A})^* = (\rho^*)_{/A}$ and
$(\rho_{/A})^* = (\rho^*)_{\del A}$.  The $i$-dual depends on $i$.
That is relevant since an $i$-polymatroid is a $j$-polymatroid for
each $j\geq i$ and so has a $j$-dual.  The $i$-dual $\rho^{*_i}$ and
$j$-dual $\rho^{*_j}$ are related by
$\rho^{*_j}(X)= \rho^{*_i}(X)+(j-i)|X|$ for all $X\subseteq E$.

\begin{thm}\label{thm:gendualresult}
  Let $\rho^*$ be the $i$-dual of an $i$-polymatroid $\rho$ on
  $E$.
  If $\{M_s\,:\,s\in[i]\}$ is a
  decomposition of $\rho$, then $\{M^*_s\,:\,s\in[i]\}$ is a
  decomposition of $\rho^*$.  More generally, if $k\geq i$ and
  $\{M_s\,:\,s\in[k]\}$ is a decomposition of $\rho$, then $\rho^*$
  has a decomposition $\{M'_s\,:\,s\in[k]\}$ where, for all
  $s\in [k]$, deleting the loops and coloops from $M'_s$ gives the
  dual of $M_s$ with its loops and coloops deleted.  Thus, if
  $\min\bigl(\chi(\rho),\chi(\rho^*)\bigr)\geq i$, then
  $\chi(\rho)=\chi(\rho^*)$.
\end{thm}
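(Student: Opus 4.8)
The plan is to handle the case $k=i$ by a one-line computation with the dual-rank formula, to deduce the general case $k\ge i$ from it by passing through the $k$-dual of $\rho$ and correcting for the known difference between the $k$-dual and the $i$-dual, and finally to read off the statement about chromatic numbers. Recall the standard formula $r_{M^*}(X)=|X|+r_M(E-X)-r_M(E)$ for a matroid $M$ on $E$. Summing it over $s\in[k]$ for a decomposition $\{M_s\,:\,s\in[k]\}$ of $\rho$ gives $\sum_{s=1}^k r_{M^*_s}(X)=k\,|X|+\rho(E-X)-\rho(E)$ for all $X\subseteq E$. When $k=i$ the right-hand side is $\rho^*(X)$ by equation~(\ref{def:kdual}), so $\{M^*_s\,:\,s\in[i]\}$ is a decomposition of $\rho^*$; when $k>i$ the right-hand side is the $k$-dual $\rho^{*_k}(X)$ of $\rho$ regarded as a $k$-polymatroid, so $\{M^*_s\,:\,s\in[k]\}$ decomposes $\rho^{*_k}$, and it remains to descend from $\rho^{*_k}$ to $\rho^*=\rho^{*_i}$.

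For the general case I would use that $\rho$ is an $i$-polymatroid: then $\rho(e)\le i$ for each $e\in E$, and since $\rho(e)$ is the number of $s$ for which $e$ is not a loop of $M_s$, each $e$ is a loop of $M_s$ for at least $k-i$ values of $s$. Write $M_s=\hat M_s\oplus U_{0,L_s}\oplus U_{|C_s|,C_s}$, where $L_s$ and $C_s$ are the sets of loops and coloops of $M_s$ and $\hat M_s=M_s\del(L_s\cup C_s)$ has neither; then $M^*_s=\hat M^*_s\oplus U_{|L_s|,L_s}\oplus U_{0,C_s}$, so the coloops of $M^*_s$ are exactly the loops $L_s$ of $M_s$. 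Now choose, for each $e\in E$, a set of exactly $k-i$ indices $s$ with $e\in L_s$, and let $M'_s$ be the matroid obtained from $M^*_s$ by changing each such $e$ from a coloop into a loop; thus $M'_s=\hat M^*_s\oplus U_{0,P_s}\oplus U_{|Q_s|,Q_s}$ for a suitable partition $\{P_s,Q_s\}$ of $L_s\cup C_s$, so $M'_s$ is a matroid on $E$, and deleting its loops and coloops leaves $\hat M^*_s$, namely the dual of $M_s$ with its own loops and coloops deleted. Since turning a coloop at $e$ into a loop lowers the rank of every set containing $e$ by $1$ and changes no other rank, and since each $e$ is treated this way for exactly $k-i$ values of $s$, one obtains
$$\sum_{s=1}^k r_{M'_s}(X)=\sum_{s=1}^k r_{M^*_s}(X)-(k-i)|X|=\rho^{*_k}(X)-(k-i)|X|=\rho^*(X)$$
for all $X\subseteq E$, the last equality being the relation $\rho^{*_k}(X)=\rho^{*_i}(X)+(k-i)|X|$ noted before the theorem. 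Hence $\{M'_s\,:\,s\in[k]\}$ is the required decomposition of $\rho^*$.

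For the chromatic-number assertion, I would note that for every finite $k\ge i$ one has $\rho\in\mathcal{D}_k$ if and only if $\rho^*\in\mathcal{D}_k$: the construction above gives one implication, and applying it to $\rho^*$ together with $(\rho^*)^*=\rho$ gives the converse. Under the hypothesis $\min\bigl(\chi(\rho),\chi(\rho^*)\bigr)\ge i$, neither $\rho$ nor $\rho^*$ lies in any $\mathcal{D}_k$ with $k<i$, so $\rho$ and $\rho^*$ belong to exactly the same classes $\mathcal{D}_k$; comparing least indices (or noting that both collections are empty) yields $\chi(\rho)=\chi(\rho^*)$.

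I expect the only delicate point to be the bookkeeping in the general case: checking both that the modified matroids $M'_s$ really are matroids with the advertised loops and coloops, and that their rank functions sum \emph{exactly} to $\rho^*$ rather than merely agreeing on singletons. What makes this go through is precisely the $i$-polymatroid hypothesis, which forces each element to be a loop of at least $k-i$ of the matroids $M_s$, exactly the number of coloops that must be turned into loops among the $M^*_s$ to pass from $\rho^{*_k}$ to $\rho^*$.
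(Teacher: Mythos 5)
Your proposal is correct and follows essentially the same route as the paper: both proofs exploit the fact that each element is a loop of at least $k-i$ of the matroids $M_s$, and your operation of converting $k-i$ chosen coloops of each $M_s^*$ back into loops produces exactly the matroids $(M_s|E_s)^*\oplus U_{0,E-E_s}$ that the paper obtains by choosing sets $E_s$ containing all non-loops of $M_s$ with each element in exactly $i$ of them. The rank computation and the closing argument for $\chi(\rho)=\chi(\rho^*)$ also match the paper's.
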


\begin{proof}
  Since $\rho(e)\leq i$ for all $e\in E$, there are at most $i$
  integers $s\in[k]$ with $r_{M_s}(e)=1$.  Thus, since $k\geq i$,
  there are subsets $E_1,E_2,\ldots,E_k$ of $E$ such that (i) if
  $r_{M_s}(e)=1$, then $e\in E_s$, and (ii) each element of $E$ is in
  exactly $i$ of the sets $E_1,E_2,\ldots,E_k$.  For $X\subseteq E$,
  \begin{align*}
    \rho^*(X) = &\,\, i|X|-\rho(E)+\rho(E-X)\\
    = &\,\, \sum_{s=1}^k |E_s\cap X|-\sum_{s=1}^k
        r_{M_s}(E_s)+\sum_{s=1}^kr_{M_s}\bigl(E_s-(X\cap E_s)\bigr)\\
    = &\,\, \sum_{s=1}^k \Bigl(|E_s\cap X|-
        r_{M_s}(E_s)+r_{M_s}\bigl(E_s-(X\cap E_s)\bigr)\Bigr).
  \end{align*}
  The summand in the last line is the rank of $E_s\cap X$ in the dual
  of the restriction $M_s|E_s$, which is the rank of $X$ in the direct
  sum, $(M_s|E_s)^*\oplus U_{0,E-E_s}$.  This gives the claimed
  decomposition of $\rho^*$.  If $k=i$, then $E_s=E$ for all
  $s\in[i]$, so $M'_s=M^*_s$. Finally, when
  $\min\bigl(\chi(\rho),\chi(\rho^*)\bigr)\geq i$, what we proved
  applies to all decompositions of $\rho$ and $\rho^*$, so, since
  $(\rho^*)^*=\rho$, the last assertion follows.
\end{proof}

To see that, in the last assertion in Theorem \ref{thm:gendualresult},
the hypothesis is needed, let $E$ be a set with $|E|\geq 3$.
Let $\rho$ be the $2$-polymatroid on $E$ with $\rho(e)=2$ for all
$e\in E$ and $\rho(X)=3$ for all $X\subseteq E$ with $|X|>1$.  The
decomposition $\{U_{1,E},U_{2,E}\}$ of $\rho$ shows that
$\chi(\rho)=2$.  Fix $i$ with $i\geq 3$. In the $i$-dual,
$\rho^*(e)=i$ for all $e\in E$, so $\chi(\rho^*)\geq i$.

By Theorem \ref{thm:gendualresult}, if $i\leq j\leq k$ and an
$i$-polymatroid is in $\mathcal{D}_k$, then its $j$-dual is in
$\mathcal{D}_k$.  In particular, $\mathcal{D}_k$ is
$k$-dual-closed. The next result follows. 

\begin{cor}\label{cor:dualext}
  For $i\leq k$, if $\rho$ is an $i$-polymatroid that is an
  excluded-minor for $\mathcal{D}_k$, then, for each $j$ with
  $i\leq j\leq k$, the $j$-dual of $\rho$ is also an excluded-minor
  for $\mathcal{D}_k$, as is the $j$-polymatroid $\rho^j$ that is
  given by $\rho^j(X) = \rho(X)+(j-i)|X|$.
\end{cor}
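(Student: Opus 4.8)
The plan is to derive the corollary formally from Theorem~\ref{thm:gendualresult} once one auxiliary equivalence is in hand. I would first prove, as a lemma, the following \emph{shift equivalence}: for an $i$-polymatroid $\mu$ on any finite set and any $j$ with $i\le j\le k$, we have $\mu\in\mathcal{D}_k$ if and only if $\mu^j\in\mathcal{D}_k$, where $\mu^j(X)=\mu(X)+(j-i)|X|$. For the ``only if'' direction, write $\mu=r_{M_1}+\cdots+r_{M_k}$. Since $\mu(e)\le i$, exactly $\mu(e)\le i$ of the $M_s$ have $r_{M_s}(e)=1$, so $e$ is a loop of at least $k-i\ge j-i$ of the $M_s$; for each $e$ choose $j-i$ of those indices, and let $F_s\subseteq E$ be the set of $e$ for which $s$ was chosen. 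Then each $F_s$ consists of loops of $M_s$, and each element of $E$ lies in exactly $j-i$ of the sets $F_s$. Setting $M'_s=(M_s\del F_s)\oplus U_{|F_s|,F_s}$ gives a matroid on $E$ with $r_{M'_s}(X)=r_{M_s}(X)+|X\cap F_s|$, so $r_{M'_1}+\cdots+r_{M'_k}=\mu^j$ and $\mu^j\in\mathcal{D}_k$. For the ``if'' direction, a direct computation (the point is $j|X|-(j-i)|X|=i|X|$) gives $(\mu^j)^{*_j}=\mu^{*}$, the $i$-dual of $\mu$; hence $\mu^j\in\mathcal{D}_k$ forces $\mu^{*}\in\mathcal{D}_k$ by Theorem~\ref{thm:gendualresult} applied to the $j$-polymatroid $\mu^j$ (using $k\ge j$), and a second application of Theorem~\ref{thm:gendualresult} to the $i$-polymatroid $\mu^{*}$ (using $k\ge i$) gives $\mu=(\mu^{*})^{*}\in\mathcal{D}_k$.

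Next I would record the routine bookkeeping: a deletion or contraction of an $i$-polymatroid is again an $i$-polymatroid; $(\mu^j)_{\del A}=(\mu_{\del A})^j$ and $(\mu^j)_{/A}=(\mu_{/A})^j$; and, from the identities $(\mu_{\del A})^{*}=(\mu^{*})_{/A}$ and $(\mu_{/A})^{*}=(\mu^{*})_{\del A}$ together with $(\mu^{*})^{*}=\mu$, the minors of $\mu^{*}$ are precisely the $i$-duals of the minors of $\mu$. Likewise the minors of $\sigma^j$ are precisely the polymatroids $\nu^j$ for $\nu$ a minor of $\sigma$. In both correspondences the ground set is unchanged, so proper minors match proper minors.

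Now assemble. First, if $\sigma$ is an $i$-polymatroid that is an excluded minor for $\mathcal{D}_k$ and $i\le j\le k$, then $\sigma^j\notin\mathcal{D}_k$ (otherwise $\sigma\in\mathcal{D}_k$ by the shift equivalence), while each proper minor of $\sigma^j$ is $\nu^j$ for a proper minor $\nu$ of $\sigma$; such $\nu$ is an $i$-polymatroid in $\mathcal{D}_k$, so $\nu^j\in\mathcal{D}_k$ by the shift equivalence, and therefore $\sigma^j$ is an excluded minor for $\mathcal{D}_k$. Taking $\sigma=\rho$ handles the $\rho^j$ assertion. Second, by the same pattern $\rho^{*}$ is an excluded minor for $\mathcal{D}_k$: $\rho^{*}\notin\mathcal{D}_k$ (otherwise $\rho=(\rho^{*})^{*}\in\mathcal{D}_k$ by Theorem~\ref{thm:gendualresult}), and each proper minor of $\rho^{*}$ is the $i$-dual of a proper minor $\nu$ of $\rho$, which lies in $\mathcal{D}_k$, so its $i$-dual lies in $\mathcal{D}_k$ by Theorem~\ref{thm:gendualresult}. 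Finally, the $j$-dual of $\rho$ satisfies $\rho^{*_j}(X)=\rho^{*}(X)+(j-i)|X|$, that is, $\rho^{*_j}=(\rho^{*})^j$; since $\rho^{*}$ is an $i$-polymatroid that is an excluded minor for $\mathcal{D}_k$, the first assertion applied with $\sigma=\rho^{*}$ shows $\rho^{*_j}$ is an excluded minor for $\mathcal{D}_k$.

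Everything above is bookkeeping on top of Theorem~\ref{thm:gendualresult} except the ``only if'' half of the shift equivalence, which I expect to be the one genuine (if small) obstacle: one must realize $\mu^j$ as a sum of exactly $k$ matroids rather than as the naive $(k+j-i)$-term sum $\mu+(j-i)\,r_{U_{|E|,E}}$. The way around it is the slack observation used above: because $\mu$ is an $i$-polymatroid and $k\ge j$, each element is a loop in enough of the $k$ matroids that the $j-i$ extra coloops per element can be absorbed into the existing matroids, and the choices of the sets $F_s$ can be made independently element by element.
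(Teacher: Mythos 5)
Your proof is correct and follows essentially the same route as the paper, which derives the corollary from Theorem~\ref{thm:gendualresult} together with the identity $\rho^{*_j}=(\rho^{*_i})^j$ (stated just before that theorem) and the fact that duals and the shift operation commute with minors. The only place you diverge is the `only if' half of your shift equivalence, which you establish by a direct coloop-absorption construction; the paper gets this implicitly by applying Theorem~\ref{thm:gendualresult} twice (via $\mu^j=(\mu^{*_i})^{*_j}$), but your construction is the same element-distribution trick already used inside the proof of that theorem, so nothing genuinely different is involved.
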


The second conclusion uses the first, applied to $\rho^*$, and the
remarks before Theorem \ref{thm:gendualresult}.  Thus, many excluded
minors that we identify yield families of excluded minors.

\begin{cor}\label{cor:dualbijection}
  Let $\rho^*$ be the $i$-dual of an $i$-polymatroid $\rho$ on $E$.
  Assume that for all $e\in E$,
  \begin{itemize}
  \item[(1)] $\rho(E-e)=\rho(E)$; equivalently, $\rho^*(e)=i$, and
  \item[(2)] $\rho^*(E-e)=\rho^*(E)$; equivalently, $\rho(e)=i$.
  \end{itemize}
  There is a bijection $\phi:\Delta_\rho^k\to\Delta_{\rho^*}^k$,
  namely,
  $\phi\bigl((M_1,M_2,\ldots,M_k)\bigr)=(M'_1,M'_2,\ldots,M'_k)$
  where, letting $L_s$ be the set of loops of $M_s$, we have
  $M'_s=(M_s\del L_s)^*\oplus U_{0,L_s}$.
  Thus, $\chi(\rho;x)=\chi(\rho^*;x)$.
\end{cor}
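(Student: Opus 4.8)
The plan is to exhibit a two-sided inverse for $\phi$, obtained by running the construction of Theorem~\ref{thm:gendualresult} on $\rho^*$. We may assume $k\ge i$, since otherwise both $\Delta_\rho^k$ and $\Delta_{\rho^*}^k$ are empty: for every $e\in E$ we have $\rho(e)=\rho^*(e)=i>k$ by the hypotheses, so no $k$-decomposition exists (the case $E=\emptyset$ being trivial).

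First I would realize $\phi$ as a special case of the construction in Theorem~\ref{thm:gendualresult}. Fix $(M_1,M_2,\ldots,M_k)\in\Delta_\rho^k$ and let $L_s$ be the set of loops of $M_s$. Hypothesis (1) gives $\sum_{s=1}^k\bigl(r_{M_s}(E)-r_{M_s}(E-e)\bigr)=\rho(E)-\rho(E-e)=0$ for each $e\in E$; since every summand is nonnegative, $r_{M_s}(E)=r_{M_s}(E-e)$ for all $s$ and all $e$, so no $M_s$ has a coloop. As $M_s=(M_s\del L_s)\oplus U_{0,L_s}$ and a coloop of a direct summand is a coloop of the whole matroid, $M_s\del L_s$ has no coloop; hence $(M_s\del L_s)^*$ has no loop, and the loop set of $M'_s=(M_s\del L_s)^*\oplus U_{0,L_s}$ is exactly $L_s$. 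Hypothesis (2) gives $r_{M_s}(e)=1$ for exactly $i$ values of $s$, i.e., $e\notin L_s$ for exactly $i$ values of $s$; so the only choice of sets $E_1,\ldots,E_k$ meeting conditions (i) and (ii) of Theorem~\ref{thm:gendualresult} is $E_s=E\setminus L_s$, and the decomposition of $\rho^*$ that that theorem then produces is precisely $\phi\bigl((M_1,\ldots,M_k)\bigr)=(M'_1,\ldots,M'_k)$. Thus $\phi$ is well defined and maps $\Delta_\rho^k$ into $\Delta_{\rho^*}^k$.

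Next I would invoke symmetry: by the equivalences recorded in the statement and the identity $(\rho^*)^*=\rho$, hypotheses (1) and (2) for $\rho$ are exactly hypotheses (2) and (1) for $\rho^*$. So the same recipe defines a map $\psi:\Delta_{\rho^*}^k\to\Delta_\rho^k$, sending $(N_1,\ldots,N_k)$ to $(N'_1,\ldots,N'_k)$ with $N'_s=(N_s\del L'_s)^*\oplus U_{0,L'_s}$, where $L'_s$ is the loop set of $N_s$. Computing $\psi\bigl(\phi((M_1,\ldots,M_k))\bigr)$: here $N_s=M'_s$, whose loop set is $L_s$ by the previous paragraph, so $N_s\del L'_s=(M_s\del L_s)^*$ and $N'_s=\bigl((M_s\del L_s)^*\bigr)^*\oplus U_{0,L_s}=(M_s\del L_s)\oplus U_{0,L_s}=M_s$. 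Hence $\psi\circ\phi=\mathrm{id}_{\Delta_\rho^k}$, and by symmetry $\phi\circ\psi=\mathrm{id}_{\Delta_{\rho^*}^k}$, so $\phi$ is a bijection. Then $\chi(\rho;k)=|\Delta_\rho^k|=|\Delta_{\rho^*}^k|=\chi(\rho^*;k)$ for every positive integer $k$, and since both are polynomials in $k$ (Section~\ref{sec:chi}), $\chi(\rho;x)=\chi(\rho^*;x)$.

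The step to watch is the loop/coloop bookkeeping, and it is exactly where the two hypotheses enter: hypothesis (2) forces the auxiliary sets $E_s$ in Theorem~\ref{thm:gendualresult} to be canonical, so that $\phi$ genuinely is that theorem's construction (and hence lands in $\Delta_{\rho^*}^k$), while hypothesis (1) rules out coloops in the $M_s$, which is what makes $M'_s$ have the same loop set as $M_s$ and thus makes the construction reversible rather than interchanging the roles of loops and coloops. Beyond this, no serious obstacle remains.
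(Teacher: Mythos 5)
Your proof is correct and follows the same route as the paper, which simply cites Theorem \ref{thm:gendualresult} and its proof; you have filled in the details that the paper leaves implicit, namely that hypothesis (2) forces the auxiliary sets $E_s$ to be $E-L_s$ so that $\phi$ is exactly the theorem's construction, and that hypothesis (1) rules out coloops so the construction is an involution up to swapping $\rho$ and $\rho^*$. The handling of $k<i$ and the polynomial-identity step at the end are also sound.
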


\begin{proof}
  By the assumptions, $\min\bigl(\chi(\rho),\chi(\rho^*)\bigr)\geq i$.
  The result now follows from Theorem \ref{thm:gendualresult} and its
  proof.
\end{proof}

\begin{ex:bpm2}
  The $2$-dual of the Boolean polymatroid $\rho$ of a graph $G=(V,E)$
  with no isolated vertices is given by 
  \begin{equation}\label{eq:interpretgraph2dual}
    \rho^*(A) = 2|A|-\rho(E)+\rho(E-A) =2|A|-\bigl(|V|-\rho(E-A)\bigr),
  \end{equation}
  for $A\subseteq E$.  The difference $|V|-\rho(E-A)$ is the number of
  vertices that are incident only with edges in $A$.  This comes under
  the construction in Section \ref{sec:genboolh} if $G$ has no loops
  and all vertices of $G$ have degree at least two: for the set $X_i$
  of edges that are incident with vertex $v_i$ of $G$, take
  $t_i = |X_i|-1$.  The sum in equation (\ref{eq:genbpmhrk}) is then
  $$\sum_{i=1}^n\min\{|A\cap X_i|,|X_i|-1\},$$
  and each term is $|A\cap X_i|$ unless all edges incident with $v_i$
  are in $A$, in which case the term is reduced by $1$.  Thus, the sum
  is $2|A|-\bigl(|V|-\rho(E-A)\bigr)$ since each edge is in two sets
  $X_i$.  The equality $\rho(E-e)=\rho(E)$ in condition (1) of
  Corollary \ref{cor:dualbijection} holds when neither endvertex of
  $e$ has degree $1$; the equality $\rho(e)=2$ in condition (2)
  excludes loops.  Thus, Corollary \ref{cor:dualbijection} applies to
  Boolean polymatroids of graphs that have no loops and no vertices of
  degree less than two.
\end{ex:bpm2}

\section{$k$-quotient polymatroids}\label{sec:quo}

In this section, we consider \emph{$k$-quotient polymatroids}, that
is, polymatroids of the form $\rho = r_{M_1}+ r_{M_2}+\cdots+ r_{M_k}$
where $k\geq 2$ and each $M_{i+1}$, for $i\in[k-1]$, is a quotient of
(perhaps equal to) $M_i$. 
 We start by recalling the background on
quotients that we need.

For matroids $Q$ and $L$ on a set $E$, the matroid $Q$ is a
\emph{quotient} of $L$, or $L$ is a \emph{lift} of $Q$, if there is
some matroid $M$ and set $A\subseteq E(M)$ for which $L=M\del A$ and
$Q=M/A$. It follows from the definition and from properties of minors
that if $Q$ is a quotient of $L$ and if $X$ is a subset of their
common ground set, then $Q\del X$ is a quotient of $L\del X$, and
likewise for contractions.  We will use the next lemma
\cite[Proposition 7.4.7]{Brylawski}.
  
\begin{lemma}\label{lem:equivquo}
  For matroids $Q$ and $L$ on $E$, the following are equivalent:
  \begin{enumerate}
  \item $Q$ is a quotient of $L$;
  \item $L^*$ is a quotient of $Q^*$;
  \item the difference $r_L-r_Q$ is non-decreasing, that is, for all
    sets $X\subseteq Y\subseteq E$, we have
    $r_L(X)-r_Q(X)\leq r_L(Y)-r_Q(Y)$, or, equivalently,
    \begin{equation}\label{eq:quotientrankineq} 
      r_Q(X\cup e)-r_Q(X)\leq r_L(X\cup e)-r_L(X)
    \end{equation}
    for all $X\subsetneq E$ and $e\in E-X$.
  \end{enumerate}
\end{lemma}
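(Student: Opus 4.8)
The plan is to establish $(1)\Rightarrow(3)$ and $(3)\Rightarrow(1)$, to obtain $(1)\Leftrightarrow(2)$ from the behaviour of duality under minors, and to note that the two phrasings inside item~(3) are interchangeable. For the latter: inequality (\ref{eq:quotientrankineq}) is the case $Y=X\cup e$ of ``$r_L-r_Q$ is non-decreasing,'' and conversely, for $X\subseteq Y$ one chooses a maximal chain $X=Z_0\subsetneq Z_1\subsetneq\cdots\subsetneq Z_m=Y$ and sums (\ref{eq:quotientrankineq}) over the steps. For $(1)\Leftrightarrow(2)$: if $L=M\del A$ and $Q=M/A$, then, since $(M\del A)^*=M^*/A$ and $(M/A)^*=M^*\del A$, we have $L^*=M^*/A$ and $Q^*=M^*\del A$, so $L^*$ is a quotient of $Q^*$, witnessed by $M^*$ and $A$; the converse is the same assertion with $M^*$ in place of $M$.

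For $(1)\Rightarrow(3)$: writing $L=M\del A$ and $Q=M/A$, we have $r_L(X)-r_Q(X)=r_M(X)-r_M(X\cup A)+r_M(A)$ for all $X\subseteq E$, so for $X\subseteq Y\subseteq E$ it suffices to show $r_M(X)+r_M(Y\cup A)\leq r_M(Y)+r_M(X\cup A)$. This is submodularity of $r_M$ applied to the sets $Y$ and $X\cup A$, using $Y\cup(X\cup A)=Y\cup A$ together with $r_M\bigl(Y\cap(X\cup A)\bigr)\geq r_M(X)$, as $X\subseteq Y\cap(X\cup A)$.

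The heart of the argument is $(3)\Rightarrow(1)$, which I would prove by an explicit construction. Set $k=r_L(E)-r_Q(E)$; applying (3) to $\emptyset\subseteq X\subseteq E$ gives $0\leq r_L(X)-r_Q(X)\leq k$, and in particular $k\geq 0$. Let $A$ be a $k$-element set disjoint from $E$ and, writing $Y_E=Y\cap E$ and $Y_A=Y\cap A$ for $Y\subseteq E\cup A$, define
\[
  r_M(Y)=\min\bigl\{\,r_L(Y_E)+|Y_A|,\ \ r_Q(Y_E)+k\,\bigr\}.
\]
It is routine that $r_M$ is normalized, non-decreasing, and satisfies the unit-increase property, since the pointwise minimum of two functions with these properties again has them. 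Because $r_L(X)-r_Q(X)\leq k$, the first term wins for $X\subseteq E$, so $r_M(X)=r_L(X)$ and $M\del A=L$; and since $r_M(A)=k$ and (because $r_Q\leq r_L$) $r_M(X\cup A)=r_Q(X)+k$ for $X\subseteq E$, we get $M/A=Q$. Thus $Q$ is a quotient of $L$, once $M$ is known to be a matroid.

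The one point that is not routine — and the \emph{main obstacle} — is submodularity of $r_M$, since the pointwise minimum of two submodular functions need not be submodular. I would verify it by a case analysis according to which of the two terms attains the minimum at $Y\cup Z$ and at $Y\cap Z$, using submodularity of $r_L$ and of $r_Q$, modularity of $Y\mapsto|Y_A|$, and the key fact that $Y\mapsto r_L(Y_E)-r_Q(Y_E)+|Y_A|$ is non-decreasing (this is precisely where hypothesis (3) is used, beyond $r_Q\leq r_L$). That monotonicity forces the troublesome case — where the two terms switch roles between $Y\cap Z$ and $Y\cup Z$ — to collapse, making the two functions agree throughout the interval from $Y\cap Z$ to $Y\cup Z$; the remaining cases reduce to submodularity of $r_L$ alone, of $r_Q$ alone, or to the ``cross'' inequality $\bigl(r_L(Y_E)+|Y_A|\bigr)+\bigl(r_Q(Z_E)+k\bigr)\geq\bigl(r_Q((Y\cup Z)_E)+k\bigr)+\bigl(r_L((Y\cap Z)_E)+|(Y\cap Z)_A|\bigr)$ and its $Y\leftrightarrow Z$ mirror, each a consequence of submodularity of $r_Q$ and the monotonicity of $r_L-r_Q$. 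An alternative I would keep in reserve, trading this calculation for an induction on $k$, is to treat $k=1$ via the modular cut $\mathcal{M}=\{F:F\text{ a flat of }L,\ r_Q(F)<r_L(F)\}$ (upward closed among flats by (3); closed under meets of modular pairs by a short computation with submodularity of $r_Q$), whose single-element extension $L^+$ satisfies $L^+\del p=L$ and $L^+/p=Q$, and then for $k\geq 2$ to interpose the elementary Higgs lift $Q_1$ with $r_{Q_1}=\min\{r_L,\,r_Q+1\}$ (again a matroid, by a similar but shorter case analysis), using $r(Q_1)=r(Q)+1$ and that $r_L-r_{Q_1}$ and $r_{Q_1}-r_Q$ are non-decreasing, and finish by the elementary case, induction, and transitivity of the quotient order.
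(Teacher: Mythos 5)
Your proposal is correct, but note that the paper does not prove this lemma at all: it is quoted as a known result, with a citation to Brylawski's Proposition 7.4.7 in the \emph{Theory of Matroids} chapter on constructions. What you have written is, in effect, a self-contained proof of that cited fact, and it is the standard one: the implications $(1)\Leftrightarrow(2)$ via $(M\del A)^*=M^*/A$ and $(1)\Rightarrow(3)$ via submodularity of $r_M$ applied to $Y$ and $X\cup A$ are exactly right, and your construction for $(3)\Rightarrow(1)$ is the Higgs major $r_M=\min\{r_L(\,\cdot\,\cap E)+|\,\cdot\,\cap A|,\ r_Q(\,\cdot\,\cap E)+k\}$ with $k=r(L)-r(Q)$. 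You correctly identify the only genuinely delicate point, namely submodularity of this minimum, and your case analysis does go through: if both minima at $Y\cup Z$ and $Y\cap Z$ are attained by the same term, monotonicity of the difference propagates that choice to $Y$ and $Z$ and one submodular inequality suffices; if the first term wins at $Y\cup Z$ and the second at $Y\cap Z$, monotonicity forces the two terms to coincide on the whole interval; and in the remaining case the cross inequality follows from submodularity of one term plus monotonicity of the difference, exactly as you say. The restriction and contraction computations ($r_L\le r_Q+k$ on $E$ and $r_Q\le r_L$, both instances of (3)) are also correct. So there is no gap; the only caveat is that the submodularity verification is sketched rather than written out in full, but the sketch contains all the needed ingredients and the cases close as claimed.
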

  
It follows that the class of $k$-quotient polymatroids, which we
denote by $\mathcal{Q}_k$, is closed under taking minors and
$k$-duals.  A subclass of $\mathcal{Q}_2$, motivated by \cite{GWRota},
was introduced by D.~Chun in \cite{deb} by imposing the additional
condition that $r(M_1)-r(M_2)\leq 1$.

From inequality (\ref{eq:quotientrankineq}), it follows that for
$\rho\in \mathcal{Q}_k$, we obtain the rank functions $r_{M_i}$
recursively from $\rho$ as follows: $r_{M_i}(\emptyset) = 0$ and, for
$X\subsetneq E$ and $y\in E-X$,
\begin{equation}\label{eq:reconr}
  r_{M_i}(X\cup y) =
  \begin{cases} r_{M_i}(X)+1, & \text{if } \rho(X\cup y)\geq \rho(X)+i,\\
    r_{M_i}(X), & \text{otherwise.}
  \end{cases}
\end{equation} 
Thus, each polymatroid $\rho$ in $\mathcal{Q}_k$ has exactly one
decomposition $\{M_i\,:\,i\in[k]\}$ for which each $M_{i+1}$, for
$i\in[k-1]$, is a quotient of $M_i$.  There may be other
decompositions of $\rho$ that do not satisfy this condition.
For instance, the
Boolean $2$-polymatroid of the $3$-cycle on the edge set $E$ has two
decompositions, $\{U_{2,E},U_{1,E}\}$ and
$\{U_{1,E-x}\oplus U_{0,x}\,:\,x\in E\}$, and only the former
satisfies the quotient condition.

We now give the excluded minors for $\mathcal{Q}_k$, which are
illustrated in Figure \ref{fig:rankincr}.

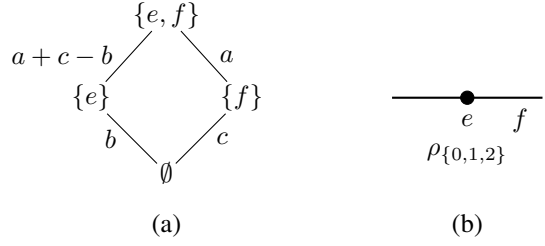
\begin{figure}
  \centering
  \begin{tikzpicture}[scale=1]
  \node[inner sep = 0.3mm] (b) at (1,-0.7) {$\emptyset$}; 
  \node[inner sep = 0.3mm] (l) at (0,0.3) {$\{e\}$}; 
  \node[inner sep = 0.3mm] (r) at (2,0.3) {$\{f\}$}; 
  \node[inner sep = 0.3mm] (t) at (1,1.4) {$\{e,f\}$}; 
  \draw(b) to node[right]{ \ $c$} (r);
  \draw(b) to node[left]{$b$ \ } (l);
  \draw(t) to node[right]{ \ $a$} (r);
  \draw(t) to node[left]{$a+c-b$ \ } (l);   
  \node at (1,-1.4) {(a)};
  \draw[thick](4,0.3)--(6,0.3);
  \filldraw (5,0.3) circle  (2.5pt);
  \node at (5,0) {$e$};
  \node at (5.7,0) {$f$};
  \node at (5,-0.45){$\rho_{\{0,1,2\}}$};
  \node at (5,-1.4) {(b)};
  \end{tikzpicture}
  \caption{(a) \ The rank increases in the polymatroid $\rho_A$ that
    is defined in Theorem \ref{thm:quotkpolyExMin}.  (b) \ The
    polymatroid arising from $\{0,1,2\}$.}
  \label{fig:rankincr}
\end{figure}

\begin{thm}\label{thm:quotkpolyExMin}
  Fix $k\geq 2$. 
  Within the class
  of $k$-polymatroids, $\mathcal{Q}_k$ has $\binom{k+1}{3}$ excluded
  minors, given as follows: for each subset $A=\{a,b,c\}$ of
  $\{0,1,2,\ldots,k\}$ with $a<b<c$, define $\rho_A$ on $\{e,f\}$ by
  $\rho_A(\emptyset)=0$, \ $\rho_A(e) = b$, \ $\rho_A(f) = c$, and
  $\rho_A(\{e,f\}) = a+c$.
\end{thm}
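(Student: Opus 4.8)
The plan is to verify two things for each candidate $\rho_A$: first, that $\rho_A$ is a valid $k$-polymatroid but is \emph{not} in $\mathcal{Q}_k$; second, that every proper minor of $\rho_A$ \emph{is} in $\mathcal{Q}_k$; and third, that these are the \emph{only} excluded minors. Throughout I will use the reconstruction formula \eqref{eq:reconr}: a $k$-polymatroid $\rho$ lies in $\mathcal{Q}_k$ if and only if the functions $r_{M_i}$ defined recursively by \eqref{eq:reconr} are all matroid rank functions and sum to $\rho$ — equivalently, since submodularity of each $r_{M_i}$ is what can fail, the condition is that each $r_{M_i}$ obtained this way is submodular (the sum automatically telescopes back to $\rho$ provided $\rho(X\cup y)-\rho(X)\le k$ always, which holds as $\rho$ is a $k$-polymatroid). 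On a two-element set $\{e,f\}$ this is a finite check: $r_{M_i}$ is determined by which of the four rank increases along the edges of the Boolean lattice (see Figure \ref{fig:rankincr}(a)) meet the threshold $i$, and submodularity of $r_{M_i}$ on $\{e,f\}$ is the single inequality $r_{M_i}(e)+r_{M_i}(f)\ge r_{M_i}(\{e,f\})+r_{M_i}(\emptyset)$.

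First I would check that $\rho_A$ is a $k$-polymatroid: it is normalized and non-decreasing since $a<b<c$ gives $0\le b\le c\le a+c$, and it is a $k$-polymatroid since $c\le k$. Submodularity on $\{e,f\}$ reads $b+c\ge (a+c)+0$, i.e. $b\ge a$, which holds. Next, that $\rho_A\notin\mathcal{Q}_k$: applying \eqref{eq:reconr}, the rank increases of $\rho_A$ are $\rho_A(e)-\rho_A(\emptyset)=b$ (edge $b$ of the figure), $\rho_A(f)-\rho_A(\emptyset)=c$ (edge $c$), $\rho_A(\{e,f\})-\rho_A(f)=a$ (edge $a$), and $\rho_A(\{e,f\})-\rho_A(e)=a+c-b$ (the remaining edge). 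For $i$ with $a<i\le b$, the recursion gives $r_{M_i}(e)=1$ and $r_{M_i}(f)=1$ (as $b,c\ge i$) but $r_{M_i}(\{e,f\})=r_{M_i}(e)+[\rho_A(\{e,f\})\ge\rho_A(e)+i]$; since $a+c-b<i$ is \emph{not} what matters here — rather $\rho_A(\{e,f\})-\rho_A(f)=a<i$, so growing from $\{f\}$ gives $r_{M_i}(\{e,f\})=1$, while growing from $\{e\}$ gives increase $a+c-b$; for consistency \eqref{eq:reconr} must give the same value, and it does only if $[a+c-b\ge i]=[a\ge i]$. Since $a<i$, we need $a+c-b<i$; but $a+c-b$ can exceed $i$ (e.g.\ $c-b\ge 1$ and $a=i-1$ gives $a+c-b\ge i$). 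The cleaner route: compute $r_{M_i}$ directly as $r_{M_i}(X)=\#\{\text{steps along a maximal chain }\emptyset\subset\cdots\subset X\text{ with increase}\ge i\}$ is path-independent iff $\rho_A\in\mathcal{Q}_k$, and the two chains to $\{e,f\}$ give $[\rho_A(e)\ge i]+[\rho_A(\{e,f\})-\rho_A(e)\ge i]$ versus $[\rho_A(f)\ge i]+[\rho_A(\{e,f\})-\rho_A(f)\ge i]$, i.e. $[b\ge i]+[a+c-b\ge i]$ versus $[c\ge i]+[a\ge i]$. Choosing $i=b$ (which lies in $\{1,\dots,k\}$ as $0\le a<b\le c\le k$, and $b\ge 1$): the left side is $1+[a+c-b\ge b]$ and the right side is $1+[a\ge b]=1+0=1$. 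So failure occurs exactly when $a+c\ge 2b$ is possible — but we only know $a<b<c$. Hmm: take instead $i=a+1$; then left $=[b\ge a+1]+[c-b\ge 1]=1+1=2$ since $b>a$ and $c>b$, right $=[c\ge a+1]+[a\ge a+1]=1+0=1$. Thus $r_{M_{a+1}}$ assigns $\{e,f\}$ the value $2$ along one chain and $1$ along another; \eqref{eq:reconr} (growing one element at a time, starting say by adding $f$) produces a non-submodular $r_{M_{a+1}}$, so $\rho_A\notin\mathcal{Q}_k$. Here $a+1\in\{1,\dots,k\}$ since $0\le a\le k-2$.

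Next, that every proper minor of $\rho_A$ lies in $\mathcal{Q}_k$: the proper minors are polymatroids on $\le 1$ element, and any polymatroid on a set of size $\le 1$ is trivially in $\mathcal{Q}_k$ (each $M_i=U_{t,E}$ with $t\le 1$, and a chain of such is automatic). So $\rho_A$ is indeed an excluded minor. The main obstacle — and the step I expect to be hardest — is the converse: showing there are \emph{no other} excluded minors, i.e.\ that if $\rho$ is a $k$-polymatroid on $E$ with every proper minor in $\mathcal{Q}_k$ but $\rho\notin\mathcal{Q}_k$, then $|E|=2$ and $\rho=\rho_A$ for some $A$. For this I would argue that membership in $\mathcal{Q}_k$ is a "local" condition: running \eqref{eq:reconr}, the only way an $r_{M_i}$ can fail to be a matroid rank function is a failure of submodularity, and submodularity of a function built by \eqref{eq:reconr} fails iff it fails on some pair, i.e.\ there exist $X\subseteq E$ and $y,z\in E-X$ with $r_{M_i}(X\cup y)-r_{M_i}(X)=r_{M_i}(X\cup z)-r_{M_i}(X)=1$ but $r_{M_i}(X\cup\{y,z\})-r_{M_i}(X\cup y)=1$ — equivalently, reading off \eqref{eq:reconr}, certain rank-increase inequalities for $\rho$ among the four sets $X,X\cup y,X\cup z,X\cup\{y,z\}$. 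Contracting $X$ and deleting $E-(X\cup\{y,z\})$ yields a minor $\rho'$ on $\{y,z\}$ that still witnesses the failure (the rank increases are preserved under minors), so $\rho'\notin\mathcal{Q}_k$; by minor-minimality $\rho=\rho'$ has ground set $\{y,z\}$. It then remains to classify two-element $k$-polymatroids not in $\mathcal{Q}_k$: writing $b=\rho(e)$, $c=\rho(f)$ with (WLOG) $b\le c$ and $d=\rho(\{e,f\})$, the polymatroid axioms give $0\le b\le c\le d\le b+c\le 2k$ and $d\le c+ k$ wait — $d\le 2k$ and $d-c\le k$, $d-b\le k$; the failure condition from the chain argument is $[b\ge i]+[d-b\ge i]\ne[c\ge i]+[d-c\ge i]$ for some $i\in[k]$, and one checks this forces $b<c$, $b<d-b$ is not needed, and that setting $a:=d-c$ gives $0\le a<b<c$ with $d=a+c$, recovering exactly the family $\rho_A$ — while conversely if $b=c$ or if $d=b+c$ (a direct sum) no such $i$ exists. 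Counting: subsets $\{a,b,c\}$ of $\{0,1,\dots,k\}$ with $a<b<c$ number $\binom{k+1}{3}$. The delicate bookkeeping is in this last classification — pinning down exactly which triples $(b,c,d)$ obstruct and matching the parametrization to $A=\{a,b,c\}$ — and in confirming the failure of submodularity in \eqref{eq:reconr} localizes to a pair without loss; I would handle the latter by noting the recursion adds one element at a time, so the first pair $(X, \{y,z\})$ at which $r_{M_i}$ first violates $r_{M_i}(X\cup y)+r_{M_i}(X\cup z)\ge r_{M_i}(X)+r_{M_i}(X\cup\{y,z\})$ gives the desired minor directly via Lemma \ref{lem:equivquo}(3) applied to the would-be quotient pair $(M_{i},M_{i-1})$.
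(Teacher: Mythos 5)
Your overall strategy is the paper's: use the recurrence (\ref{eq:reconr}), show that any failure localizes to a square $X,\ X\cup y,\ X\cup z,\ X\cup\{y,z\}$ whose contraction/deletion is a two-element minor, and classify the two-element obstructions. The first half is sound: your witness $i=a+1$ (the paper uses $i=c$) correctly shows the two chains up to $\{e,f\}$ give different values, so $\rho_A\notin\mathcal{Q}_k$, and every proper minor of $\rho_A$ lives on at most one element. The localization to a square and the classification of bad two-element polymatroids also match the paper's computation (the two chains give different multisets of increments only if, up to swapping $e$ and $f$, $a<b<c$, with $a<b$ coming from submodularity).

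The gap is in what you take ``failure'' to mean. You assert that ``submodularity of each $r_{M_i}$ is what can fail,'' and later describe a supposed violation in which $r_{M_i}(X\cup y)-r_{M_i}(X)=r_{M_i}(X\cup z)-r_{M_i}(X)=1$ and $r_{M_i}(X\cup\{y,z\})-r_{M_i}(X\cup y)=1$; that configuration satisfies submodularity with equality and violates nothing. What can actually fail is the \emph{well-definedness} of the recurrence, i.e., path-independence of the chain count, which is the criterion you end up using in practice. But then your converse rests on the unproved assertion that path-independence implies $\rho\in\mathcal{Q}_k$: you must still check that the (now well-defined) functions $r_{M_i}$ are matroid rank functions and that each $M_{i+1}$ is a quotient of $M_i$. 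The paper supplies exactly this: unit increase is built into (\ref{eq:reconr}); for local submodularity, $r_{M_i}(X\cup f)=r_{M_i}(X)$ forces $\rho(X\cup f)-\rho(X)<i$, whence submodularity of $\rho$ gives $\rho(X\cup\{e,f\})-\rho(X\cup e)<i$ and so $r_{M_i}(X\cup\{e,f\})=r_{M_i}(X\cup e)$; and the quotient relation is immediate from (\ref{eq:reconr}) and Lemma \ref{lem:equivquo}(3). The missing piece is short, but without it you have only shown that a polymatroid with no $\rho_A$-minor admits a well-defined recurrence, not that it lies in $\mathcal{Q}_k$.
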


\begin{proof}
  It is easy to see that $\rho_A$ is a $k$-polymatroid and that any
  $k$-polymatroid on a singleton set is in $\mathcal{Q}_k$, so showing
  that $\rho_A \not \in \mathcal{Q}_k$ shows that it is an excluded
  minor for $\mathcal{Q}_k$.  To see that
  $\rho_A\notin \mathcal{Q}_k$, it suffices to show that recurrence
  (\ref{eq:reconr}) gives different values for $r_{M_c}(\{e,f\})$.
  Now $r_{M_c}(f) = r_{M_c}(\emptyset) +1 = 1$, so
  $r_{M_c}(\{e,f\})=r_{M_c}(f) =1$ since $a<c$.  However,
  $r_{M_c}(\{e,f\})=r_{M_c}(e) = r_{M_c}(\emptyset) = 0$ since
  $\rho(\{e,f\}) - \rho(e) = a+c-b<c$ and $b<c$.

  Let $\rho$ be a $k$-polymatroid on $E$ that has no minor that is
  isomorphic to any $\rho_A$.  We must prove that (i) the functions
  $r_{M_i}$ defined by recurrence (\ref{eq:reconr}) are well-defined,
  (ii) each $r_{M_i}$ is the rank function of a matroid, $M_i$, and
  (iii) $M_{i+1}$ is a quotient of $M_i$.

  We show, by induction on $|Z|$, that $r_{M_i}(Z)$ is well-defined.
  This holds if $|Z|\leq 1$, so assume that $|Z|\geq 2$ and that the
  result holds for all sets with fewer elements.  Fix distinct
  elements $e,f\in Z$ and set $Z'=Z-\{e,f\}$.  We claim that taking
  $X=Z-e$ in equation (\ref{eq:reconr}) yields the same value for
  $r_{M_i}(Z)$ as taking $X=Z-f$.  Set
  $$a=\rho(Z)-\rho(Z-e), \,\,
  b=\rho(Z-f)-\rho(Z'), \,\, c=\rho(Z-e)-\rho(Z'), \,\,
  d=\rho(Z)-\rho(Z-f).$$ Thus, $a+c=b+d$.  The claim follows from
  equation (\ref{eq:reconr}) if the two chains from $Z'$ to $Z$ yield
  the same rank increases in some order, that is, if
  $\{a,c\}= \{b,d\}$. 
   We now show that the
  polymatroids $\rho_A$ account for all ways in which the equality can
  fail.  Indeed, if $\{a,c\}\ne \{b,d\}$, then, in particular,
  $b\ne c$, so we may assume, by symmetry, that $b<c$.  Since
  $b\ne a$, submodularity gives $a<b$.  Restricting to $Z$ and
  contracting $Z'$ yields $\rho_{\{a,b,c\}}$ as a minor, contrary to
  our assumption.  Thus, $\{a,c\}= \{b,d\}$, as needed.

  To verify that $r_{M_i}$ is a matroid rank function, we use the
  following axioms: (i) $r(\emptyset) =0$, (ii) if $X\subseteq E$ and
  $e\in E$, then $r(X)\leq r(X\cup e)\leq r(X)+1$, and (iii) for any
  set $X\subseteq E$ and elements $e,f\in E$, if
  $r(X\cup e) = r(X\cup f)=r(X)$, then $r(X\cup \{e,f\})=r(X)$ (local
  submodularity).  Property (i) holds by construction, and equation
  (\ref{eq:reconr}) makes property (ii) evident, so we focus on local
  submodularity.  Since $r_{M_i}(X\cup f) =r_{M_i}(X)$ we must have
  $\rho(X\cup f)-\rho(X)<i$, so submodularity gives
  $$\rho(X\cup \{e,f\})-\rho(X\cup e)\leq \rho(X\cup f)-\rho(X)<i,$$ so
  $r_{M_i}(X\cup \{e,f\})= r_{M_i}(X\cup e)=r_{M_i}(X)$.

  Finally, $M_{i+1}$ is a quotient of $M_i$ since recurrence
  (\ref{eq:reconr}) gives inequality (\ref{eq:quotientrankineq}).
\end{proof}

After switching $e$ and $f$, the $k$-dual of $\rho_{\{a,b,c\}}$ is
$\rho_{\{k-c,k-a-c+b,k-a\}}$.

Note that $\mathcal{Q}_k$ is a proper subclass of $\mathcal{Q}_{k+1}$
and that each excluded minor for $\mathcal{Q}_k$ is an excluded minor
for $\mathcal{Q}_{k+1}$.  This observation gives the next
result.

\begin{cor}\label{cor:quotientexmin}
  The excluded minors for the class
  $\mathcal{Q}=\cup_{k\geq 2}\mathcal{Q}_k$ are the polymatroids
  $\rho_A$ defined above as $A$ ranges over all $3$-element sets of
  nonnegative integers.
\end{cor}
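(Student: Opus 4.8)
The plan is to deduce the corollary directly from Theorem~\ref{thm:quotkpolyExMin} together with two easy structural facts: that $\mathcal{Q}$ is minor-closed, being a union of the minor-closed classes $\mathcal{Q}_k$, and that $\mathcal{Q}$ contains every matroid and, more generally, every polymatroid on a ground set of size at most one. For the latter, a matroid $M$ on $E$ lies in $\mathcal{Q}_2$ via the decomposition $\{M,U_{0,E}\}$, since $U_{0,E}$ is a quotient of $M$ by Lemma~\ref{lem:equivquo}(3) (the difference $r_M-0$ is non-decreasing); and a rank-$t$ polymatroid on a single element $x$ lies in $\mathcal{Q}_{\max\{2,t\}}$ via $t$ copies of $U_{1,\{x\}}$, which trivially form a chain of quotients. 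In particular, every proper minor of any $\rho_A$ is a polymatroid on a set of size at most one, so all proper minors of $\rho_A$ lie in $\mathcal{Q}$.

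Next I would show that each $\rho_A$ is an excluded minor for $\mathcal{Q}$. Fix a three-element set $A=\{a,b,c\}$ of nonnegative integers with $a<b<c$, so $c\geq 2$. If $\rho_A\in\mathcal{Q}_j$ for some $j\geq 2$, then writing $\rho_A=r_{M_1}+\cdots+r_{M_j}$ forces $c=\rho_A(f)=\sum_{s}r_{M_s}(f)\leq j$, so $j\geq c$ and hence $A\subseteq\{0,1,\ldots,j\}$; but then Theorem~\ref{thm:quotkpolyExMin}, applied with $j$ in the role of $k$, says $\rho_A$ is an excluded minor for $\mathcal{Q}_j$, contradicting $\rho_A\in\mathcal{Q}_j$. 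Hence $\rho_A\notin\mathcal{Q}_j$ for every $j\geq 2$, that is, $\rho_A\notin\mathcal{Q}$; combined with the previous paragraph, $\rho_A$ is an excluded minor for $\mathcal{Q}$.

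For the converse I would take an arbitrary excluded minor $\rho$ for $\mathcal{Q}$, on a ground set $E$. Since $\rho\notin\mathcal{Q}$ and $\mathcal{Q}$ contains every matroid, $E$ is nonempty and $k:=\max_{e\in E}\rho(e)\geq 2$; as $\rho(e)\leq k$ for every $e\in E$, we have $\rho\in\mathcal{P}_k$, and $\rho\notin\mathcal{Q}_k$ because $\mathcal{Q}_k\subseteq\mathcal{Q}$. Since $\mathcal{Q}_k$ is minor-closed within the minor-closed class $\mathcal{P}_k$ and, by Theorem~\ref{thm:quotkpolyExMin}, its excluded minors are precisely the polymatroids $\rho_A$ with $A\subseteq\{0,1,\ldots,k\}$ and $|A|=3$, the $k$-polymatroid $\rho$ has a minor isomorphic to one such $\rho_A$. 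But $\rho_A\notin\mathcal{Q}$, so by the minor-minimality of $\rho$ this minor cannot be proper, and therefore $\rho\cong\rho_A$. Together with the previous paragraph, this identifies the excluded minors of $\mathcal{Q}$ exactly as the polymatroids $\rho_A$, $A$ ranging over all three-element sets of nonnegative integers.

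I do not expect a genuine obstacle: all the substance sits in Theorem~\ref{thm:quotkpolyExMin}. The only point needing care is the bookkeeping that ties the ambient parameter $k$ to the element ranks — choosing $k=\max_{e\in E}\rho(e)$ so that a polymatroid outside $\mathcal{Q}$ already witnesses its failure inside $\mathcal{P}_k\setminus\mathcal{Q}_k$, and re-invoking the theorem at the appropriate value of $k$ in each direction — together with checking that no value of $j$ admits $\rho_A$ into $\mathcal{Q}_j$.
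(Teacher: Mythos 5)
Your proposal is correct and follows essentially the same route as the paper, which deduces the corollary from Theorem~\ref{thm:quotkpolyExMin} by observing that the classes $\mathcal{Q}_k$ are nested and that each $\rho_A$ remains an excluded minor at every level $j$ large enough for $\rho_A$ to be a $j$-polymatroid. Your version merely makes explicit the bookkeeping the paper leaves as an observation (choosing $k=\max_{e\in E}\rho(e)$ for the converse, and ruling out $\rho_A\in\mathcal{Q}_j$ for all $j$), and all of those details check out.
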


D.~Chun \cite{deb} shows that a $2$-polymatroid $\rho$ has the form
$r_{M\del e}+r_{M/e}$ for some matroid $M$ and element $e\in E(M)$ if
and only if $\rho$ has no minor isomorphic to the matroid $U_{2,2}$ or
the polymatroid $\rho_{\{0,1,2\}}$ in Figure \ref{fig:rankincr} (b).
The polymatroid $\rho_{\{0,1,2\}}$ enters since $M\del e, M/e$ is a
lift/quotient pair, and $U_{2,2}$ enters since
$r(M\del e)- r(M/e)\leq 1$.  More generally, for a fixed positive
integer $t$, the class of $2$-quotient polymatroids $r_{M_1}+r_{M_2}$
with $r(M_1)-r(M_2)\leq t$ is minor-closed.  The final result
identifies the excluded minors for arbitrary $t$.

\begin{thm}\label{thm:QL-pExMinBoundRankDifference}
  A $2$-quotient polymatroid $\rho = r_{M_1}+r_{M_2}$ satisfies
  $r(M_1)-r(M_2)\leq t$ if and only if the uniform matroid
  $U_{t+1,t+1}$ is not a minor.
\end{thm}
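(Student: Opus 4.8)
The plan is to argue both directions using the reconstruction recurrence (\ref{eq:reconr}) and the characterization of quotients in Lemma \ref{lem:equivquo}. For the easy direction, suppose $\rho=r_{M_1}+r_{M_2}$ is a $2$-quotient polymatroid with $r(M_1)-r(M_2)\leq t$, and suppose for contradiction that $U_{t+1,t+1}$ is a minor of $\rho$. Since the class is minor-closed and, by (\ref{eq:mindelcon}), minors of $\rho$ inherit a decomposition, we would get $r_{N_1}+r_{N_2}=r_{U_{t+1,t+1}}$ with $N_2$ a quotient of $N_1$ on a $(t+1)$-element set. But $r_{U_{t+1,t+1}}(X)=|X|$, so $r_{N_1}(X)+r_{N_2}(X)=|X|$ forces every element to be a coloop of at least one of $N_1,N_2$ and never a loop-free dependent combination; working out the possibilities on the ground set, the only way two matroid rank functions sum pointwise to $|X|$ with $N_2\leqslant N_1$ (quotient) and $N_1$ on $t+1$ elements is to have $r(N_1)+r(N_2)=t+1$ with $r(N_1)\geq r(N_2)$, forcing $r(N_1)-r(N_2)\geq 1$ when $t+1$ is odd and $\geq 0$ in general — more to the point, one checks $r(N_1)-r(N_2)\geq t+1-2r(N_2)\geq \dots$; the clean statement is that $r(N_1)-r(N_2)=t+1-2r(N_2)$ cannot be made $\leq t$ while keeping $r(N_2)\geq 0$ unless $r(N_2)\geq 1$, and then local analysis of the quotient condition on $U_{t+1,t+1}$ shows $N_2$ must be free too, giving $r(N_1)=r(N_2)=t+1$ and $\rho\neq U_{t+1,t+1}$. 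The cleanest route is: show directly that no $2$-quotient decomposition of $U_{t+1,t+1}$ exists with $r(M_1)-r(M_2)\leq t$, by observing $r_{M_1}(E)+r_{M_2}(E)=t+1$ and the quotient condition $r_{M_1}\geq r_{M_2}$ pointwise forces $M_1$ to have no loops (else $r_{M_1}+r_{M_2}<|X|$ on a singleton), hence $r(M_1)=t+1$, hence $r(M_2)=0$, hence $r(M_1)-r(M_2)=t+1>t$.

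For the converse, assume $U_{t+1,t+1}$ is not a minor of $\rho$; I want to show $r(M_1)-r(M_2)\leq t$, where $M_1,M_2$ are the canonical quotient pair reconstructed via (\ref{eq:reconr}) (these exist by Theorem \ref{thm:quotkpolyExMin}, since having no $U_{t+1,t+1}$ minor in particular rules out nothing about $\mathcal{Q}_2$-membership — wait, I need $\rho\in\mathcal{Q}_2$ first). So the real first step is to note that the hypothesis does not by itself guarantee $\rho\in\mathcal{Q}_2$; I should state the theorem as being about $2$-quotient polymatroids $\rho=r_{M_1}+r_{M_2}$ (as the statement does), so $\rho\in\mathcal{Q}_2$ is given, and $\{M_1,M_2\}$ is the unique quotient decomposition. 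Now suppose $r(M_1)-r(M_2)=m\geq t+1$. By Lemma \ref{lem:equivquo}(3), the difference $r_{M_1}-r_{M_2}$ is non-decreasing, starts at $0$ on $\emptyset$, increases by at most $1$ when an element is added, and reaches value $m$ on $E$. Pick a chain $\emptyset=X_0\subsetneq X_1\subsetneq\cdots\subsetneq X_n=E$ along which $r_{M_1}-r_{M_2}$ strictly increases at exactly $m$ steps; restrict to the union and contract appropriately to isolate $t+1$ of these "jump" elements. Concretely: choose elements $e_1,\dots,e_{t+1}$ and a set $B$ with $B\cup\{e_1,\dots,e_{t+1}\}$ ordered so each $e_j$ contributes a jump to $r_{M_1}-r_{M_2}$; then the minor obtained by contracting $B$ and deleting everything outside $B\cup\{e_1,\dots,e_{t+1}\}$ should be $U_{t+1,t+1}$, because in that minor each $e_j$ increases $r_{M_1}$ but not $r_{M_2}$ — hmm, that gives $r(M_2\text{-minor})=0$, not quite $U_{t+1,t+1}$, which needs $r_{M_1}$-minor $=t+1$ and $r_{M_2}$-minor $=0$ on a $(t+1)$-set, i.e. the minor polymatroid is $X\mapsto|X|$. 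That is exactly $U_{t+1,t+1}=r_{U_{t+1,t+1}}+r_{U_{0,t+1}}$. So the task reduces to: from a chain realizing $m\geq t+1$ jumps, extract $t+1$ elements on which $r_{M_1}$ is "fully independent" and $r_{M_2}$ is "fully loopy" in the relevant minor.

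The main obstacle, and the step I would spend the most care on, is this extraction: a single jump of $r_{M_1}-r_{M_2}$ at step $X_{j-1}\to X_j=X_{j-1}\cup e_j$ means $r_{M_1}$ went up and $r_{M_2}$ did not, but after contracting other elements, $r_{M_2}$ could become a coloop at $e_j$ or $r_{M_1}$ could become a loop, destroying the pattern. The fix is to choose the chain and the jump-elements greedily from the top: let $m=r(M_1)-r(M_2)$ and take a basis $B_1$ of $M_1$; since $r_{M_2}(E)=r_{M_1}(E)-m$, at least $m$ elements of $B_1$ can be ordered so that adding them to a suitable initial segment does not raise $r_{M_2}$. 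Formally, build the chain so that $M_2$-rank is maximized as early as possible (first add a basis of $M_2$, extend to a basis of $M_1$ adding $m$ more elements $e_1,\dots,e_m$, then add the rest); along $e_1,\dots,e_m$ the $M_1$-rank strictly increases (they're independent in $M_1$ over the $M_2$-basis) while $M_2$-rank is already maximal so cannot increase. Take $B=$ (an $M_2$-basis), $A=E-(B\cup\{e_1,\dots,e_{t+1}\})$, and consider $(\rho_{\del A})_{/B}$: I claim it equals $X\mapsto |X|$ on $\{e_1,\dots,e_{t+1}\}$. Indeed $(M_1\del A)/B$ has $\{e_1,\dots,e_{t+1}\}$ independent (contracting a basis of $M_2\subseteq M_1$... need $B$ independent in $M_1$, true since $M_2$ is a quotient so every independent set of $M_2$ is independent in $M_1$ — yes, $r_{M_1}\geq r_{M_2}$ and equality of $\emptyset$, plus quotient means $M_1$-independence is implied by... actually a basis of $M_2$ need not be $M_1$-independent in general, but $r_{M_2}(B)=|B|\leq r_{M_1}(B)\leq|B|$ forces it). And $(M_2\del A)/B=U_{0,\{e_1,\dots,e_{t+1}\}}$ since $B$ spans $M_2$. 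Hence the minor is $U_{t+1,t+1}$, contradicting the hypothesis; therefore $m\leq t$, completing the proof.
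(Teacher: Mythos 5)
Your argument is correct and is essentially the paper's own proof: for one direction you show the unique quotient decomposition of $U_{t+1,t+1}$ is the pair $U_{t+1,t+1}$, $U_{0,t+1}$ with rank difference $t+1$ (combined with the minor-closedness noted before the theorem), and for the other you take a basis $B$ of $M_2$ (independent in $M_1$ since $r_{M_1}\geq r_{M_2}$ pointwise), extend it to a basis of $M_1$, and contract $B$ while deleting the complement to produce a free matroid on at least $t+1$ elements. The exploratory detours in your write-up can be cut, but the final argument you settle on matches the paper's.
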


\begin{proof}
  While $U_{t+1,t+1}$ is a $2$-quotient polymatroid, it arises only
  from the lift/quotient pair $U_{t+1,t+1}$ and $U_{0,t+1}$, which
  fails the inequality $r(M_1)-r(M_2)\leq t$, while its proper minors
  satisfy that inequality.  Thus, it suffices to show that if, for a
  $2$-quotient polymatroid $\rho = r_{M_1}+r_{M_2}$, we have
  $r(M_1)-r(M_2)>t$, then $U_{t+1,t+1}$ is a minor of $\rho$.  To see
  this, take a basis $B$ of $M_2$, so $B$ is independent in $M_1$;
  extend $B$ to a basis $B'$ of $M_1$; in $\rho$, contracting $B$ and
  deleting $E-B'$ yields the free matroid on $t+1$ or more elements.
\end{proof}


\begin{thebibliography}{99}

\bibitem{Brylawski} T.H.\ Brylawski, Constructions, in: \emph{Theory of
    Matroids}, N.~White ed.\ (Cambridge Univ. Press, Cambridge, 1986)
  127--223.

\bibitem{deb} D.~Chun, Deletion-contraction to form a polymatroid,
  \emph{Discrete Math.}\ \textbf{309} (2009) 2592--2595.

\bibitem{GWRota} J.~Geelen and G.~Whittle, Branch-width and Rota's
  conjecture, \emph{J.\ Combin.\ Theory Ser.\ B} \textbf{86} (2002)
  315--330.

\bibitem{helgason} T.~Helgason, Aspects of the theory of
  hypermatroids, in: \emph{Hypergraph Seminar} (Lecture Notes in
  Math., Vol.\ 411, Springer, Berlin, 1974) 191--213.

\bibitem{HerzogHibi} J.~Herzog and T.~Hibi, Discrete polymatroids,
  \emph{J.\ Algebraic Combin.}\ \textbf{16} (2002) 239--268.

\bibitem{manoel1} M.~Lemos, On the connectivity function of a binary
  matroid, \emph{J.\ Combin.\ Theory Ser.\ B} \textbf{86} (2002)
  114--132.
  
\bibitem{manoel} M.~Lemos, Uniqueness of the decomposition of the
  rank function of a $2$-polymatroid, \emph{Discrete Math.}\
  \textbf{269} (2003) 161--179.

\bibitem{LemosMota} M.~Lemos and S.~Mota, The reconstruction of a
  matroid from its connectivity function, \emph{Discrete Math.}\
  \textbf{220} (2000) 131--143.

\bibitem{Matus} F.~Mat\'u\v s, Excluded minors for Boolean
  polymatroids, \emph{Discrete Math.}\ \textbf{235} (2001) 317--321.

\bibitem{murtysimon} U.S.R.\ Murty and I.~Simon, A $\beta$-function
  that is not a sum of rank functions of matroids, in:
  \emph{Probl\`emes combinatoires et th\'eorie des graphes (Colloq.\
    Internat.\ {CNRS}, Univ.\ Orsay, Orsay, 1976)}, (CNRS, Paris,
  1978) 305--306.

\bibitem{oxley} J.~Oxley, \emph{Matroid Theory}, second edition
  (Oxford University Press, Oxford, 2011).

\bibitem{exminbpm} J.~Oxley and G.~Whittle, Some excluded-minor
  theorems for a class of polymatroids, \emph{Combinatorica}
  \textbf{13} (1993) 467--476.
  
\end{thebibliography}
\end{document}